\documentclass[11pt]{amsart}
\usepackage{xcolor}
\usepackage{float}
\usepackage[T1]{fontenc}
\usepackage[utf8]{inputenc}
\usepackage{lmodern}
\usepackage{amsmath,amssymb,amsthm,mathtools}
\usepackage{multirow}
\usepackage{booktabs}
\usepackage{microtype}
\usepackage[a4paper,margin=1in]{geometry}
\usepackage[hidelinks]{hyperref}
\usepackage{silence}
\newtheorem{theorem}{Theorem}[section]
\newtheorem{lemma}[theorem]{Lemma}
\newtheorem {proposition}[theorem]{Proposition}
\newtheorem {definition}[theorem]{Definition}
\newtheorem {example}[theorem]{Example}
\newtheorem {question}[theorem]{Question}

\newtheorem{remark}[theorem]{Remark}

\newcommand{\A}{\mathbb{A}}

\newcommand{\Spin}{\operatorname{Spin}}
\newcommand{\SO}{\operatorname{SO}}

\newtheorem {corollary} [theorem] {Corollary}

\newtheorem{mainthm}{Theorem}

\title{Profinite Non-Rigidity of Arithmetic Lattices and the K\"ahler Property}

\date{}
\author{Yukun Du}
\address{Department of Mathematics, University of Georgia, Athens, GA 30603}
 \email{yukun.du@uga.edu }
\urladdr{https://benniedu.github.io/}

\author{Feng Hao}
\address{School of Mathematics, Shandong University,
Jinan, 250100, P. R. China.}
\email{feng.hao@sdu.edu.cn}

\author{Kejia Zhu}
\address{School of Mathematics, Hunan University, Changsha, Hunan, 410082, P.R.China}
 \email{kzhumath@gmail.com}
\urladdr{https://sites.google.com/view/kejiazhu} 

\begin{document}

\begin{abstract}
We study the profinite non-rigidity of arithmetic lattices and its implications for the K\"ahler property. In the first part, we characterize the absolute Dynkin types that admit non-isomorphic real forms of higher-rank Lie groups containing torsion-free arithmetic lattices with isomorphic profinite completions. In the second part, as a geometric application, we answer a question asked independently by Arapura and Libgober, by showing that the K\"ahlerness of finitely presented, residually finite groups is not determined by their profinite completions.
\end{abstract}

\maketitle

\section{Introduction}
In 1959, Grothendieck \cite{grothendieck5geometrie} first introduced the algebraic fundamental group to study the fundamental group of varieties and schemes. For a complex algebraic variety $X$, its algebraic fundamental group $\pi_1^{alg}(X)$ is indeed the profinite completion of $\pi_1(X)$, i.e. $$\pi_1^{alg}(X)\cong \widehat{\pi_1(X)}:=\varprojlim_{\substack{N\trianglelefteq_f \pi_1(X)}} \pi_1(X)/N.$$  
Although the profinite completion of a group records all of its finite quotients, the passage from a discrete group to its profinite completion generally loses information. In the geometric setting, it is natural to ask to what extent $\pi_1(X)$ can be reconstructed from
$\widehat{\pi_1(X)}$. 

In 1970, Grothendieck \cite{grothendieck1970representations}
posed the following famous question: Let $f:G_1\to G_2$ be a homomorphism between finitely presented, residually finite groups. If the induced continuous homomorphism $\widehat f:\widehat{G}_1\to \widehat{G}_2$ is an isomorphism, then must $f$ itself be an isomorphism?

Grothendieck's question was answered in the negative by Bridson and Grunewald \cite{bridson2004grothendieck}. 
Moreover, Serre \cite{serre1964exemples} constructed conjugate smooth complex projective varieties $X$ and $X^\sigma$ such that $\widehat{\pi_1(X)}\cong \widehat{\pi_1(X^\sigma)}$ but ${\pi_1(X)}\not\cong {\pi_1(X^\sigma)}$. These examples lead to the following question in a broader scope:

\begin{question}\label{Que2}
To what extent is a finitely presented, residually finite group determined, up to isomorphism, by its profinite completion?
\end{question}

To study this question in a more solid setting, we restrict to the arithmetic lattices in higher-rank Lie groups. By the work of Kammeyer and Kionke (see \cite{kammeyer2021adelic},\cite{kammeyer2023profinite}), the ambient real form is an invariant of the abstract lattice, whereas profinitely isomorphic arithmetic lattices have the same absolute Dynkin type. This raises a natural question concerning the extent to which the real form may vary within a fixed absolute Dynkin type while preserving the profinite completion of an arithmetic lattice. In this paper, we first answer this question in terms of Dynkin types in Theorem \ref{Mainthem}, which summarizes the results of Theorem \ref{thm:rigidity}, Remark \ref{remk:rigidity}, and Proposition \ref{Pro:cocompt}.

\begin{mainthm}\label{Mainthem}
Let $\Phi$ be one of the following irreducible Dynkin types: $A_\ell\;(\ell\ge6),B_\ell\;(\ell\ge4), C_\ell\;(\ell\ge6), D_\ell\;(\ell\ge4), E_6, E_7, E_8$. There exist non-isomorphic higher-rank simply connected real forms $G$ and $G'$ of absolute type $\Phi$, admitting torsion-free arithmetic lattices $\Gamma<G, \Gamma'<G'$ such that $\widehat{\Gamma}\cong\widehat{\Gamma'}$. In particular, when $\Phi\ne E_6$, the lattices can be chosen to be cocompact. Moreover, no other irreducible Dynkin type admits such groups and lattices.
\end{mainthm}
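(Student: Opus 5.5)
The plan has three parts: reduce profinite isomorphism to adelic data, construct examples type by type, and rule out the remaining types. For the first part I use the Kammeyer--Kionke framework. For principal arithmetic subgroups, and after passing to torsion-free finite-index subgroups, in groups satisfying the congruence subgroup property (which holds in higher rank), two arithmetic lattices $\Gamma<\mathbf{G}(k)$ and $\Gamma'<\mathbf{G}'(k)$ have isomorphic profinite completions as soon as their closures in the finite-adelic points coincide. So it suffices to find two $k$-forms $\mathbf{G},\mathbf{G}'$, over the same number field $k$ and of the same absolute type $\Phi$, with the following properties.
\begin{itemize}
\item $\mathbf{G}(\mathbb{A}_f)\cong\mathbf{G}'(\mathbb{A}_f)$, i.e.\ $\mathbf{G}_v\cong\mathbf{G}'_v$ at every finite place $v$.
\item The products of the archimedean groups, after removing compact factors, are non-isomorphic higher-rank real groups.
\end{itemize}
Since the congruence kernel is finite and central, I expect to need it trivial, or at least equal for both forms. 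This is automatic when the metaplectic kernel vanishes, e.g.\ for anisotropic/cocompact cases over $\mathbb{Q}$ with suitable local conditions. Otherwise I would check it via Prasad--Rapinchuk.

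For the second part, existence, I use the Hasse principle / local-global classification of forms, in Borel--Harder / Prasad--Rapinchuk form. Given prescribed local forms at all places of $k$, a global form with those localizations exists provided the local Galois-cohomology classes satisfy the sum (reciprocity) constraint in $H^1$ with values in the centre or fundamental group. I take $k=\mathbb{Q}$, or a real quadratic field if necessary, and choose two archimedean real forms $G_\infty\not\cong G'_\infty$ of type $\Phi$, both of real rank at least $2$. They must have the same image under the archimedean part of the obstruction map, e.g.\ the same Tits/Rost invariant or discriminant-type invariant at infinity. Then I fix identical local forms at all finite places. The type-by-type work goes as follows.
\begin{itemize}
\item For $B_\ell$ and $D_\ell$ I use quadratic forms of signatures $(p,q)$ and $(p',q')$ with $p-q\equiv p'-q' \pmod 8$, with equal discriminants and Hasse invariants arranged.
\item For $A_\ell$ I use unitary groups $\mathrm{SU}(p,q)$ over a CM extension, with matched local invariants, or inner forms from division algebras.
\item For $C_\ell$ I use $\mathrm{Sp}$ together with quaternionic forms $\mathrm{Sp}(p,q)$.
\item For $E_6,E_7,E_8$ I use real forms with equal Tits indices modulo the relevant invariants. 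For $E_8$, where the fundamental group is trivial, any two real forms of rank at least $2$, e.g.\ split and $E_{8(-24)}$, work, and both occur globally since $H^1(k,E_8)\to\prod_{v\ \mathrm{real}}H^1(k_v,E_8)$ is bijective.
\end{itemize}
The numeric lower bounds on $\ell$ come from needing two distinct admissible signatures with the same invariant and real rank at least $2$. For cocompactness I add an extra real place or a finite place where the form is anisotropic. This is impossible for $E_6$, because anisotropic $E_6$ forms at finite places don't exist and the constraints force isotropy.

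The third part, non-existence for the remaining types, is the step I expect to be the main obstacle. These are $A_\ell$ ($\ell\le5$), $B_2,B_3$, $C_\ell$ ($\ell\le5$), $G_2$ and $F_4$. For each, I would enumerate the real forms of real rank at least $2$ and show that any two of them are separated by an invariant that is determined by the finite-adelic data through the reciprocity law. For $G_2$ and $F_4$ there is essentially only one real form of rank at least $2$, so this case is immediate. The obstacle lies in the small classical ranks, where one needs careful bookkeeping of signatures modulo $8$, or modulo $4$ for the unitary/quaternionic cases, of Hermitian forms. The aim is to show that no two distinct higher-rank signatures share the archimedean invariant forced by equal local data elsewhere. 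I would first try a uniform argument via the Rost invariant or the Tits class restricted to real places, then handle leftover coincidences by hand.
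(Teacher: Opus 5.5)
Your overall architecture matches the paper's. Local isomorphism at every finite place plus CSP yields profinitely isomorphic torsion-free lattices (Lemma~\ref{lem:CSP}). You do not need the congruence kernel to be trivial or equal for the two forms: Theorem~\ref{CSP} already gives an open subgroup of $\widehat\Delta$ isomorphic to a compact open subgroup of $\mathbf G(\mathbb A_k^f)$, and Proposition~\ref{propo:exact} finishes.

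\textbf{The $D_4$ case fails.} Your existence recipe for types $B,D$ only uses quadratic forms that are \emph{isometric} at all finite places, and this produces nothing in $D_4$. The only admissible pair there is $\mathrm{Spin}(2,6)$ versus $\mathrm{Spin}(4,4)$: note $\mathrm{Spin}^*(8)\cong\mathrm{Spin}(2,6)$, and $\mathrm{Spin}(3,5)$ is excluded by Proposition~\ref{inner_twist}. For $d=8$, every definite real form has positive discriminant and Hasse invariant $+1$. So isometry at all finite places plus Hilbert reciprocity forces equal Hasse invariants at the distinguished place. But signature $(2,6)$ gives $(-1)^{15}=-1$ while $(4,4)$ gives $+1$. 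Equivalently, $p-q\equiv 4$ versus $0\pmod 8$, so your own congruence rules it out, and your recipe reaches $D_\ell$ only for $\ell\ge5$.

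The missing idea is that $\mathrm{Spin}(q)$ depends only on the similarity class of $q$, and the similarity factor may be chosen \emph{locally}, differently at different places. For even $d$, the replacement $q_v\mapsto\lambda q_v$ keeps the discriminant and multiplies the Hasse invariant by $(\lambda,(-1)^{d/2}\det q)_v$. Proposition~\ref{prop:spin_d} uses exactly this at the unique dyadic place to obtain $\mathrm{Spin}(2,6)$ versus $\mathrm{Spin}(4,4)$. It inserts a totally positive $\beta$ that is a non-square at $v_2$, so that a suitable $\lambda$ exists.

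\textbf{The non-existence half is only outlined, and two ingredients are missing.} First, your Part~1 goes from adelic data to profinite completions; any reciprocity argument needs the converse. That converse is Kammeyer--Kionke adelic superrigidity (Theorem~\ref{thm:adelic}). It only gives $\mathbf G\times_k\mathbb A_k^f\cong\mathbf H\times_l\mathbb A_l^f$ over an isomorphism $\mathbb A_l^f\cong\mathbb A_k^f$, so the two lattices may be arithmetic over non-isomorphic fields. Reciprocity must then be run in $k$ and in $l$ separately, using $[k:\mathbb Q]=[l:\mathbb Q]$ and that all non-distinguished archimedean factors are the same compact form, as in Lemma~\ref{lem:rigidity_brauer}.

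Second, Tits classes do not detect the inner/outer distinction. $\mathrm{SL}(4,\mathbb R)$, $\mathrm{SU}(2,2)$, $\mathrm{SL}(5,\mathbb R)$ and $\mathrm{SU}(2,3)$ are all quasi-split over $\mathbb R$, so their Tits classes vanish at $v$. These pairs are separated instead by reciprocity for the quadratic splitting field of the quasi-split inner form, which is Proposition~\ref{inner_twist}.

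For ${}^2A_5$ the Tits class does work, but only if you use the $k$-rational order-two character of the centre (the discriminant algebra). The Tits algebra over $E$ is useless here: its contributions at $v$ vanish ($E_v=\mathbb C$, for $\mathrm{SU}(2,4)$ versus $\mathrm{SU}(3,3)$) or cancel as $\tfrac12+\tfrac12$ (for $\mathrm{SL}(6,\mathbb R)$ versus $\mathrm{SL}(3,\mathbb H)$). The paper bypasses this with $\dim X\equiv\dim X'\pmod 4$ (Proposition~\ref{prop:symm_space}). The Rost invariant gives nothing, since $H^3(k,\mathbb Q/\mathbb Z(2))\cong\bigoplus_{v\ \mathrm{real}}H^3(k_v,\mathbb Q/\mathbb Z(2))$ carries no reciprocity constraint.

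\textbf{Your explanation of the $E_6$ exception is wrong.} Nothing forces isotropy: one can put a compact $E_{6(-78)}$ factor at a second real place, as in Proposition~\ref{Pro:cocompt}. The real obstruction is that CSP for anisotropic groups of type $E_6$ is open. In the same vein, CSP is not known ``in higher rank'' in general.
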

Theorem~\ref{Mainthem} can be used to study the profinite invariance of properties related to the ambient real form or the geometry of its symmetric space. As a geometric application, the second part of this paper concerns the recognition of K\"ahler groups from their profinite completions. 
Recall in Serre's example above, $X$ is a smooth projective variety over $\mathbb C$, so $\pi_1(X)$ is a K\"ahler group, namely the fundamental group of a compact K\"ahler manifold. Thus, Serre's example shows that the profinite completion does not determine a K\"ahler group up to isomorphism, even among K\"ahler groups. Nevertheless, it leaves open a more geometric recognition problem. Indeed, the K\"ahler property imposes strong restrictions not only on a group itself, but also on its finite-index subgroups. Since the profinite completion records its system of finite-index subgroups of a group, it leads to the following fundamental question asked independently by Arapura and Libgober:

\begin{question}\label{question}
Is the K\"ahlerness of a residually finite group determined by its profinite completion?
\end{question}
In \cite{hughes2025profinite}, Hughes, Llosa-Isenrich, Py, Stover, and Vidussi showed that 
certain groups are determined within the
class of residually finite K\"ahler groups by their profinite completion, including products of surface groups and certain groups with exotic finiteness. 

As an application of the Theorem \ref{Mainthem}, in the following theorem we answer the Question \ref{question} negatively (see Theorem \ref{mainThem2} for the specfic construction):

\begin{mainthm}\label{thm:main}
There exist infinitely many pairs of finitely presented, residually finite groups $(G_1,G_2)$ such that
$$
\widehat{G}_1\cong\widehat{G}_2,
$$
where $G_1$ is a K\"ahler group and $G_2$ is not a K\"ahler group.
\end{mainthm}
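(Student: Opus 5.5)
The plan is to apply Theorem~\ref{Mainthem} to a Dynkin type in which one real form is of Hermitian type and another, with the same absolute type, is not. Concretely, I would work in type $D_\ell$ or $B_\ell$. There $\Spin(2,n)$ is Hermitian, since its symmetric space $\SO(2,n)/(\SO(2)\times\SO(n))$ is a bounded symmetric domain. By contrast, $\Spin(p,q)$ with $p,q\ge3$ is not Hermitian, because its maximal compact subgroup has no central circle factor.

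The first step is to extract from the proof of Theorem~\ref{Mainthem} a pair of torsion-free cocompact arithmetic lattices $\Gamma<\Spin(2,n)$ and $\Gamma'<\Spin(p,q)$ with $p+q=n+2$, $p,q\ge 3$, and $\widehat\Gamma\cong\widehat{\Gamma'}$. These would come from two admissible quadratic forms over a totally real field with the same local invariants at all finite places, so that the adelic groups agree. Using Proposition~\ref{Pro:cocompt}, I would arrange the construction so that the non-compact archimedean factor has signature $(2,n)$ for one form and $(p,q)$ for the other. The compact factors at the remaining archimedean places do not affect the ambient Lie group. To get infinitely many pairs, I would vary $n$, or pass to the congruence subgroups of $\Gamma$ and $\Gamma'$ that correspond under the isomorphism of profinite completions. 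Corresponding finite-index subgroups again have isomorphic profinite completions and remain non-isomorphic, since they lie in different ambient groups.

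The second step sets $G_1=\Gamma$. This group is finitely presented and residually finite, being an arithmetic lattice. It is also K\"ahler: $\Gamma\backslash \Spin(2,n)/K$ is a compact locally Hermitian symmetric manifold, hence projective by Kodaira embedding via the Bergman metric or the canonical bundle, so $\Gamma$ is its fundamental group. The third step is to show that $G_2=\Gamma'$ is not K\"ahler. I would invoke the Carlson--Toledo theorem, in the strengthened form due to Simpson and Corlette with superrigidity: a cocompact lattice in a simple real group of real rank at least $2$ is K\"ahler only if the associated symmetric space is Hermitian. The argument runs as follows. If $\Gamma'=\pi_1(X)$ with $X$ compact K\"ahler, Corlette's harmonic map theorem gives a $\Gamma'$-equivariant harmonic map from $\widetilde X$ to $\Spin(p,q)/K$. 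By Siu--Sampson rigidity this map is pluriharmonic and its image has the corresponding Hodge-theoretic structure. Since $\Gamma'$ is Zariski dense, the representation would be a complex variation of Hodge structure, which forces the real group to be of Hodge type. For a non-Hermitian group we still need to exclude the possibility that the group is of non-Hermitian Hodge type, and $\Spin(p,q)$ with $p,q$ both at least $3$ can be of Hodge type when $p$ is even. Carlson--Toledo settles this: a pluriharmonic map into a non-Hermitian symmetric space of this kind has rank at most $1$, so it factors through a curve. That contradicts the Zariski density and the rank-$\ge2$ superrigidity of $\Gamma'$.

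I expect the non-K\"ahlerness of $\Gamma'$ to be the main obstacle. The non-Hermitian real forms that Theorem~\ref{Mainthem} pairs with Hermitian ones may still be of Hodge type, for example $\SO(4,q)$, so the simple Hodge-type obstruction does not apply and I need the finer Carlson--Toledo rank-one factorization argument. If that step is delicate, a fallback is to choose the signature so that $\Spin(p,q)$ is not of Hodge type at all, which happens when $p$ and $q$ are both odd, since then the rank of $K$ is less than the rank of $G$. For such a group every reductive representation of a K\"ahler group into it that has Zariski dense image is excluded by Simpson's theorem. This would require a $D_\ell$ pair with signatures $(2,n)$ and $(p,q)$, $p,q$ odd and $p+q=n+2$, which matches the parity constraints from the local invariants in the proof of Theorem~\ref{Mainthem}.
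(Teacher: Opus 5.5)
Your Steps 1 and 2 are fine. The paper does contain single-group pairs of the kind you want. One is $\Spin(2,7)$ and $\Spin(6,3)$, from Theorem~\ref{thm:non_rigidity}(1) since $4\mid 2-6$. Another is $\Spin(2,6)$ and $\Spin(4,4)$, from Proposition~\ref{prop:spin_d}. (Proposition~\ref{Pro:cocompt} is the $E_6$ statement, not the relevant input.) Compact quotients on the Hermitian side are K\"ahler.

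\textbf{The gap is Step 3.} Your claim that a pluriharmonic map into the symmetric space of $\Spin(p,q)$, $p,q\ge3$, has rank at most $1$ and factors through a curve is false. The Carlson--Toledo bound is $\operatorname{rank}_{\mathbb R}df\le 2a$, where $a$ is the maximal dimension of an abelian subspace of $\mathfrak p^{\mathbb C}$. For $\mathfrak{so}(p,q)$ one has $\mathfrak p^{\mathbb C}\cong\mathbb C^p\otimes\mathbb C^q$. For an isotropic line $\ell\subset\mathbb C^p$, the subspace $\ell\otimes\mathbb C^q$ is abelian, so $a\ge\max(p,q)\ge3$. Concretely, the totally geodesic, hence pluriharmonic, embedding of the Hermitian symmetric space of $\SO(2,q)$ into that of $\SO(p,q)$ has real rank $2q$.

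Curve factorization is special to $a=1$, essentially $\SO(n,1)$. The inequality $2\max(p,q)<pq=\dim X$ does exclude a compact K\"ahler manifold \emph{homotopy equivalent} to $\Gamma'\backslash X$. It does not exclude an arbitrary compact K\"ahler $M$ with $\pi_1(M)\cong\Gamma'$: its classifying map $M\to\Gamma'\backslash X$ need not be nonzero on $H^{pq}$.

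Every admissible partner of $\Spin(2,n)$ here is of Hodge type, so Simpson's obstruction is also unavailable. Neither tool covers cocompact lattices in Hodge-type, non-Hermitian higher-rank groups such as $\Spin(4,4)$ or $\Spin(6,3)$. Your proposal supplies no other argument, and as far as I know this case is not settled in the literature.

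\textbf{Your fallback cannot be realized.} If $p,q$ are both odd, then $p+q=n+2$ is even, so $n$ is even. Then $\Spin(2,n)$ is inner to the compact form while $\Spin(p,q)$ is outer (Table~\ref{tab:classical-real-forms}), which Proposition~\ref{inner_twist} forbids. Independently, $2n$ is even while $pq$ is odd, contradicting Proposition~\ref{prop:symm_space}. So within a single simple group, every profinite partner of a Hermitian $\Spin(2,n)$ is of Hodge type, which is exactly where the K\"ahler obstruction is missing.

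\textbf{How the paper avoids this.} It moves the non-Hermitian side to a product of rank-one groups (Theorem~\ref{mainThem2}). It pairs irreducible lattices $G_1<\Spin(2n-1,2)^r$ and $G_2<\Spin(2n,1)^r$ with $r\ge2$, via Theorem~\ref{thm:non_rigidity}(1) with $r_-=r$. The higher rank needed for CSP comes from having $r\ge2$ factors, not from the real rank of a single factor. For $G_2$, irreducibility and torsion-freeness make the projection to one factor $\SO_0(2n,1)$ injective with Zariski-dense image. Delzant--Py (Theorem~\ref{thm:DP}) then factors it through some $\pi_1^{\mathrm{orb}}(\Sigma)$, so $G_2$ embeds there and $\mathrm{cd}(G_2)\le2$. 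This contradicts $\mathrm{cd}(G_2)=2nr$. This is the curve-factorization you wanted, used in the setting $a=1$ where it actually holds.
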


\begin{remark}
The restriction to residually finite K\"ahler groups is indispensable. Without residual finiteness, one immediately encounters pathological examples. Indeed, for any K\"ahler group $G$ and any infinite simple
group $S$, a classical example due to Gromov \cite{Gromov1989SurLG} gives $\widehat{G}\cong\widehat{G*S}$, while $G*S$ is not K\"ahler. The assumption that $G'$ is finitely presented is also natural from
a geometric perspective, since every K\"ahler group is finitely presented.
\end{remark}

Note the construction in this paper does not arise from the Grothendieck pair, so it is interesting to consider the following question: 
\begin{question}
   Let $f:G_1\to G_2$ be a homomorphism between finitely presented, residually finite groups. If the induced continuous homomorphism $\widehat f:\widehat{G}_1\to \widehat{G}_2$ is an isomorphism, if $G_1$ (resp. $G_2$) is K\"ahler, then must $G_2$ (resp. $G_1$) be K\"ahler too? 
\end{question}

\noindent \textbf{Acknowledgment} Yukun Du would like to remember with gratitude his late Ph.D. advisor, Michael Kapovich. He is grateful for their many discussions on Lie groups and arithmetic lattices and dedicates his contribution to Kapovich's memory. Feng Hao would like to thank Donu Arapura for asking him Question \ref{question} and helpful discussion. He also would like to thank Claudio Llosa Isenrich for useful conversations. Kejia Zhu would like to thank Anatoly Libgober for asking him the same question and for useful conversations. He would also like to thank Shaver Phagan and Corey Bregman for useful discussions.

Feng Hao is supported by  NSFC No. 1240010723, SDNSFC (No. 2024HWYQ-009,
No. ZR2024MA007, No. tsqn202312060). Kejia Zhu is supported by NSFC of China (grant No. 12501086).\\

\section{Preliminaries}
We first list some basic definitions, notations, and facts that will be used later. Throughout the paper, $k$ denotes a number field and $\mathcal O_k$ its ring of integers. A \emph{place} of $k$ is an equivalence class of nontrivial absolute values on $k$, where two absolute values are
equivalent if they induce the same topology on $k$. The places of $k$ are divided into \emph{archimedean places} and \emph{finite places}. Write $V_\infty(k)$ and $V_f(k)$ for its sets of archimedean and finite places, respectively. For each place $v$, let $k_v$ be the completion of $k$ at $v$.
A real place corresponds to an embedding $k\hookrightarrow\mathbb R$, and a complex place to a conjugate pair of embeddings $k\hookrightarrow\mathbb C$; the corresponding completions are $\mathbb R$ and $\mathbb C$, respectively.
Finite places correspond to nonzero prime ideals of $\mathcal O_k$.
If $v$ lies above a rational prime $p$, then $k_v$ is a finite
extension of $\mathbb Q_p$.

For a linear algebraic $k$-group $\mathbf G$, write $\mathbf G_v:=\mathbf G\times_k k_v$ for its base change to $k_v$, and $\mathbf G(k_v)=\mathbf G_v(k_v)$ for its group of $k_v$-rational points, equipped with its natural topology.
We say that two algebraic $k$-groups $\mathbf G$ and $\mathbf H$
are isomorphic at $v$ if
$\mathbf G_v\cong_{k_v}\mathbf H_v$.

Using the identification
$k\otimes_{\mathbb Q}\mathbb R
 \cong\prod_{v\in V_\infty(k)}k_v$,
we have
\[
    \mathbf G(k\otimes_{\mathbb Q}\mathbb R)
    \cong
    \prod_{v\in V_\infty(k)}\mathbf G(k_v).
\]

\begin{definition}
Let $\mathbf{G}$ be a connected reductive algebraic group over a number field $k$, and let $v$ be a place of $k$. We say that $\mathbf{G}$ is \emph{$k_v$-anisotropic} at $v$ if $\operatorname{rank}_{k_v}\mathbf G=0$; otherwise, $\mathbf G$ is said to be \emph{$k_v$-isotropic} at $v$.
\end{definition}

\begin{definition}
    For a linear algebraic $k$-group $\mathbf{G}$, fix a faithful $k$-representation $\rho: \mathbf{G}\to \mathrm{GL}_N$. Define 
    \[
    \mathbf{G}(\mathcal{O}_k)\coloneqq \mathbf{G}(k)\cap \rho^{-1}(\mathrm{GL}(N,\mathcal{O}_k)).
    \]
    We say a subgroup $\Gamma<\mathbf{G}(k)$ is \emph{arithmetic} if $\Gamma$ is commensurable with $\mathbf{G}(\mathcal{O}_k)$. The defined commensurability is independent of the choice of the faithful $k$-representation $\rho$.
\end{definition}

\subsection{Congruence subgroup property and the finite adelic groups}
Now we list some results from \cite{kammeyer2023profinite} regarding the congruence subgroup property and the finite adelic groups that we will cite later in our paper. We first state a theorem originally from Borel and Harish-Chandra \cite{borel1962arithmetic} but reformulated by Kammeyer and Kionke:
\begin{theorem} \cite[Theorem 2.5(a)]{kammeyer2023profinite}\label{KK2.5a}
Let $k$ be a number field, let $H$ be a simply connected
simple linear algebraic group over $k$, and let
$\Gamma\leq H(k)$ be an arithmetic subgroup. Assume that $H_\infty:=
\prod_{v\in V_\infty(k)}H(k_v)$ is not compact, where $V_\infty(k)$ is the set of archimedean places of $k$, then $\Gamma$ is a lattice in
$H_\infty$. Moreover, if $H(k_v)$ is compact for some
archimedean place $v$, then $\Gamma$ is cocompact.
\end{theorem}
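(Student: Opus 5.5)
The statement is the Borel--Harish-Chandra theorem in the form given by Kammeyer and Kionke, together with Godement's compactness criterion. I would first reduce to a model arithmetic group. By definition, $\Gamma$ is commensurable with $H(\mathcal O_k)$. Being a lattice in $H_\infty$, and being cocompact, are both invariant under passing to finite-index subgroups and finite extensions inside $H_\infty$. So it suffices to treat $\Gamma=H(\mathcal O_k)$, viewed inside $H_\infty$ via the diagonal embedding $H(k)\hookrightarrow\prod_{v\in V_\infty(k)}H(k_v)$.

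The next step is restriction of scalars. Put $G:=\operatorname{Res}_{k/\mathbb Q}H$, a semisimple $\mathbb Q$-group. It satisfies
\[
G(\mathbb R)\cong H(k\otimes_{\mathbb Q}\mathbb R)\cong H_\infty \quad\text{and}\quad G(\mathbb Z)\text{ is commensurable with }H(\mathcal O_k).
\]
The classical Borel--Harish-Chandra theorem then applies: for a semisimple $\mathbb Q$-group, $G(\mathbb Z)$ is a lattice in $G(\mathbb R)$. This yields the first claim, that $\Gamma$ is a lattice in $H_\infty$. The noncompactness hypothesis on $H_\infty$ is only needed for $\Gamma$ to be a lattice in a noncompact group. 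If $H_\infty$ were compact, $\Gamma$ would be finite, and the statement would be vacuous or degenerate.

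For cocompactness I would use Godement's criterion: $G(\mathbb Z)\backslash G(\mathbb R)$ is compact if and only if $G$ is $\mathbb Q$-anisotropic, which is equivalent to $G(\mathbb Q)$ containing no nontrivial unipotent elements. Since $\operatorname{rank}_{\mathbb Q}\operatorname{Res}_{k/\mathbb Q}H=\operatorname{rank}_k H$, the task reduces to showing that $H$ is $k$-anisotropic.

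This is the key step, and I would argue it by contradiction. Suppose $H$ is $k$-isotropic. Then it contains a nontrivial $k$-split torus $S$, and $S$ stays split after base change to every completion $k_v$. Hence $H$ is $k_v$-isotropic at every place $v$, and in particular at every archimedean one. But a connected semisimple real group $H(\mathbb R)$ that is $\mathbb R$-isotropic has noncompact group of real points, because a split torus contributes a copy of $\mathbb R^\times$. This contradicts the assumption that $H(k_v)$ is compact for some archimedean $v$. Note that such a $v$ must be real, since $H(\mathbb C)$ is never compact for nontrivial $H$. Therefore $H$ is $k$-anisotropic, $G$ is $\mathbb Q$-anisotropic, and $\Gamma$ is cocompact.

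The main obstacle is really only citing the deep inputs, namely Borel--Harish-Chandra and Godement's criterion (Borel--Harish-Chandra, Mostow--Tamagawa), since these cannot be reproved here. Beyond that, the delicate points are the compatibility of $\mathbb Q$-rank with restriction of scalars and the identification of $H_\infty$ with $G(\mathbb R)$.
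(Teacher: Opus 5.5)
Your proposal is correct and is the standard argument behind this result. The paper gives no proof of its own: it cites the statement from Kammeyer--Kionke as a reformulation of Borel--Harish-Chandra, which rests on exactly your ingredients — restriction of scalars to $\mathbb Q$, the Borel--Harish-Chandra lattice theorem, and Godement's compactness criterion — together with your split-torus observation that a compact archimedean factor forces $H$ to be $k$-anisotropic.
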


\begin{definition}
    For a finite place $v$, let $\mathcal O_v\subset k_v$ be its valuation ring.  The ring of finite adeles is the restricted product $$\A_k^f:=\prod_{v\in V_f(k)}' k_v
 =\left\{(x_v):x_v\in k_v\text{ for every }v,
 \quad x_v\in\mathcal O_v\text{ for all but finitely many }v\right\}.$$
\end{definition}

\begin{lemma}\cite[Lemma 2.6]{kammeyer2023profinite}\label{kklem2.6}
Let $k$ be an algebraic number field and let $G$, $H$ be two
semi-simple linear algebraic groups over $k$. If $G$ and $H$ are
isomorphic at all finite places, i.e.,
$G\times_k k_v \cong H\times_k k_v$
for all  $v\in V_f(k)$, then
\[
G(\mathbb A_k^f)
\cong
H(\mathbb A_k^f)
\]
as topological groups.
\end{lemma}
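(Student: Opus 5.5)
The natural approach is to build the isomorphism of finite adelic points directly from the local isomorphisms, using the restricted product structure.

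First, realize both groups integrally. Choose faithful $k$-representations and form the smooth affine group schemes $\mathcal G$ and $\mathcal H$ over $\mathcal O_k[1/N]$ that extend $G$ and $H$, for a suitable integer $N$. Since $G$ and $H$ are semisimple, we can spread out: after enlarging $N$, both $\mathcal G$ and $\mathcal H$ are reductive (semisimple) group schemes over $\mathcal O_k[1/N]$. Then
\[
G(\mathbb A_k^f)=\prod'_{v\in V_f(k)} \bigl(G(k_v),\,\mathcal G(\mathcal O_v)\bigr),
\]
and similarly for $H$. The restricted product structure is independent of choices up to finitely many places, so only the behavior at almost all places matters.

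Second, at each finite place $v$, fix an isomorphism $\phi_v\colon G_v\to H_v$ over $k_v$. This gives a topological group isomorphism $G(k_v)\cong H(k_v)$. To get an isomorphism of restricted products, we need $\phi_v(\mathcal G(\mathcal O_v))=\mathcal H(\mathcal O_v)$ for almost all $v$, though we are allowed to modify $\phi_v$.

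The key point is that for $v\nmid N$, the groups $\mathcal G_{\mathcal O_v}$ and $\mathcal H_{\mathcal O_v}$ are reductive group schemes over the henselian local ring $\mathcal O_v$. Their generic fibers are isomorphic, so they have the same type, and they have isomorphic special fibers as well. Concretely:

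\begin{itemize}
\item Reductive group schemes over $\mathcal O_v$ are classified by $H^1(\mathcal O_v,\operatorname{Aut})$.
\item By Lang's theorem and Hensel's lemma, these forms are quasi-split, and they are determined by the unramified outer data.
\item Since $G_v\cong H_v$, the outer classes agree; an unramified quasi-split form is determined by its generic fiber.
\item Hence $\mathcal G_{\mathcal O_v}\cong\mathcal H_{\mathcal O_v}$ over $\mathcal O_v$.
\end{itemize}

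Replacing $\phi_v$ by the generic fiber of this integral isomorphism gives $\phi_v(\mathcal G(\mathcal O_v))=\mathcal H(\mathcal O_v)$ for almost all $v$. At the finitely many remaining places, we keep an arbitrary $\phi_v$. The product map $\prod_v\phi_v$ then preserves the restricted product and is a homeomorphism on the open subgroups $\prod_v\mathcal G(\mathcal O_v)$ (after adjusting finitely many factors). It is therefore an isomorphism of topological groups.

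The main obstacle is the third step: justifying that isomorphism of generic fibers forces isomorphism of the integral models at almost all places. I would first cite the Bruhat–Tits result that hyperspecial maximal compact subgroups are conjugate under the adjoint group and correspond to reductive models. Alternatively, I would cite SGA3 for the fact that for almost all $v$ both models are quasi-split, split by an unramified extension, and classified by the same Galois-cohomological datum.
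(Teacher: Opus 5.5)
Your proof is correct. The paper gives no proof of its own here: it quotes the lemma from \cite{kammeyer2023profinite}. Your overall skeleton is the natural one: restricted product, local isomorphisms $\phi_v$, correcting $\phi_v$ at almost all places so that it carries $\mathcal G(\mathcal O_v)$ onto $\mathcal H(\mathcal O_v)$, with the bad places harmless because images of compact open subgroups are compact open.

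The difference lies in the key local lemma. The standard proof handles this step through Bruhat--Tits theory rather than group-scheme classification. For almost all $v$, both $G(\mathcal O_v)$ and $H(\mathcal O_v)$ are hyperspecial (Tits). An arbitrary $k_v$-isomorphism carries $G(\mathcal O_v)$ to some hyperspecial subgroup of $H(k_v)$. Since hyperspecial subgroups are conjugate under $H^{\mathrm{ad}}(k_v)$, one fixes $\phi_v$ by composing with a $k_v$-inner automorphism.

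You instead prove the stronger fact that the two reductive $\mathcal O_v$-models are isomorphic as group schemes. Both are quasi-split over $\mathcal O_v$ (Lang on the special fiber, then lifting a Borel via smoothness of the scheme of Borel subgroups). Quasi-split forms are classified by $H^1(\mathcal O_v,\mathrm{Out})$ (SGA3, Exp.~XXIV), and the outer classes agree because they agree over $k_v$. Your route avoids buildings and yields an honest integral isomorphism, at the price of the SGA3 classification. The Bruhat--Tits route works only with $k_v$-points and compact open subgroups and never needs to compare models.

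Your bullet list is already a complete argument, so the ``main obstacle'' needs no further citation. The one step you should write out is the injectivity of $H^1(\mathcal O_v,\mathrm{Out})\to H^1(k_v,\mathrm{Out})$. It holds because $\mathrm{Out}$ is a finite constant group, so both sets are conjugacy classes of homomorphisms from $\pi_1(\operatorname{Spec}\mathcal O_v)\cong\mathrm{Gal}(k_v^{\mathrm{nr}}/k_v)$, respectively $\mathrm{Gal}(\bar k_v/k_v)$, and inflation along a surjection is injective on such classes. Your assertion that the special fibers are isomorphic is not needed.
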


In the following definitions, let $k$ be a number field and $\mathbf{G}$ be a simply connected, absolutely almost simple algebraic group $k$-group, and let $\Gamma$ be an arithmetic subgroup of $\mathbf{G}(k)$. 
\begin{definition}
    We define the \emph{profinite completion} of $\Gamma$ by
    \[
    \widehat{\Gamma}\coloneqq \varprojlim_{N\in\mathcal{N}(\Gamma)} \Gamma/N,
    \]
    where $\mathcal{N}(\Gamma)$ denotes the set of finite-index normal subgroups of $\Gamma$.

    We define the \emph{congruence completion} of $\Gamma$ by
    \[
    \overline{\Gamma}\coloneqq \varprojlim_{M\in\mathcal{C}(\Gamma)} \Gamma/M,
    \]
    where $\mathcal{C}(\Gamma)$ denotes the set of normal congruence subgroups of $\Gamma$. 
    
    The inclusion $\mathcal{C}(\Gamma)\subset\mathcal{N}(\Gamma)$ induces a canonical homomorphism $\widehat{\Gamma}\to \overline{\Gamma}$.
\end{definition}
\begin{definition}
    We say that $\Gamma$ has the \emph{congruence subgroup property} (CSP) if the canonical homomorphism,
    \[
    \widehat{\Gamma}\to \overline{\Gamma},
    \]
    from the profinite completion to the congruence completion has a finite kernel.
    
    We say that $\mathbf{G}(k)$ has the CSP if some, equivalently every, arithmetic subgroup $\Gamma<\mathbf{G}(k)$ has the congruence subgroup property.
\end{definition}

\begin{theorem}{\cite[Theorem 2.4]{kammeyer2023profinite}}\label{congruencePro}
Let $H$ be a simply connected, absolutely almost simple algebraic
group over a number field $k$. Suppose that $H$ has the absolute type:\\
$B_\ell$ with $\ell\geq 2$,
$C_\ell$ with $\ell\geq 2$,
$D_\ell$ with $\ell\geq 5$,
$E_7,E_8, F_4,G_2$, that $k$ is not totally imaginary, and
that
\[
\sum_{v\in V_\infty(k)}
\operatorname{rank}_{k_v} H\geq 2.
\]
Then $H$ has the congruence subgroup property.
\end{theorem}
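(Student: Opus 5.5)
The plan is to reduce the statement to the classical solution of the congruence subgroup problem, with $S=V_\infty(k)$. By definition, $H$ has CSP exactly when the congruence kernel
\[
C(S,H)=\ker\bigl(\widehat{H(\mathcal O_k)}\to\overline{H(\mathcal O_k)}\bigr)
\]
is finite, and this does not depend on the chosen arithmetic subgroup. I would prove finiteness in two standard steps.

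\textbf{Centrality.} First show that $C(S,H)$ is central in $\widehat{H(\mathcal O_k)}$. I would split into two cases according to the $k$-rank of $H$.
\begin{itemize}
\item If $\operatorname{rank}_k H\ge1$, centrality is Raghunathan's theorem for isotropic groups. It uses the unipotent elementary subgroups and the hypothesis $\sum_{v\in V_\infty(k)}\operatorname{rank}_{k_v}H\ge2$.
\item If $H$ is $k$-anisotropic, I would use the Margulis--Platonov property of $H(k)$: the normal subgroup structure of $H(k)$ is determined by its closure in the finite adeles. This is known for the listed types, with triality forms of $D_4$ and the inner forms of types $A$ and $E_6$ as the troublesome exceptions. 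Together with Margulis' normal subgroup theorem, which needs the total archimedean rank $\ge2$, Rapinchuk's and Raghunathan's arguments then give centrality of $C(S,H)$.
\end{itemize}
This is exactly where the restriction on absolute types enters. The excluded cases are type $A_\ell$, type $E_6$, and type $D_4$. The first two can come from division algebras or exotic $k$-forms, and for these centrality is not known. For $D_4$ the triality forms cause the same difficulty.

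\textbf{Central kernel equals metaplectic kernel.} Once $C(S,H)$ is central, a standard argument identifies it with (the Pontryagin dual of) the metaplectic kernel $M(S,H)$. The central extension $1\to C(S,H)\to\widehat{H(k)}\to\overline{H(k)}\to1$ splits over $H(k)$, so its class lies in $\ker\bigl(H^2_m(H(\mathbb A_k^{S}))\to H^2(H(k))\bigr)$. Prasad and Rapinchuk computed this group. It is trivial when $S$ contains a nonarchimedean place, or when $S$ contains a real place $v$ at which $H$ is isotropic but not "nonsplit over $\mathbb C$." In general it is a subgroup of the group $\mu(k)$ of roots of unity in $k$, and hence finite.

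The hypothesis that $k$ is not totally imaginary is used here. It guarantees a real place, and the Prasad--Rapinchuk computation then forces $M(S,H)$ to be finite; in fact it is at most of order $2$, in contrast with totally imaginary $k$. Hence $C(S,H)$ is finite, which is CSP.

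\textbf{Main obstacle.} The centrality step in the anisotropic case is the heart of the matter. In the paper this theorem is quoted from Kammeyer--Kionke, so the proof consists in assembling these references (Raghunathan, Rapinchuk, Prasad--Rapinchuk) and checking that their hypotheses match the listed types. I would verify, type by type, that every anisotropic form of $B_\ell$, $C_\ell$, $D_\ell$ ($\ell\ge5$), $E_7$, $E_8$, $F_4$ and $G_2$ is covered by the Margulis--Platonov results.
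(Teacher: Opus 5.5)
Your outline matches the route behind the Kammeyer--Kionke result, which the paper quotes without proof. That route is: centrality of the congruence kernel, by Raghunathan for $k$-isotropic $H$ and by the known anisotropic results covering exactly the listed types (those other than $A_\ell$, $D_4$, $E_6$), followed by identifying a central kernel with the dual of the metaplectic kernel $M(S,H)$, which Prasad--Rapinchuk embed in the finite group $\mu(k)$. One correction: this embedding gives finiteness for every number field, so the hypothesis that $k$ is not totally imaginary is not what makes $M(S,H)$ finite (it only cuts $\mu(k)$ down to $\{\pm1\}$), and your stated triviality criterion for $M(S,H)$ is garbled, though you never use it.
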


We next use the adelic consequence of CSP:

\begin{theorem}{\cite[Theorem 2.5 (b)]{kammeyer2023profinite}}\label{CSP}
Let $H$ be a simply connected simple algebraic group over a number field $k$, and let $\Delta\leq H(k)$ be an arithmetic subgroup.
Suppose that
\[
H_\infty:=\prod_{v\in V_\infty(k)}H(k_v)
\]
is noncompact and that $H$ has CSP. Then there exists an open subgroup $V$ of the profinite completion
$\widehat{\Delta}$ and a compact open subgroup
$K\leq H(\mathbb A_k^f)$, and a topological isomorphism $V\cong K$.
\end{theorem}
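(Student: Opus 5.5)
The plan is to factor the problem through the congruence completion: first identify $\overline{\Delta}$ with a compact open subgroup of $H(\mathbb A_k^f)$, and then use the finiteness of the congruence kernel to find an open subgroup of $\widehat{\Delta}$ that maps isomorphically onto its image.

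\textbf{Step 1: the congruence completion is the adelic closure.} Fix a faithful $k$-representation $\rho$ and regard $\Delta\le H(k)\le H(\mathbb A_k^f)$ via the diagonal embedding. The congruence subgroups of $\Delta$ are exactly the subgroups $\Delta\cap U$, where $U$ is a compact open subgroup of $H(\mathbb A_k^f)$. This holds because a basis of such $U$ is given by the principal congruence kernels $\prod_v \ker\bigl(H(\mathcal O_v)\to H(\mathcal O_v/\mathfrak p_v^{n_v})\bigr)$, with $n_v=0$ for almost all $v$. Let $\overline{\Delta}^{\mathrm{ad}}$ be the closure of $\Delta$ in $H(\mathbb A_k^f)$. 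Since $\Delta$ is commensurable with $H(\mathcal O_k)$, whose closure lies in the compact group $\prod_v H(\mathcal O_v)$, the closure $\overline{\Delta}^{\mathrm{ad}}$ is compact. The compact open subgroups $U$ intersect to the identity, so the natural map $\overline{\Delta}^{\mathrm{ad}}\to\varprojlim \Delta/(\Delta\cap U)=\overline{\Delta}$ is a continuous bijection. As a continuous bijection from a compact space to a Hausdorff space, it is a topological isomorphism.

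\textbf{Step 2: the adelic closure is open.} We must check that $\overline{\Delta}^{\mathrm{ad}}$ is open, not just compact. This is where the hypotheses enter. Since $H$ is simply connected and $H_\infty$ is noncompact (and $H$ is simple, hence almost simple over $k$, so no anisotropic $k$-factor is compact at infinity), strong approximation gives that $H(k)$ is dense in $H(\mathbb A_k^f)$. Consequently the closure of $H(\mathcal O_k)=H(k)\cap\prod_v H(\mathcal O_v)$ is the open subgroup $\prod_v H(\mathcal O_v)$. Because $\Delta$ has finite index in a group commensurable with $H(\mathcal O_k)$, its closure has finite index in an open compact subgroup, so it is itself open. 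I expect this to be the main obstacle, since it is exactly where strong approximation, and hence noncompactness of $H_\infty$, is genuinely needed.

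\textbf{Step 3: using the CSP.} By CSP, the kernel $C$ of the surjection $\pi\colon\widehat{\Delta}\to\overline{\Delta}$ is finite. Since $\widehat{\Delta}$ is profinite, its open normal subgroups intersect trivially, so we can choose an open normal subgroup $V\le\widehat{\Delta}$ with $V\cap C=1$. Then $\pi|_V$ is injective, and $K:=\pi(V)$ has finite index in $\overline{\Delta}$. Being the image of a compact set, $K$ is closed, and a closed finite-index subgroup is open. Hence $K$ is open in $\overline{\Delta}$, and therefore compact open in $H(\mathbb A_k^f)$ by Steps 1 and 2. Finally, $\pi|_V\colon V\to K$ is a continuous bijection from a compact space to a Hausdorff space, so it is a topological isomorphism $V\cong K$.
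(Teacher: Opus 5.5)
Your proof is correct and follows the standard argument behind the cited result of Kammeyer--Kionke (the paper itself only quotes it without proof): you identify the congruence completion with the closure of $\Delta$ in $H(\mathbb A_k^f)$, use strong approximation to show this closure is open, and use finiteness of the congruence kernel to find an open $V\le\widehat{\Delta}$ that maps isomorphically onto its image. One wording fix in Step 2: argue via $\Delta\cap H(\mathcal O_k)$, whose closure has finite index in the open group $\prod_v H(\mathcal O_v)$ and is therefore open, and which lies in $\overline{\Delta}^{\mathrm{ad}}$, so $\overline{\Delta}^{\mathrm{ad}}$ is open as well.
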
 

\subsection{Consequences of Profinitely Isomorphic Lattices}

It is known by Kammeyer–Kionke that profinitely isomorphic lattices have adelic superrigidity:
\begin{theorem}\cite[Theorem 3.4]{kammeyer2021adelic}\label{thm:adelic}
    Let $\mathbf{G}$ and $\mathbf{H}$ be simply connected and absolutely almost simple groups of rank $\geq 2$, respectively over $k$ and $l$. Suppose there are arithmetic subgroups $\Gamma<\mathbf{G}(k)$ and $\Gamma'<\mathbf{H}(l)$ with $\widehat{\Gamma}\cong \widehat{\Gamma}'$. Then there exists an adeles isomorphism $j: \mathbb{A}^f_l\to \mathbb{A}^f_k$ together with an isomorphism  $\mathbf{G}\times_k\mathbb{A}^f_k\to \mathbf{H}\times_l\mathbb{A}^f_l$ over $j$.
\end{theorem}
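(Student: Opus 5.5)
The plan is to derive the adelic isomorphism from Margulis superrigidity applied one finite place at a time, and then to glue the local data together. Fix an isomorphism $\varphi\colon\widehat{\Gamma'}\to\widehat{\Gamma}$. For each finite place $v$ of $k$, the closure of $\Gamma$ in $\mathbf G(k_v)$ is compact and open in $\mathbf G(k_v)$, by strong approximation applied after shrinking $\Gamma$ to a principal congruence subgroup. The inclusion $\Gamma\to\mathbf G(k_v)$ factors through $\widehat\Gamma$, so we get a continuous map $\pi_v\colon\widehat\Gamma\to\mathbf G(k_v)$. Composing gives
\[
\rho_v\colon \Gamma'\to\widehat{\Gamma'}\xrightarrow{\varphi}\widehat\Gamma\xrightarrow{\pi_v}\mathbf G(k_v).
\]
Since $\Gamma'$ is dense in $\widehat{\Gamma'}$, the image of $\rho_v$ is dense in an open compact subgroup of $\mathbf G(k_v)$. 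In particular it is Zariski dense in $\mathbf G_v$, and it is not relatively compact in any coarser sense that would trivialize it.

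Because $\mathbf H$ has rank at least $2$ (in the sense of the sum of archimedean ranks), Margulis superrigidity applies to the representation $\rho_v$ of the arithmetic group $\Gamma'$. After passing to a finite-index subgroup and possibly modifying by a bounded central correction, there is a place $w$ of $l$, a continuous field embedding $\sigma\colon l_w\to k_v$, and a $k_v$-isogeny $\mathbf H\times_{l,\sigma}k_v\to\mathbf G_v$ that extends $\rho_v$. The image of $\rho_v$ has compact closure in a totally disconnected group, which forces $w$ to be non-archimedean. Since both groups are simply connected and absolutely almost simple, the isogeny is an isomorphism. Running the same argument with the roles of $\Gamma$ and $\Gamma'$ reversed gives a map $v\mapsto w(v)$ with an inverse, hence a bijection $V_f(k)\leftrightarrow V_f(l)$. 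It comes with isomorphisms $l_{w(v)}\cong k_v$ and $\mathbf G_v\cong\mathbf H_{w(v)}$ over those field isomorphisms.

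The remaining step, which I expect to be the main obstacle, is to assemble these local isomorphisms into an isomorphism of restricted products $j\colon\A_l^f\to\A_k^f$, together with a group isomorphism over $j$. The field isomorphisms $l_{w}\cong k_v$ are automatically continuous and match valuation rings, so $j$ is well defined on the restricted products. For the groups, one must check that for almost all $v$ the isomorphism $\mathbf G_v\cong\mathbf H_{w(v)}$ carries the integral points $\mathbf H(\mathcal O_{w})$ onto $\mathbf G(\mathcal O_v)$. I would try first to exploit that superrigidity extends $\rho_v$ on the whole of $\Gamma'$, whose image under $\rho_v$ lies in the image of $\widehat\Gamma$. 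Taking the product over all $v$, the extensions fit together into a single map that agrees with $\varphi$ on an open subgroup. Comparing $\Gamma'$-images then forces the local isomorphisms to respect the product compact open subgroups $\prod_v\mathbf G(\mathcal O_v)$ for almost all $v$. If this uniformity fails, the fallback is to modify each local isomorphism by an inner automorphism, which is harmless over $k_v$, so that integral structures match at the finitely many exceptional places.
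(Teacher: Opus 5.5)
The paper gives no proof of this statement; it cites Kammeyer--Kionke. Your architecture matches theirs: push $\Gamma'$ through $\widehat{\Gamma'}\cong\widehat\Gamma$ into the closure of $\Gamma$ (compact open by strong approximation), apply superrigidity place by place, glue, then use symmetry. But the steps carrying the real content are asserted rather than proved.

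\textbf{Superrigidity.} $\rho_v(\Gamma')$ is relatively compact, so the usual form of Margulis superrigidity into a non-archimedean group gives nothing. Your remark that the image is ``not relatively compact in any coarser sense'' is not an argument. You need the arithmetic form of superrigidity, for Zariski-dense homomorphisms of an arithmetic group into $\mathbf G(F)$ with $F$ any field of characteristic $0$. It yields a field embedding $\sigma_v\colon l\to k_v$. Then $w$ is the place this induces, and it is finite simply because $k_v$ is non-archimedean.

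\textbf{The bijection of places.} Reversing roles gives $w\mapsto v(w)$, but nothing you wrote shows $v(w(v))=v$. You need this both for bijectivity and to see that $l_{w(v)}\to k_v$ is onto. Write $\eta_v$ for your isomorphism and $\pi'_w\colon\widehat{\Gamma'}\to\mathbf H(l_w)$ for the extension of the inclusion. By continuity and density, $\pi_v\circ\varphi=\eta_v\circ\sigma_v\circ\pi'_w$ on an open subgroup of $\widehat{\Gamma'}$. Combined with the reversed identity, this makes $\pi_v$ factor through $\pi_{v(w(v))}$ on an open subgroup of $\widehat\Gamma$. Strong approximation forbids this unless $v(w(v))=v$, since such a subgroup has open image in $\mathbf G(k_v)\times\mathbf G(k_{v'})$. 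The continuous embeddings $l_w\to k_v\to l_w$ then force equal degrees over $\mathbb Q_p$.

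\textbf{Gluing, the main gap.} You first pass to a $v$-dependent finite-index subgroup and later use ``the whole of $\Gamma'$''. What you need is one $\Delta'\le\Gamma'$ on which $\rho_v=\eta_v\circ\sigma_v$ for all $v$ at once. This holds because $\gamma\mapsto(\eta_v\sigma_v(\gamma))^{-1}\rho_v(\gamma)$ is a homomorphism into $Z(\mathbf G)(k_v)$. Its exponent is bounded independently of $v$, and $\Gamma'$ is finitely generated.

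With $\Delta'$ in hand, for almost all $v$ the closure of $\Delta'$ in $\mathbf H(l_{w(v)})$ is $\mathbf H(\mathcal O_{w(v)})$. Likewise $\rho_v(\Delta')$ is dense in $\mathbf G(\mathcal O_v)$ for almost all $v$. Hence $\eta_v\circ\sigma_v$ maps $\mathbf H(\mathcal O_{w(v)})$ onto $\mathbf G(\mathcal O_v)$. This is roughly your ``comparing $\Gamma'$-images'', now with the input it needs. You must still upgrade it to an isomorphism of group schemes over $\A^f$. One way is to bound the degrees of the coordinate expressions of the $\eta_v$ uniformly. Then show their coefficients are integral at almost all places by evaluating at finitely many elements of $\Delta'$.

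\textbf{Your fallback.} It targets the wrong set of places. Finitely many exceptional places are irrelevant for a restricted product; what must be controlled is a cofinite set. For the bare existence statement you could discard $\varphi$ and re-choose the local isomorphisms to match hyperspecial subgroups. But that requires conjugating by $\mathbf G^{\mathrm{ad}}(k_v)$, not $\mathbf G(k_v)$, at infinitely many places.
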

When $k,l = \mathbb{Q}$, this implies that $\mathbf{G}$ and $\mathbf{H}$ agree at every finite place $p$, and one has a characterization from the symmetric space of their real places:
\begin{proposition}\cite[Proposition 2.5]{kammeyer2020profinite}\label{prop:symm_space}
    Let $\mathbf{G}$ and $\mathbf{G}'$ be semisimple linear algebraic groups over $\mathbb{Q}$, such that $\mathrm{Lie}(\mathbf{G})(\mathbb{Q}_p)\cong \mathrm{Lie}(\mathbf{G}')(\mathbb{Q}_p)$ for every prime $p$. Then their associated symmetric spaces $X = \mathbf{G}(\mathbb{R})/K$ and $X' = \mathbf{G}'(\mathbb{R})/K'$ satisfies that $\dim(X)\equiv \dim(X')\bmod 4$.
\end{proposition}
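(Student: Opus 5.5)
The plan is to encode the real form through a rational quadratic form whose real signature is determined by $\dim X$, and then use the local--global principle for quadratic forms over $\mathbb{Q}$.

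\textbf{Setup.} Let $\mathfrak g=\mathrm{Lie}(\mathbf G)$ and $\mathfrak g'=\mathrm{Lie}(\mathbf G')$. These are semisimple Lie algebras over $\mathbb Q$; they have the same dimension $n$, since they become isomorphic over any $\mathbb Q_p$. Consider their Killing forms $\kappa$ and $\kappa'$. These are nondegenerate quadratic forms over $\mathbb Q$ of rank $n$. The Killing form is intrinsic to the Lie algebra and commutes with base change. So the hypothesis $\mathfrak g\otimes\mathbb Q_p\cong\mathfrak g'\otimes\mathbb Q_p$ gives an isometry $\kappa_{\mathbb Q_p}\cong\kappa'_{\mathbb Q_p}$ for every prime $p$. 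Consequently the two forms have equal discriminants in $\mathbb Q_p^\times/\mathbb Q_p^{\times2}$ and equal Hasse invariants $\varepsilon_p$ at every finite $p$.

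\textbf{Transfer to $\mathbb R$.} Equality of discriminants in every $\mathbb Q_p$ means that $d/d'$ is a $p$-adic square for all $p$. Such a rational number is a square in $\mathbb Q$: it has even valuation everywhere, so it equals $\pm m^2$, and the minus sign is excluded because $-1$ is not a square in $\mathbb Q_3$. Hence $\mathrm{disc}(\kappa)=\mathrm{disc}(\kappa')$ already in $\mathbb Q^\times/\mathbb Q^{\times2}$, and in particular they have the same sign over $\mathbb R$. For the Hasse invariants, Hilbert reciprocity $\prod_{v}\varepsilon_v=1$ (product over all places of $\mathbb Q$) together with $\varepsilon_p(\kappa)=\varepsilon_p(\kappa')$ for all finite $p$ forces $\varepsilon_\infty(\kappa)=\varepsilon_\infty(\kappa')$.

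\textbf{Real computation.} Take a Cartan decomposition $\mathfrak g_{\mathbb R}=\mathfrak k\oplus\mathfrak p$. The Killing form is negative definite on $\mathfrak k$ and positive definite on $\mathfrak p$, and these summands are orthogonal. Moreover $\mathfrak p\cong T_{o}X$, so $\dim\mathfrak p=\dim X$. Thus $\kappa_{\mathbb R}$ is diagonalizable with exactly $r=\dim\mathfrak k=n-\dim X$ negative entries. For such a form,
\[
\operatorname{sign}(\mathrm{disc})=(-1)^r,\qquad \varepsilon_\infty=\prod_{i<j}(a_i,a_j)_\infty=(-1)^{r(r-1)/2}.
\]
The pair $\bigl(r \bmod 2,\ r(r-1)/2 \bmod 2\bigr)$ determines $r \bmod 4$. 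Therefore $r\equiv r'\pmod 4$, and since $n=n'$, we get $\dim X\equiv\dim X'\pmod 4$.

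\textbf{Where care is needed.} The only step needing real care is the transfer of the discriminant and Hasse invariant from all finite places to the real place, i.e.\ the square-class argument and Hilbert reciprocity. Everything else is a direct verification. One must also check that the signature of the Killing form computes $\dim\mathfrak p$ even when $\mathbf G(\mathbb R)$ has compact factors. It does: those factors contribute only to $\mathfrak k$ and to the negative part, consistently on both sides.
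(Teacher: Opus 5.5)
Your argument is correct and complete. Each step holds: the Killing forms are locally isometric at every prime, the discriminants therefore agree in $\mathbb{Q}^\times/\mathbb{Q}^{\times 2}$, Hilbert reciprocity transfers the equality of Hasse invariants to the real place, and the pair $\bigl((-1)^r,(-1)^{r(r-1)/2}\bigr)$ recovers $r=\dim\mathfrak{k} \bmod 4$.

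The paper gives no proof of its own; it simply quotes the result from Kammeyer--Kionke--Raimbault--Sauer. As far as I recall, their proof is essentially this same Killing-form and Hilbert-reciprocity argument, though I cannot confirm that from the paper itself.
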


Echtler and Kammeyer provide another necessary condition for profinitely isomorphic lattices in terms of their real places.
\begin{definition}
Let $\mathbf{G}$ and $\mathbf{H}$ be algebraic groups over a field $F$ which become isomorphic over the algebraic closure $\overline F$. They are said to be \emph{inner forms} of one another if there exists an isomorphism $\varphi:\mathbf G_{\overline F}\xrightarrow{\sim}\mathbf H_{\overline F}$ such that for every $\sigma\in\operatorname{Gal}(\overline F/F)$, the automorphism $\varphi^{-1}\circ {}^\sigma\!\varphi: \mathbf G_{\overline F}\to \mathbf G_{\overline F}$ is in $\operatorname{Inn}(G_{\overline{F}})$.
\end{definition}
\begin{proposition}\cite[Proposition 8]{echtler2024bounded}\label{inner_twist}
    Let $\mathbf{G}$ and $\mathbf{H}$ be simply connected and absolutely almost simple groups of rank $\geq 2$, respectively over $k$ and $l$. Suppose $\mathbf{G}(k)$ has a unique isotropic archimedean place $v$, and $\mathbf{H}(l)$ has a unique isotropic archimedean place $w$. If there are arithmetic subgroups $\Gamma<\mathbf{G}(k)$ and $\Gamma'<\mathbf{H}(l)$ with $\widehat{\Gamma}\cong \widehat{\Gamma'}$, then $\mathbf{G}(k_v)$ and $\mathbf{H}(l_w)$ are inner twists of each other.
\end{proposition}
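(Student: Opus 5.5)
The plan is to encode "inner versus outer" in a Galois character. We then show that the finite-place data supplied by Theorem \ref{thm:adelic}, together with a global reciprocity law, force the characters to agree at the two distinguished archimedean places $v$ and $w$.

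\textbf{Step 1: the inner class as a character.} For a simply connected absolutely almost simple group $\mathbf G$ over $k$, consider the image of its class under $H^1(k,\operatorname{Aut}\mathbf G_0)\to H^1(k,\operatorname{Out}\mathbf G_0)$, where $\mathbf G_0$ is the split form. Since $\operatorname{Out}\mathbf G_0$ is the symmetry group of the Dynkin diagram, this image is a homomorphism $\chi_{\mathbf G}\colon\operatorname{Gal}(\bar k/k)\to\operatorname{Out}$, well defined up to conjugacy. Two forms over a field are inner twists of each other exactly when their local characters agree.
\begin{itemize}
\item If $\operatorname{Out}$ is trivial, there is nothing to prove.
\item If $\operatorname{Out}\cong\mathbb Z/2$ (types $A_\ell$, $D_\ell$ with $\ell\ne4$, $E_6$), $\chi_{\mathbf G}$ is a quadratic character.
\item For $D_4$, $\operatorname{Out}\cong S_3$. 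Since a decomposition group at a real place is generated by one involution, whether $\chi$ is trivial at that place is detected by $\operatorname{sgn}\circ\chi$. So it suffices to handle the quadratic character $\operatorname{sgn}\circ\chi$.
\end{itemize}
We therefore reduce to quadratic characters $\chi_{\mathbf G}$ of $k$ and $\chi_{\mathbf H}$ of $l$, viewed as idele class characters via class field theory.

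\textbf{Step 2: matching the finite data.} By Theorem \ref{thm:adelic}, there is an isomorphism $j\colon\mathbb A_l^f\to\mathbb A_k^f$ and an isomorphism of $\mathbf G\times_k\mathbb A_k^f$ with $\mathbf H\times_l\mathbb A_l^f$ over $j$. The ring isomorphism $j$ induces a bijection $u\mapsto u'$ of finite places with $l_{u}\cong k_{u'}$, under which the groups are locally isomorphic. Hence $\chi_{\mathbf H,u}$ and $\chi_{\mathbf G,u'}$ correspond, and in particular
\[
\prod_{u\in V_f(l)}\chi_{\mathbf H,u}(-1)=\prod_{u'\in V_f(k)}\chi_{\mathbf G,u'}(-1),
\]
since $j$ sends $-1$ to $-1$. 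Moreover, $k$ and $l$ have isomorphic finite adele rings, so they are arithmetically equivalent. Consequently they have the same numbers of real and complex places.

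\textbf{Step 3: archimedean places and reciprocity.} Triviality of an idele class character on $-1\in k^\times$ gives
\[
\prod_{\text{all places}}\chi_{\mathbf G,u}(-1)=1.
\]
Complex places contribute $1$. At a real place, $\chi_u(-1)=-1$ exactly when $\chi_u$ is nontrivial. So the parity of the number of real places where $\chi_{\mathbf G}$ is nontrivial equals the finite product above, and likewise for $\mathbf H$.

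At every real place other than $v$ (respectively $w$), the group is anisotropic, hence the compact real form. This form is unique, so its inner class is fixed and depends only on the absolute type: $\chi$ is nontrivial there precisely when the compact form is outer to the split form, which happens exactly when $-1\notin W$. Since the numbers of real places agree, the contributions from the real places other than $v$ and $w$ coincide for $\mathbf G$ and $\mathbf H$.

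If $v$ is real, then $w$ must also be real. This would follow by comparing the complex-place counts, or from the fact that the real form is an invariant of the lattice, though that point needs a brief check. Parity then forces $\chi_{\mathbf G,v}=\chi_{\mathbf H,w}$, which means $\mathbf G(k_v)$ and $\mathbf H(l_w)$ are inner twists. If $v$ and $w$ are both complex, the statement is trivial.

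\textbf{Main obstacle.} The delicate step is Step 2, in two respects. First, one must verify that the adelic isomorphism over $j$ really matches the local outer characters place by place. This requires checking that $j$ respects the decomposition of $\mathbb A^f$ into local factors, with idempotents going to idempotents. Second, one must confirm that arithmetic equivalence yields equal signatures, which I would take from Perlis.

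The $D_4$ reduction via $\operatorname{sgn}$ also needs care: a triality character that is nontrivial of order $3$ globally still has well-defined real components, and these are detected by $\operatorname{sgn}\circ\chi$ as claimed.
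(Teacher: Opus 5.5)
Your proof is correct and complete. The paper gives no proof of this statement; it is quoted from Echtler--Kammeyer, so there is no in-paper argument to compare line by line.

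The closest analogue in the paper is the proof of Lemma~\ref{lem:rigidity_brauer}, which uses your template:
\begin{enumerate}
\item Theorem~\ref{thm:adelic} matches a local invariant at every finite place.
\item The invariant is the same at all non-distinguished real places, because the groups there are compact.
\item A global reciprocity law then forces agreement at the distinguished place.
\end{enumerate}
The only difference is which invariant is tracked. The paper tracks the Tits/Brauer class in $\operatorname{Br}(k)[2]$ and uses Albert--Brauer--Hasse--Noether reciprocity. You track the outer class in $H^1(k,\operatorname{Out})$ and use Artin reciprocity at the principal idele $-1$, which is equivalently Hilbert reciprocity for $(a,-1)_u$. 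For $D_4$ this class becomes a quadratic character after composing with the sign.

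The outer class is exactly the right invariant for the inner/outer question. Your $D_4$ reduction is sound: elements of order at most $2$ in $S_3$ are detected by the sign, and the compact $D_4$ form is inner (since $-1\in W$), so the compact places contribute trivially.

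Two points to tighten.
\begin{enumerate}
\item Do the step ``$v$ real implies $w$ real'' by the signature argument, which is immediate. Every complex place is automatically isotropic. So the unique-isotropic-place hypothesis forces $r_2(k)=1$ if $v$ is complex and $r_2(k)=0$ otherwise, and likewise for $l$; Perlis then gives $r_2(k)=r_2(l)$. Once $v$ and $w$ are both real, $k$ and $l$ are totally real of the same degree, so their compact real places are equinumerous.
\item Drop the alternative appeal to ``the real form is an invariant of the lattice.'' That holds for the abstract group, not for its profinite completion, and its failure is exactly what this paper exhibits.
\end{enumerate}

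Finally, the matching of places under $j$ is standard, and it is how one deduces arithmetic equivalence from Theorem~\ref{thm:adelic}. Minimal idempotents of $\mathbb A^f$ correspond to places, and ring isomorphisms between $p$-adic fields are automatically continuous, so they preserve residue characteristic and degree. Hence the ``main obstacle'' you flag is not a real gap.
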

\subsection{Central simple algebras and Brauer classes}
Let $A$ be a \emph{central simple algebra} over $k$, that is a finite-dimensional associative simple $k$-algebra with center $Z(A) = k$. In particular, a $4$-dimensional central simple algebra $A/k$ is isomorphic to a quaternion algebra over $k$. 

By the Artin--Wedderburn theorem, there exists a unique central division algebra $D/k$, up to isomorphism, and a positive integer $r$ such that $A\cong M_r(D)$. 
\begin{definition}
Two central simple algebras $A/k$, $A'/k$ are \emph{Brauer equivalent} if their underlying central division algebras $D/k$ and $D'/k$ are isomorphic. We denote by
$[A]$ the \emph{Brauer class} of $A$.

The set of Brauer classes forms a group, under the tensor product operation $([A_1],[A_2])\to [A_1\otimes_k A_2]$, called the \emph{Brauer group} of $k$ and is denoted by $\operatorname{Br}(k)$. 
\end{definition}

For a place $v$ of $k$, the local Brauer group $\operatorname{Br}(k_v)$ is isomorphic to $\mathbb{Q}/\mathbb{Z}$ if $v$ is non-archimedean, is isomorphic to $\frac12\mathbb{Z}/\mathbb{Z}\subset\mathbb{Q}/\mathbb{Z}$ if $v$ is real, and is trivial if $v$ is complex.
\begin{theorem}[Albert--Brauer--Hasse--Noether]
    For a place $v$ of $k$, denote the corresponding local invariant map by $\operatorname{inv}_v:\operatorname{Br}(k_v)\to\mathbb{Q}/\mathbb{Z}$. Then for every central simple algebra $A/k$, one has
    \[
        \sum_v \operatorname{inv}_v([A\otimes_k k_v])=0\in \mathbb{Q}/\mathbb{Z}.
    \]
\end{theorem}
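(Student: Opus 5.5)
This is the Albert--Brauer--Hasse--Noether reciprocity law, a classical result of global class field theory. I would not derive it from scratch. I would reduce it to the reciprocity law for the Brauer group, which comes from the exact sequence
\[
0\to \operatorname{Br}(k)\to \bigoplus_v \operatorname{Br}(k_v)\xrightarrow{\ \sum_v \operatorname{inv}_v\ }\mathbb{Q}/\mathbb{Z}\to 0 .
\]

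\textbf{Step 1: the sum is finite.} Since $A$ is defined over $\mathcal O_k[1/S]$ for a finite set $S$ of places and is Azumaya there, $A\otimes_k k_v$ splits for all but finitely many $v$. So only finitely many local invariants are nonzero.

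\textbf{Step 2: reduce to a cyclic algebra.} Every class in $\operatorname{Br}(k)$ is split by a cyclic extension $L/k$; by Grunwald--Wang one can arrange the local degrees to be large enough. Then $[A]$ is the class of a cyclic algebra $(L/k,\sigma,a)$ with $a\in k^\times$.

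\textbf{Step 3: compute the local invariants.} For a cyclic algebra, the local invariant at $v$ is
\[
\operatorname{inv}_v(A\otimes_k k_v)=\frac{i_v}{n},
\]
where the local Artin symbol $(a,L_w/k_v)$ equals $\sigma^{i_v}$ and $n=[L:k]$. Hence
\[
\sum_v \operatorname{inv}_v\bigl([A\otimes_k k_v]\bigr)=0
\]
is exactly the statement that
\[
\prod_v (a,L_w/k_v)=1 .
\]
This is Artin's global reciprocity law, applied to the principal idèle $a$.

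\textbf{Main obstacle.} The substantive input is Artin reciprocity, i.e. that the global Artin map kills $k^\times$. I would simply cite it from class field theory, for example Cassels--Fröhlich or Neukirch. The only care needed is in Step 2: matching the local invariant with the local norm residue symbol, including the normalization at real places, where both sides take values in $\tfrac12\mathbb{Z}/\mathbb{Z}$.

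In the paper itself this theorem is presumably used only for quaternion algebras. In that case it reduces to Hilbert reciprocity,
\[
\prod_v (a,b)_v=1,
\]
which could be cited directly.
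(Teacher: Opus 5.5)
The paper gives no proof of this statement. It is recorded in the preliminaries as a classical theorem of class field theory and used only as a black box: for $2$-torsion Brauer classes in Lemma~\ref{lem:rigidity_brauer}, and in the form of Hilbert reciprocity in Lemma~\ref{lem:Spin} and Proposition~\ref{prop:spin_d}. Your outline is Hasse's classical derivation: represent $[A]$ by a cyclic algebra, express its local invariants through norm residue symbols, and apply Artin reciprocity to the principal id\`ele $a$. It is correct, and it explains where the sum formula comes from, which the paper never needs. Your opening ``reduction'' to the exact sequence is only a restatement, since exactness at $\bigoplus_v\operatorname{Br}(k_v)$ already contains the theorem. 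The actual content is in Steps 1--3.

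\textbf{Other inputs in Step 2.} Artin reciprocity is not the only deep ingredient you are citing. In Step 2, a cyclic $L/k$ with large local degrees only makes $A\otimes_k L$ split at every place of $L$. You can get such an $L$ from Grunwald--Wang, or more simply from a cyclic cyclotomic extension, which avoids Wang's exceptional case. To conclude that $L$ actually splits $A$, so that $[A]=[(L/k,\sigma,a)]$, you need the local--global principle $\operatorname{Br}(L)\hookrightarrow\bigoplus_w\operatorname{Br}(L_w)$. Alternatively, cite directly that central simple algebras over number fields are cyclic. This is not circular, since injectivity is a different assertion from the vanishing of the sum, but it must be cited.

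\textbf{Normalization in Step 3.} The care you mention about matching invariants with norm residue symbols belongs to Step 3, not Step 2. There one uses that $(a,L_w/k_v)$ lies in the decomposition group $\langle\sigma^{n/n_w}\rangle$, where $n_w=[L_w:k_v]$. Writing $(a,L_w/k_v)=(\sigma^{n/n_w})^{j}$, the local algebra is $(L_w/k_v,\sigma^{n/n_w},a)$ with invariant $j/n_w=i_v/n$.

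\textbf{Reduction to Hilbert reciprocity.} The paper applies the theorem to $2$-torsion classes, not literally to quaternion algebras. These are Tits classes in $\operatorname{Br}(F)[2]$ and classes of algebras of degree $m+n$ with orthogonal involution. Reducing these cases to Hilbert reciprocity needs the fact that over a number field such a class is represented by a quaternion algebra (period equals index). That fact is itself a consequence of the same local--global theory.
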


\section{On the Profinite Rigidity of Semisimple Lie Group Lattices}
By Theorem \ref{thm:adelic}, profinitely isomorphic arithmetic lattices in algebraic groups of rank $\geq 2$ induce a bijection between the finite places of their number fields, under which the corresponding local fields and algebraic groups are isomorphic. After base change to algebraic closures, these local isomorphisms yield an isomorphism of the absolute root systems of the ambient algebraic groups. On the other hand, we are concerned with whether different Lie groups of the same absolute Lie type can possess lattices with isomorphic profinite completions. 

\subsection{An exact isomorphism after passing to finite index}

The following two facts (see Lemma \ref{Experts1} and Lemma \ref{Experts2}) are standard, we include the proof for the sake of completeness:

\begin{lemma}\label{Experts1}
	Let $\Gamma$ be a residually finite group and let $H\leq\Gamma$ be a finite-index subgroup, then the closure $\overline H$ of $H$ in $\widehat{\Gamma}$ is open and that $\Gamma\cap\overline H=H$.
\end{lemma}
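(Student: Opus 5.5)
The plan is to pass to a finite-index \emph{normal} subgroup contained in $H$ and use the kernel of the associated finite quotient. Since $H$ has finite index, its normal core $N=\bigcap_{g\in\Gamma}gHg^{-1}$ is a finite-index normal subgroup of $\Gamma$ with $N\le H$. Let $\pi:\Gamma\to\Gamma/N$ be the quotient map. By the universal property of $\widehat\Gamma$, it extends to a continuous surjection $\widehat\pi:\widehat\Gamma\to\Gamma/N$, where the target is finite and discrete. Denote by $\iota:\Gamma\to\widehat\Gamma$ the canonical map, which is injective because $\Gamma$ is residually finite; we identify $\Gamma$ with its image.

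The first step is to set $U:=\widehat\pi^{-1}(H/N)$. This is the preimage of a subset of a discrete space under a continuous map, so it is open, and it is also closed. Since $\widehat\pi\circ\iota=\pi$, we have $H\subseteq U$. Because $U$ is closed, this gives $\overline H\subseteq U$.

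The second step shows $U\subseteq\overline H$. Since $\Gamma$ is dense in $\widehat\Gamma$, any $x\in U$ is a limit of elements $\gamma\in\Gamma$. Because $U$ is open, all sufficiently close $\gamma$ lie in $U\cap\Gamma$. That intersection equals $\pi^{-1}(H/N)=HN=H$, so $x\in\overline H$. More concretely, $U$ is a finite union of cosets $h\,\ker\widehat\pi$ with $h\in H$, and $\ker\widehat\pi=\overline N$ because $\Gamma$ is dense. Either way we get $\overline H=U$, which is open.

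Finally, $\Gamma\cap\overline H=\Gamma\cap U=\pi^{-1}(H/N)=H$. The only delicate point is keeping track of the identification of $\Gamma$ with its image in $\widehat\Gamma$, which is where residual finiteness is used; without it the statement would hold for the image $\iota(H)$ but not for $H$ itself. I expect no real obstacle here. The main step is making sure the continuous extension $\widehat\pi$ exists, and it does because $N$ is one of the finite-index normal subgroups indexing the inverse limit, so $\widehat\pi$ is simply the projection onto the $N$-coordinate.
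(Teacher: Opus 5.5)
Your proposal is correct and follows essentially the same route as the paper. Both arguments take the normal core $N$ of $H$, pull back $H/N$ along the continuous extension $\widehat\Gamma\to\Gamma/N$ to get an open and closed subgroup meeting $\Gamma$ exactly in $H$, and then use density of $\Gamma$ in $\widehat\Gamma$ to identify this subgroup with $\overline H$.
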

\begin{proof}
Define $C:=\bigcap_{\gamma\in\Gamma}\gamma H\gamma^{-1}$. Note $C$ is actually the kernel of the action of $\Gamma$ on the finite set $\Gamma/H$, thus $C$ is a finite-index normal subgroup of $\Gamma$. Since $\Gamma/C$ is finite, it is a discrete profinite group. Now by the universal property of profinite groups, $q:\Gamma\longrightarrow\Gamma/C$ extends uniquely to a continuous surjection $\widehat q:\widehat{\Gamma}\longrightarrow\Gamma/C$. Moreover, $W_H:=\widehat q^{-1}(H/C)$ is an open and closed subgroup of $\widehat{\Gamma}$. It's easy to see $\Gamma\cap W_H=q^{-1}(H/C)=H$. Because $\Gamma$ is dense in $\widehat{\Gamma}$ and $W_H$ is open, the intersection $\Gamma\cap W_H=H$ is dense in $W_H$. On the other hand, $W_H$ is closed and contains $H$. It follows that $\overline H=W_H$. So we can conclude that $\overline H=W_H$ is open in $\widehat{\Gamma}$ and $\Gamma\cap\overline H=\Gamma\cap W_H=H$.
\end{proof}

\begin{lemma}\label{Experts2}
    Let $\Gamma$ be a residually finite group and $V\subset \widehat\Gamma$ is open, then $G:=\Gamma\cap V$ has finite index in $\Gamma$.
\end{lemma}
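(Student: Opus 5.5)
The plan is to use that the open subgroup $V$ contains an open \emph{normal} subgroup of $\widehat{\Gamma}$, and that open normal subgroups of $\widehat{\Gamma}$ cut out finite-index subgroups of $\Gamma$. Here $\Gamma$ is identified with its image in $\widehat{\Gamma}$, which is injective because $\Gamma$ is residually finite.

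\textbf{Step 1: a basic open normal subgroup inside $V$.} Since $V$ is open and contains the identity, the definition of the inverse limit topology on $\widehat{\Gamma}=\varprojlim_{N\in\mathcal N(\Gamma)}\Gamma/N$ gives some $N\in\mathcal N(\Gamma)$ with
\[
U_N:=\ker\bigl(\widehat{\Gamma}\to\Gamma/N\bigr)\subseteq V .
\]
Indeed, a basis of neighbourhoods of the identity consists of the kernels of the projections to the finite quotients $\Gamma/N$. If $V$ is open but is not assumed to be a subgroup, we still need $1\in V$; otherwise $\Gamma\cap V$ is not a subgroup and the statement concerns the subgroup case anyway. So I will take $V$ to be an open subgroup, as the intended usage (Theorem~\ref{CSP}) suggests.

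\textbf{Step 2: restrict the kernel to $\Gamma$.} The composite $\Gamma\to\widehat{\Gamma}\to\Gamma/N$ is the quotient map $\Gamma\to\Gamma/N$. Therefore $\Gamma\cap U_N=N$, and hence
\[
N=\Gamma\cap U_N\subseteq \Gamma\cap V=G .
\]
Since $V$ is a subgroup, $G$ is a subgroup of $\Gamma$. It contains the finite-index subgroup $N$, so $[\Gamma:G]\le[\Gamma:N]<\infty$.

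\textbf{Alternative route.} One can also argue via compactness: $\widehat{\Gamma}$ is compact and $V$ is an open subgroup, so $[\widehat{\Gamma}:V]<\infty$. The map $\Gamma/(\Gamma\cap V)\to\widehat{\Gamma}/V$ is injective, which again gives finite index.

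The only point needing care is recognising that the open set in Step~1 can be chosen to be a kernel of a finite quotient. This is the definition of the profinite topology, so I expect no real obstacle.
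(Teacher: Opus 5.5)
Your proof is correct. Your main route (Steps~1--2) differs from the paper's, while your ``alternative route'' is essentially the paper's argument.

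The paper argues as follows. Compactness of $\widehat\Gamma$ gives $[\widehat\Gamma:V]<\infty$. The coset map $\Gamma/G\to\widehat\Gamma/V$, $\gamma G\mapsto\gamma V$, is injective. Density of $\Gamma$ makes it surjective as well, since each coset $xV$ is open and therefore meets $\Gamma$. This yields the exact equality $[\Gamma:G]=[\widehat\Gamma:V]$.

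Your Steps~1--2 instead work directly from the inverse-limit topology. First, $V$ contains some $U_N=\ker(\widehat\Gamma\to\Gamma/N)$. This relies on $U_{N_1}\cap U_{N_2}=U_{N_1\cap N_2}$, so that finite intersections of basic identity neighbourhoods are again of this form. Second, $\Gamma\cap U_N=N$, which forces $N\le G$.

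What each approach buys:
\begin{itemize}
\item Your argument avoids compactness and density altogether, and it exhibits an explicit finite-index normal subgroup of $\Gamma$ inside $G$. However, it only gives the bound $[\Gamma:G]\le[\Gamma:N]$.
\item The paper's argument gives the index equality. That equality is the standard fact behind the correspondence between open subgroups of $\widehat\Gamma$ and finite-index subgroups of $\Gamma$.
\end{itemize}

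Your remark that $V$ must be an open \emph{subgroup} for the statement to make sense is apt. The paper's proof tacitly assumes this too, since it speaks of the index of $V$. This is also how the lemma is used in Proposition~\ref{propo:exact}.
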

\begin{proof}
    Since $V$ is open in the compact (topological) group $\widehat{\Gamma}$, it has finite index. Consider the map of coset spaces $$\phi:\Gamma/G\longrightarrow\widehat{\Gamma}/V,\qquad 
	\gamma G\longmapsto\gamma V.$$
	Note $\phi$ is injective: if $\gamma_1V=\gamma_2V$, then
	$\gamma_2^{-1}\gamma_1\in V\cap\Gamma=G,$
	and hence $\gamma_1G=\gamma_2G$. $\phi$ is also surjective. Indeed, every coset $xV$ is open in $\widehat{\Gamma}$, so the density of $\Gamma$ implies that $xV$ contains some $\gamma\in\Gamma$. Then
	$xV=\gamma V$.
	Thus $[\Gamma:G]=[\widehat{\Gamma}:V]<\infty$.
\end{proof}

\begin{proposition}\label{propo:exact}
   Let $\Gamma_1$ and $\Gamma_2$ be residually finite groups.  Suppose that there are isomorphic open subgroups $U_i\leq\widehat{\Gamma}_i$.  If each $\Gamma_i$ has a torsion-free, finite-index subgroup, then there are torsion-free, finite-index subgroups $G_i\leq\Gamma_i$ such that $\widehat{G}_1\cong\widehat{G}_2$. 
\end{proposition}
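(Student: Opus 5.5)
Fix a topological isomorphism $\phi:U_1\to U_2$ and torsion-free finite-index subgroups $T_i\le\Gamma_i$. The idea is to shrink $U_1$ until it is the closure of a subgroup of $\Gamma_1$ that lies inside $T_1$, and then pull the shrunken open subgroup back into $\Gamma_2$ through $\phi$ and intersect with $\Gamma_2$. The fact doing the work is the following: for a finite-index subgroup $H\le\Gamma$ of a residually finite group, the closure $\overline H\subseteq\widehat\Gamma$ is canonically isomorphic to $\widehat H$. This holds because every finite-index subgroup of $H$ has finite index in $\Gamma$, so it contains a finite-index normal subgroup of $\Gamma$. Hence the profinite topology of $\Gamma$ restricts to the profinite topology of $H$, and the completion of $H$ is its closure in $\widehat\Gamma$. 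We have $\overline H$ open by Lemma \ref{Experts1}.

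\textbf{Step 1.} Let $W_1:=U_1\cap\overline{T_1}$, which is open in $\widehat\Gamma_1$ because $\overline{T_1}$ is open by Lemma \ref{Experts1}. Then $\phi(W_1)$ is open in $U_2$, hence open in $\widehat\Gamma_2$. Set $W_2:=\phi(W_1)\cap\overline{T_2}$ and $W_1':=\phi^{-1}(W_2)$. Both are open, $\phi$ restricts to an isomorphism $W_1'\cong W_2$, and $W_i'\subseteq\overline{T_i}$ (with $W_2':=W_2$).

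\textbf{Step 2.} Put $G_i:=\Gamma_i\cap W_i'$. By Lemma \ref{Experts2}, $G_i$ has finite index in $\Gamma_i$. We have $G_i\subseteq\Gamma_i\cap\overline{T_i}=T_i$ by Lemma \ref{Experts1}, so $G_i$ is torsion-free.

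\textbf{Step 3.} Since $\Gamma_i$ is dense in $\widehat\Gamma_i$ and $W_i'$ is open, $G_i$ is dense in $W_i'$. As $W_i'$ is also closed, $\overline{G_i}=W_i'$. By the fact above, $\widehat{G_i}\cong\overline{G_i}=W_i'$, and therefore
\[
\widehat G_1\cong W_1'\cong W_2'\cong\widehat G_2 .
\]

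The main obstacle is justifying $\widehat{H}\cong\overline H$ carefully. The natural map $\widehat H\to\widehat\Gamma$ is continuous with image $\overline H$ by compactness, so what must be checked is injectivity. Given a finite-index normal subgroup $N\trianglelefteq H$, take its normal core $M$ in $\Gamma$, which has finite index and satisfies $M\le N$. Then $H\to H/N$ factors through $H/M\hookrightarrow\Gamma/M$, so the quotient maps of $\widehat H$ all factor through $\widehat\Gamma$. This gives injectivity, and the map is then a homeomorphism onto its image by compactness.
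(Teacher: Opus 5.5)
Your proof is correct and follows the paper's route. Your $W_1'=\phi^{-1}\bigl(\phi(U_1\cap\overline{T_1})\cap\overline{T_2}\bigr)$ is exactly the paper's $V_1=U_1\cap\overline{T_1}\cap\varphi^{-1}(U_2\cap\overline{T_2})$, and the remaining steps match the paper's: finite index via Lemma \ref{Experts2}, torsion-freeness via $\Gamma_i\cap\overline{T_i}=T_i$, density giving $\overline{G_i}=W_i'$, and $\widehat{G_i}\cong\overline{G_i}$. Your normal-core argument for injectivity of $\widehat H\to\widehat\Gamma$ spells out the step the paper only calls ``easy to see.''
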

\begin{proof}
Let $\varphi:U_1\to U_2$ be an isomorphism.  Choose torsion-free finite-index subgroups $T_i\leq\Gamma_i$ and denote their closures in $\widehat{\Gamma}_i$ by $\overline T_i$.  Since $T_i$ has finite index in $\Gamma_i$ (thus also residually finite), by Lemma \ref{Experts1}, $\overline T_i$ is open in $\widehat{\Gamma}_i$ and $\Gamma_i\cap\overline T_i=T_i$.
Set
\[
V_1=U_1\cap\overline T_1\cap
\varphi^{-1}(U_2\cap\overline T_2),
\qquad
V_2=\varphi(V_1).
\]
Then $V_i$ is open in $\widehat\Gamma_i$ (hence $V_i$ is profinite) and $\varphi$ restricts to an isomorphism $V_1\cong V_2$.  Define $G_i:=\Gamma_i\cap V_i$.
By Lemma \ref{Experts2}, each $G_i$ has finite index in $\Gamma_i$. Also note $G_i\leq T_i$, hence it is torsion-free. Moreover, $\Gamma_i$ is dense in $\widehat{\Gamma}_i$ and $V_i$ is open, $G_i=\Gamma_i\cap V_i$ is dense in $V_i$, thus $\overline{G_i}=V_i$.

Now since $G_i=\Gamma_i\cap V_i$ has finite index in $\Gamma_i$, it's easy to see that the profinite topology on $G_i$ induced from $\Gamma_i$ (and thus from $V_i$) is the full profinite topology of $G_i$. 
Consequently, completion of the inclusion to the profinite group $V_i$:$G_i\hookrightarrow V_i$ gives a topological inclusion: $\widehat{G}_i=\overline G_i\hookrightarrow V_i$, which is actually also a surjection since $G_i$ is dense in $V_i$, thus we get a topological isomorphism $\widehat{G}_i\cong V_i$. Therefore $\widehat{G}_1\cong V_1\cong V_2\cong\widehat{G}_2$.
\end{proof}

\begin{lemma}\label{lem:CSP}
    Let $k$ be a number field, and let $\mathbf{G}_i$, $i=1,2$ be two non-isomorphic simply connected and absolutely almost simple $k$-groups. Suppose that for $i=1,2$, $\mathbf{G}_i(k)$ has the \emph{congruence subgroup property}, $G_{i,\infty}:=\prod_{v\in V_\infty(k)}\mathbf G_i(k_v)$ is noncompact, and for any finite place $v\in V_f(k)$, the completions
    \[
    \mathbf{G}_1\times_kk_v\cong \mathbf{G}_2\times_kk_v.
    \]
    Then there exists torsion-free arithmetic lattices
    \[
    \Gamma_1<\mathbf{G}_1(k),\ \Gamma_2<\mathbf{G}_2(k),
    \]
    such that the profinite completions $\widehat{\Gamma}_1\cong \widehat{\Gamma}_2$.
\end{lemma}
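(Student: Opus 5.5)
Pick arithmetic subgroups $\Delta_i<\mathbf G_i(k)$ for $i=1,2$. Since $G_{i,\infty}$ is noncompact, Theorem \ref{KK2.5a} shows that each $\Delta_i$ is a lattice in $G_{i,\infty}$. Since $\mathbf G_i(k)$ has CSP, Theorem \ref{CSP} provides an open subgroup $V_i\le\widehat{\Delta}_i$, a compact open subgroup $K_i\le\mathbf G_i(\mathbb A_k^f)$, and a topological isomorphism $V_i\cong K_i$. The groups $\mathbf G_1$ and $\mathbf G_2$ are isomorphic at every finite place, so Lemma \ref{kklem2.6} gives a topological isomorphism $\psi:\mathbf G_1(\mathbb A_k^f)\to\mathbf G_2(\mathbb A_k^f)$. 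The image $\psi(K_1)$ is compact open in $\mathbf G_2(\mathbb A_k^f)$, so $\psi(K_1)\cap K_2$ is open in both $\psi(K_1)$ and $K_2$.

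Set $K':=\psi^{-1}(\psi(K_1)\cap K_2)$. This is open in $K_1$, and $\psi(K')$ is open in $K_2$. Transporting along the isomorphisms $V_i\cong K_i$, we obtain open subgroups $U_1\le V_1\le\widehat\Delta_1$ and $U_2\le V_2\le\widehat\Delta_2$ with $U_1\cong K'\cong\psi(K')\cong U_2$. An open subgroup of an open subgroup is open in $\widehat\Delta_i$, so $\widehat\Delta_1$ and $\widehat\Delta_2$ have isomorphic open subgroups.

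To apply Proposition \ref{propo:exact} we need two more facts. First, arithmetic groups are residually finite, since they are finitely generated linear groups (Mal'cev). Second, by Selberg's lemma each $\Delta_i$ has a torsion-free subgroup of finite index. Proposition \ref{propo:exact} then yields torsion-free finite-index subgroups $\Gamma_i\le\Delta_i$ with $\widehat\Gamma_1\cong\widehat\Gamma_2$. A finite-index subgroup of an arithmetic group is still arithmetic, and it is still a lattice in $G_{i,\infty}$, which finishes the construction.

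The main point needing care is making sure the open subgroup of $\widehat{\Delta}_i$ produced through the adelic identification is genuinely open in $\widehat\Delta_i$. This holds because everything is carried by topological isomorphisms onto open subgroups: open in $V_i$, with $V_i$ open in $\widehat\Delta_i$, gives open in $\widehat\Delta_i$. The non-isomorphism hypothesis on $\mathbf G_1$ and $\mathbf G_2$ is not used in the argument; it only makes the conclusion nontrivial.
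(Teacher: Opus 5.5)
Your proposal is correct and follows the paper's proof step by step. You build the adelic isomorphism from Lemma \ref{kklem2.6} and take the CSP open subgroups $V_i\cong K_i$. You then intersect $\psi(K_1)\cap K_2$ inside $\mathbf G_2(\mathbb A_k^f)$ to get isomorphic open subgroups of $\widehat\Delta_1$ and $\widehat\Delta_2$, and finish with Mal'cev, Selberg and Proposition \ref{propo:exact}.
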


\begin{proof}
    By assumption, for every finite place $v$ of $k$, $\mathbf{G}_1\times_kk_v\cong \mathbf{G}_2\times_kk_v$. By Lemma \ref{kklem2.6}, there is an isomorphism of topological groups $\Phi\colon \mathbf G_1(\mathbb A_k^f)\cong\mathbf G_2(\mathbb A_k^f)$. Let $\Delta_i\leq \mathbf G_i(k),i=1,2$ be arithmetic subgroups; for example, we can take $\Delta_i = \mathbf{G}_i(\mathcal{O}_k)$. Since $G_{i,\infty}$ is noncompact and $\mathbf G_i$ has the congruence subgroup property, by Theorem \ref{CSP}, there exists an open subgroup $V_i\leq \widehat{\Delta}_i$, and a compact open subgroup $C_i\leq \mathbf G_i(\mathbb A_k^f)$, such that there is a topological isomorphism $\theta_i\colon V_i\cong C_i$ for each $i=1,2$. 

    Define $L:=\Phi(C_1)\cap C_2$. Since $\Phi(C_1)$ and $C_2$ are compact open subgroups of $\mathbf G_2(\mathbb A_k^f)$, their intersection $L$ is a compact open subgroup. Moreover, $L$ is open in both $\Phi(C_1)$ and $C_2$. Define 
    $$U_1:= \theta_1^{-1}\bigl(\Phi^{-1}(L)\bigr) \leq V_1, \qquad U_2:=\theta_2^{-1}(L)\leq V_2.$$
    The subgroup $\Phi^{-1}(L)$ is open in $C_1$, and $L$ is open in $C_2$. It follows that $U_i$ is open in $V_i$, thus open in $\widehat{\Delta}_i$. The composition $\psi:=\theta_2^{-1}\circ\Phi\circ\theta_1$ restricts to a topological isomorphism $\psi\colon U_1\xrightarrow{\sim}U_2$. Thus $\widehat{\Delta}_1$ and $\widehat{\Delta}_2$ contain isomorphic open subgroups.

    The arithmetic groups $\Delta_1$ and $\Delta_2$  are residually finite by Mal'cev's theorem and have torsion-free finite-index subgroups by Selberg's lemma. Proposition \ref{propo:exact} therefore applies and gives torsion-free finite-index subgroups $\Gamma_i\leq\Delta_i, i=1,2$, such that $\widehat{\Gamma}_1 \cong\widehat{\Gamma}_2$. Since finite-index subgroups of arithmetic groups are arithmetic, each $\Gamma_i$ is an arithmetic subgroup of $\mathbf G_i(k)$.
\end{proof}
\begin{corollary}\label{cor:CSP}
    For $i=1,2$, write $G_{i,\infty} = \prod_{v\in V_\infty(k)}\mathbf{G}_i(k_v)$ as in Lemma \ref{lem:CSP} as a product of compact and non-compact factors:
    \[
    G_{i,\infty} = G_{i,\infty}^{\mathrm{nc}} \times G_{i,\infty}^{\mathrm{c}}.
    \]
    Then $\Gamma_i$ is isomorphic to a irreducible lattice $\overline{\Gamma}_i$ in $G_{i,\infty}^{\mathrm{nc}}$; in particular, $\widehat{\overline{\Gamma}_1}\cong \widehat{\overline{\Gamma}_2}$.

    Moreover, if $G_{i,\infty}^{\mathrm{c}}$ is non-trivial, then $\overline{\Gamma}_i$ is cocompact.
\end{corollary}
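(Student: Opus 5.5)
The plan is to project $\Gamma_i$ to the non-compact part and check that nothing is lost. Let $\pi_i\colon G_{i,\infty}\to G_{i,\infty}^{\mathrm{nc}}$ be the projection along the compact factor $G_{i,\infty}^{\mathrm{c}}$, and set $\overline{\Gamma}_i:=\pi_i(\Gamma_i)$. I would prove three things in turn: $\pi_i|_{\Gamma_i}$ is injective; $\overline{\Gamma}_i$ is an irreducible lattice in $G_{i,\infty}^{\mathrm{nc}}$; and $\overline{\Gamma}_i$ is cocompact when $G_{i,\infty}^{\mathrm{c}}\neq 1$. The statement $\widehat{\overline{\Gamma}_1}\cong\widehat{\overline{\Gamma}_2}$ then follows at once from the abstract isomorphism $\Gamma_i\cong\overline{\Gamma}_i$ and Lemma~\ref{lem:CSP}.

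\emph{Injectivity.} The kernel of $\pi_i|_{\Gamma_i}$ is $\Gamma_i\cap(\{1\}\times G_{i,\infty}^{\mathrm{c}})$. By Theorem~\ref{KK2.5a}, $\Gamma_i$ is a lattice in $G_{i,\infty}$ (here we use that $G_{i,\infty}$ is noncompact), so in particular it is discrete. The kernel is therefore a discrete subgroup of a compact group, hence finite. Since $\Gamma_i$ is torsion-free, the kernel is trivial.

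\emph{Lattice property.} The kernel $K_i:=G_{i,\infty}^{\mathrm{c}}$ of $\pi_i$ is compact. Standard facts give that the image of a lattice under a proper quotient map $G\to G/K$ with $K$ compact is discrete: if $\pi_i(\gamma_n)\to 1$, then $\gamma_n$ lies eventually in a compact neighbourhood of $K_i$, so only finitely many distinct $\gamma_n$ occur. The pushforward of a finite invariant measure on $G_{i,\infty}/\Gamma_i$ gives a finite invariant measure on $G_{i,\infty}^{\mathrm{nc}}/\overline{\Gamma}_i$, so $\overline{\Gamma}_i$ is a lattice. If $G_{i,\infty}^{\mathrm{c}}$ is nontrivial, Theorem~\ref{KK2.5a} says $\Gamma_i$ is cocompact, and the continuous surjection $G_{i,\infty}/\Gamma_i\to G_{i,\infty}^{\mathrm{nc}}/\overline{\Gamma}_i$ then shows $\overline{\Gamma}_i$ is cocompact too.

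\emph{Irreducibility.} I expect this to be the main obstacle, though it is standard. When $G_{i,\infty}^{\mathrm{nc}}$ has several simple factors, I would argue as follows. $\mathbf{G}_i$ is absolutely almost simple over $k$, so each archimedean factor $\mathbf{G}_i(k_v)$ is almost simple. By strong approximation, or equivalently weak approximation together with density of $\mathbf{G}_i(k)$, the projection of $\Gamma_i$ to any proper nontrivial subproduct of noncompact factors is dense. This is the usual criterion for irreducibility of arithmetic lattices obtained by restriction of scalars, as in Margulis' arithmeticity setup. In the cases the paper needs, where there is a single noncompact factor, irreducibility is automatic.
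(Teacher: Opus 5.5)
Your proposal is correct and follows the paper's own route: Theorem~\ref{KK2.5a} for the lattice and cocompactness statements, triviality of the kernel $\Gamma_i\cap G_{i,\infty}^{\mathrm{c}}$ by discreteness plus torsion-freeness, and the standard irreducibility of arithmetic lattices of a simply connected absolutely almost simple $k$-group (the paper only cites Lubotzky--Segal, whereas you sketch it via strong approximation and also spell out the discreteness and pushforward-measure details for the image). One correction: your closing remark is wrong. The paper does use several noncompact factors, e.g.\ $\mathrm{Spin}(2n-1,2)^r$ with $r\ge 2$ in Theorem~\ref{mainThem2}, and irreducibility is essential in Lemma~\ref{lem:dense_and_inj}, so your strong-approximation argument is genuinely needed; note also that it is strong approximation, not weak approximation, that controls the projections of $\Gamma_i$ itself.
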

\begin{proof}
By the Theorem \ref{KK2.5a}, the noncompactness of $G_{i,\infty}$ implies that $\Delta_i$, and hence $\Gamma_i$, is a lattice in $G_{i,\infty}$. If $\mathbf G_i(k_v)$ is compact for some archimedean place $v$, then the same theorem implies that this lattice is cocompact.

Projection onto $G_{i,\infty}^{\mathrm{nc}}$ sends $\Gamma_i$ to a lattice. Its kernel $\Gamma_i\cap G_{i,\infty}^{\mathrm{c}}$ is finite, since $\Gamma_i$ is discrete and $G_{i,\infty}^{\mathrm{c}}$ is compact. Since $\Gamma_i$ is torsion-free, the kernel is trivial. Thus $\Gamma_i$ is naturally isomorphic to its image in $G_{i,\infty}^{\mathrm{nc}}$. 

{Since $\mathbf G_i$ is absolutely almost simple over $k$, the standard
irreducibility theorem for arithmetic lattices implies that the image of
$\Gamma_i$ in $G^{\mathrm{nc}}_{i,\infty}$ is irreducible; see
\cite[Section~7.2]{lubotzky2003subgroup}.}
\end{proof}
\subsection{Profinite Non-rigidity across Classical Real Forms}
In the following theorem, we construct families of pairs of simply connected absolutely almost simple $k$-groups $\mathbf G$ and $\mathbf G'$ of the same absolute Dynkin type $A_\ell$, $B_\ell$, $C_\ell$, or $D_\ell$ such that $\mathbf G_v\cong\mathbf G'_v$ for every finite place $v\in V_f(k)$, while $\mathbf G_w\not\cong\mathbf G'_w$ for some real place $w\in V_\infty(k)$. Applying Lemma~\ref{lem:CSP} and Corollary~\ref{cor:CSP}, these constructions yield arithmetic lattices in different archimedean forms with isomorphic profinite completions. Thus the profinite completion of an arithmetic lattice does not, in general, determine the archimedean form of its ambient algebraic group.

\begin{theorem}\label{thm:non_rigidity}
    Let $r_-$, $r_s$ and $r_c\in\mathbb{N}$ with $r_-\geq 1$, and let $m,n,m'$, and $n'\in\mathbb{N}^*$ with $m+n = m'+n'\coloneqq d$.
    \begin{enumerate}
        \item Suppose $d\geq 5$, $r_-\min(m,n,m',n') + r_c\lfloor d/2\rfloor \geq 2$, and one of the following holds:
        \begin{itemize}
            \item $4\mid (m-m')$ or $4\mid(m-n')$;
            \item $2\mid (m-m')$ or $2\mid(m-n')$, and $2\mid r_-$.
        \end{itemize}
        Then, there exist {irreducible} torsion-free cocompact arithmetic lattices
        \[
        \Gamma< \mathrm{Spin}(m,n)^{r_-}\times \mathrm{Spin}(d,\mathbb{C})^{r_c},\quad \Gamma'< \mathrm{Spin}(m',n')^{r_-}\times \mathrm{Spin}(d,\mathbb{C})^{r_c}
        \]
        with profinite completions $\widehat{\Gamma}\cong \widehat{\Gamma}'$.
        \item Suppose $d\geq 3$, $r_-\min(m,n,m',n')+(r_s+r_c)(d-1)\geq 2$, and either $2\mid (m-m')$ or $2\mid(m-n')$. Then, there exist {irreducible} torsion-free cocompact arithmetic lattices
        \[
        \Gamma< \mathrm{SU}(m,n)^{r_-}\times \mathrm{SL}(d,\mathbb{R})^{r_s}\times \mathrm{SL}(d,\mathbb{C})^{r_c},\quad \Gamma'< \mathrm{SU}(m',n')^{r_-}\times \mathrm{SL}(d,\mathbb{R})^{r_s}\times \mathrm{SL}(d,\mathbb{C})^{r_c},
        \]
        with $\widehat{\Gamma}\cong \widehat{\Gamma}'$.
        \item Suppose $d\geq 2$ and $r_-\min(m,n,m',n')+(r_s+r_c)d\geq 2$. Then, there exist {irreducible} torsion-free cocompact arithmetic lattices
        \[
        \Gamma< \mathrm{Sp}(m,n)^{r_-}\times \mathrm{Sp}(2d,\mathbb{R})^{r_s}\times \mathrm{Sp}(2d,\mathbb{C})^{r_c},\quad \Gamma'< \mathrm{Sp}(m',n')^{r_-}\times \mathrm{Sp}(2d,\mathbb{R})^{r_s}\times\mathrm{Sp}(2d,\mathbb{C})^{r_c},
        \]
        with $\widehat{\Gamma}\cong \widehat{\Gamma}'$.
    \end{enumerate}
\end{theorem}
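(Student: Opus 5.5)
The plan is to reduce every case to Lemma~\ref{lem:CSP} and Corollary~\ref{cor:CSP}. Choose a number field $k$ with exactly $r_-+r_s$ real places and $r_c$ complex places; such a field exists for any such pair with $r_-+r_s\ge1$, for instance by taking a suitable irreducible polynomial over $\mathbb{Q}$ of degree $r_-+r_s+2r_c$. Then build two simply connected absolutely almost simple $k$-groups $\mathbf G,\mathbf G'$ with the following properties. At $r_-$ of the real places they should have the prescribed compact-type-changing forms ($\Spin(m,n)$ versus $\Spin(m',n')$, $\mathrm{SU}(m,n)$ versus $\mathrm{SU}(m',n')$, $\mathrm{Sp}(m,n)$ versus $\mathrm{Sp}(m',n')$). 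At the remaining $r_s$ real places they should be split (or compact, where useful). At every finite place they should be isomorphic.

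The rank hypothesis $\sum_{v\in V_\infty}\operatorname{rank}_{k_v}\ge 2$ is exactly the displayed inequality, since the real rank of $\Spin(m,n)$, $\mathrm{SU}(m,n)$ and $\mathrm{Sp}(m,n)$ is $\min(m,n)$. This gives noncompactness of $G_\infty$ and CSP via Theorem~\ref{congruencePro}. For type $A$ (case (2)) and for $D_\ell$ with small $\ell$, CSP is not covered by Theorem~\ref{congruencePro}. Here I would use the known results for cocompact inner forms of $\mathrm{SU}$ and for $\Spin$ with $k$ not totally imaginary (Rapinchuk, Raghunathan), or argue that the lattices we build fall into cases where CSP is established. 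I regard this as a citation issue rather than the core difficulty.

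\textbf{Type $B/D$.} Take quadratic forms $q,q'$ of dimension $d$ over $k$. By Hasse--Minkowski, $\Spin(q)\cong \Spin(q')$ locally at $v$ when $q_v\simeq \lambda q'_v$, and it suffices that the determinants agree up to squares and the Hasse invariants agree at every finite place. At the $r_-$ real places we prescribe signatures $(m,n)$ and $(m',n')$; at complex places nothing is needed. By the local–global existence theorem for quadratic forms, we may prescribe the discriminant $\delta$, finite Hasse invariants $\epsilon_v$, and real signatures, subject to two constraints: the sign of $\delta$ at each real place must match the signature, and $\prod_v\epsilon_v=1$. The determinant sign condition at the $r_-$ places forces $n\equiv n'\pmod 2$, or after swapping (replacing $q'$ by $-q'$) $m\equiv n'\pmod 2$. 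The real Hasse invariant of signature $(m,n)$ depends on $n(n-1)/2 \bmod 2$. Hence equality of global products with identical finite invariants requires $r_-\cdot\bigl(\binom{n}{2}-\binom{n'}{2}\bigr)$ to be even. This holds when $4\mid(n-n')$, or when $2\mid(n-n')$ and $2\mid r_-$, which matches the stated hypotheses. Since a Hasse-invariant mismatch can also be absorbed by rescaling when $d$ is odd, these are the stated conditions. The condition $d\ge5$ keeps us in types $B_\ell,D_\ell$ with $\ell\ge 2$ and away from the exceptional low-dimensional isogenies. Cocompactness comes from compact factors, or from anisotropy of $q$ over $k$, which holds because $q$ is definite nowhere but anisotropic at a finite place; to be safe, I would choose some $\epsilon_v$ to make $q_v$ anisotropic. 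Then Godement's criterion gives cocompactness.

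\textbf{Type $A$.} Let $K/k$ be a quadratic extension that is complex at the $r_-$ places, real-split at the $r_s$ places, and arbitrary at complex places. Take hermitian forms $h,h'$ over $K$ of dimension $d$. By Landherr, hermitian forms are classified by their determinant in $k^\times/N(K^\times)$ and their real signatures, and locally at finite places by the determinant alone. The determinants must agree, up to norms, at all finite places. By Hasse's norm theorem and the product formula for the norm residue symbol, this is possible exactly when the real determinant signs $(-1)^n$ and $(-1)^{n'}$ differ at an even number of places, which is ensured by $2\mid(n-n')$ or, after swapping $h'\mapsto -h'$, by $2\mid(m-n')$. Cocompactness again follows by choosing anisotropy at a finite place; for hermitian forms this is only possible when $d\le2$, so for $d\ge3$ I would instead use unitary groups of central division algebras with involution. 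Inner forms $\mathrm{SU}(D,\tau)$, with $D$ a degree-$d$ division algebra over $K$ ramified at two split primes, are isomorphic locally at finite places for both choices.

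\textbf{Type $C$.} Use $\mathrm{SU}$ of a hermitian form over a quaternion division algebra $Q/k$ ramified at exactly the $r_-$ real places (plus one finite place if $r_-$ is odd), with the form indefinite of the two signatures. Quaternionic hermitian forms over local non-archimedean fields are determined by dimension, so only the real signatures differ, and no parity condition is needed. At the $r_s$ real places $Q$ splits, yielding $\mathrm{Sp}(2d,\mathbb R)$. Cocompactness uses the finite ramified place, or anisotropy.

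The \emph{main obstacle} is the existence step in each case: producing global forms with the prescribed local data and verifying that the parity hypotheses are exactly the obstruction from reciprocity. Torsion-freeness and irreducibility then follow from Lemma~\ref{lem:CSP} and Corollary~\ref{cor:CSP}.
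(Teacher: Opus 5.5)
Your proposal has a genuine gap at the cocompactness step, in all three cases. As set up, your field $k$ has no real place at which the groups are compact. In cases (2) and (3) the $r_s$ real places must carry the split factors of the target group, and you mention compact factors only in passing. You then try to get $k$-anisotropy from a finite place, and this does not work.

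\textbf{Case (1).} Here $d\ge5$, and every quadratic form of dimension $\ge5$ over a non-archimedean local field is isotropic. So no choice of $\epsilon_v$ makes $q_v$ anisotropic. Since $q$ is also indefinite at every real place, Hasse--Minkowski makes $q$ isotropic over $k$. Hence $\mathrm{Spin}(q)$ is $k$-isotropic and its arithmetic subgroups are \emph{not} cocompact.

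\textbf{Case (3).} The same failure occurs. Over a local quaternion division algebra with its canonical involution, every $d$-dimensional hermitian form is isometric to $\langle1^{[d]}\rangle$, and $\langle1,1\rangle\cong\langle1,-1\rangle$ is hyperbolic. So for $d\ge2$ the group at your finite ramified place has positive rank. The form $h$ is then isotropic at every place, hence over $k$ (apply Hasse--Minkowski to $x\mapsto h(x,x)$).

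\textbf{Case (2).} You correctly observe that hermitian forms of dimension $d\ge3$ are isotropic at every finite place. However, the replacement $\mathrm{SU}(D,\tau)$, with $D$ a division algebra of degree $d\ge3$, is an anisotropic \emph{outer} form of type ${}^2A_{d-1}$. The congruence subgroup property for such groups is not known in general, so Lemma~\ref{lem:CSP} cannot be invoked; this is more than a citation issue. You also do not show that two involutions on $D$ with the required real signatures give isomorphic local groups at the nonsplit finite places.

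The missing idea is the one the paper uses. Enlarge the archimedean data by at least one auxiliary real place ($r_+\ge1$ in the paper) at which \emph{both} groups are compact:
\begin{itemize}
\item in case (1), $q,q'$ positive definite, giving a factor $\mathrm{Spin}(d)$;
\item in case (2), $K_v\cong\mathbb C$ with $h,h'$ definite, giving $\mathrm{SU}(d)$;
\item in case (3), $Q_v\cong\mathbb H$ with $h,h'$ definite, giving $\mathrm{Sp}(d)$.
\end{itemize}
Theorem~\ref{KK2.5a} then gives cocompactness. The two forms have identical local invariants at these extra places, so your reciprocity and parity bookkeeping is unchanged. Corollary~\ref{cor:CSP} shows that projecting away the compact factors is injective on the torsion-free lattices and preserves irreducibility. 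In type $A$ this also lets you keep hermitian forms over a quadratic extension, where CSP is available.

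With that repair, your local--global route is a valid alternative to the paper's argument, and more conceptual. You prescribe discriminant, finite Hasse invariants and real signatures via the existence theorem, then read the parity hypotheses off the reciprocity law. The paper instead uses explicit forms $\langle1^{[m]},\alpha^{[n]}\rangle$, with $\alpha$ a unit and $(2)$ prime, and computes the symbols $(\alpha,\alpha)_v$ directly.
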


\begin{remark}
The algebraic groups used in the three constructions above are simply connected and absolutely almost simple of classical absolute Dynkin type. If $d=m+n=m'+n'$, then the spin groups have absolute type $B_\ell$ when $d=2\ell+1$ and $D_\ell$ when $d=2\ell$; the special unitary groups have absolute type $A_{d-1}$; and the quaternionic unitary groups giving the real forms $\mathrm{Sp}(m,n)$ have absolute type $C_d$. These identifications, together with the dimensions of the associated
symmetric spaces, are summarized in Table \ref{tab:classical-real-forms} in Appendix \ref{appendix:a}. After projecting away the compact archimedean factors, the ambient Lie groups are precisely the groups denoted $G_{i,\infty}^{\mathrm{nc}}$ in Corollary~\ref{cor:CSP}.
\end{remark}

Before we give the proof of Theorem \ref{thm:non_rigidity}, we first give the following motivational example:

\begin{example}
    Let $f(x) = x^3 + 3x^2 - 1$ and let $\alpha$ be a root of $f$. The polynomial $f$ is irreducible both over $\mathbb{F}_2$ and over $\mathbb{Q}$. Put $k = \mathbb{Q}(\alpha)$. Label the three real embeddings
    \[
    \sigma_i: k\hookrightarrow\mathbb{R},\quad i=1,2,3,
    \]
    so that $\sigma_1(\alpha)<-2$, $-1<\sigma_2(\alpha)<0$, and $\sigma_3(\alpha)>0$.

    Consider the five-dimensional quadratic forms over $k$:
    \[
    q_1\coloneqq \langle 1^{[2]},\alpha^{[3]}\rangle,\quad q_2\coloneqq \langle 1^{[4]},\alpha^{[1]}\rangle.
    \]
    By Lemma~\ref{lem:Spin}, $q_1$ and $q_2$ are isometric over $k_v$ for every finite place $v$. We spell out the dyadic case. Since $f(x)$ is irreducible over $\mathbb{F}_2$, there is a unique place $v_2$ of $k$ above $2$, and $k_{v_2}\cong\mathbb{Q}_2(\alpha)$. We claim that $(\alpha,\alpha)_{v_2}=1$. Equivalently, $\alpha^{-1}$ is a sum of two squares in $k_{v_2}$. Take $u=\alpha^2+\alpha$; the relation $\alpha^3+3\alpha^2-1=0$ gives
    \[
        \alpha^{-1} - u^2 = (\alpha^2 + 3\alpha) - (4\alpha^2 + \alpha - 1) = -3\alpha^2 + 2\alpha + 1.
    \]
    On the other hand,
    \[
        (2\alpha^2 + 3\alpha + 1)^2 = 13\alpha^2 + 10\alpha + 1 = -3\alpha^2 + 2\alpha + 1 +  8(2\alpha^2 + \alpha),
    \]
    so $-3\alpha^2 + 2\alpha + 1\equiv (2\alpha^2 + 3\alpha + 1)^2\bmod 8$. By Hensel's lemma, there exists $v\in k_{v_2}$ with $v^2=-3\alpha^2+2\alpha+1$, and hence $\alpha u^2+\alpha v^2=1$. Therefore $(\alpha,\alpha)_{v_2}=1$, so $\epsilon_{v_2}(q_1)=\epsilon_{v_2}(q_2)=1$. Their determinant square classes also agree; hence $q_1\otimes_k k_{v_2}\cong q_2\otimes_k k_{v_2}$.

    Now put
    \[
    \mathbf{H}_1\coloneqq \mathrm{Spin}(q_1),\quad \mathbf{H}_2\coloneqq \mathrm{Spin}(q_2).
    \]
    By the signs of $\sigma_i(\alpha)$, $i=1,2,3$, we have
    \[
    \mathbf H_1(k\otimes_{\mathbb Q}\mathbb R)
    \cong
    \Spin(2,3)\times\Spin(2,3)\times\Spin(5)
    \]
    and
    \[
    \mathbf H_2(k\otimes_{\mathbb Q}\mathbb R)
    \cong
    \Spin(4,1)\times\Spin(4,1)\times\Spin(5).
    \]
    Proposition~\ref{prop:Spin} yields torsion-free cocompact lattices
    \[
    G_1< \mathrm{Spin}(2,3)\times \mathrm{Spin}(2,3), \quad G_2< \mathrm{Spin}(4,1)\times \mathrm{Spin}(4,1),
    \]
    with isomorphic profinite completions $\widehat{G}_1\cong \widehat{G}_2$.
\end{example}

Now we establish the following local and global statements:

\begin{lemma}\label{lem:Spin}
    Let $\alpha$ be an algebraic unit and put $k=\mathbb{Q}(\alpha)$. Suppose that $k$ has $r_+$ real places at which $\alpha$ is positive, $r_-$ real places at which $\alpha$ is negative, and $r_c$ complex places. Thus
    \[
    [k:\mathbb{Q}]=r_++r_-+2r_c.
    \]
    Suppose that $r_+\geq1$ and that $r_-,r_c,m,n,m',n'$ and $d$ satisfy the hypotheses of Theorem~\ref{thm:non_rigidity}(1), with the divisibility conditions restricted to those involving $(m-m')$. In the second alternative of that theorem, assume in addition that the ideal $(2)\subset\mathcal O_k$ is prime. Define
    \[
    q=\langle1^{[m]},\alpha^{[n]}\rangle,
    \qquad
    q'=\langle1^{[m']},\alpha^{[n']}\rangle.
    \]
    Let $\mathbf G=\mathrm{Spin}(q)$ and $\mathbf G'=\mathrm{Spin}(q')$ be the associated spin $k$-groups. Then
    \[
    \mathbf G\times_k k_v\cong\mathbf G'\times_k k_v
    \]
    for every finite place $v$ of $k$.
\end{lemma}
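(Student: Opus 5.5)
The strategy is to reduce the isomorphism of spin groups at each finite place to an isometry of the underlying quadratic forms, and then check isometry with the classical local invariants. Spin groups of nondegenerate forms of the same dimension $d$ over a local field $k_v$ are isomorphic as soon as the forms are similar, in particular when they are isometric. Over a non-archimedean local field, two nondegenerate quadratic forms of equal dimension are isometric if and only if they have the same discriminant square class and the same Hasse--Witt invariant $\epsilon_v$. So it suffices to show $\det q\equiv\det q'$ in $k_v^\times/k_v^{\times2}$ and $\epsilon_v(q)=\epsilon_v(q')$ for every finite $v$.

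The discriminant gives $\det q=\alpha^n$ and $\det q'=\alpha^{n'}$. Since $n-n'=m'-m$, these agree modulo squares whenever $2\mid(m-m')$, which holds under either alternative of the hypotheses. For the Hasse invariant, write $\epsilon_v=\prod_{i<j}(a_i,a_j)_v$. Pairs of $1$'s and mixed pairs $(1,\alpha)$ contribute trivially. Hence $\epsilon_v(q)=(\alpha,\alpha)_v^{\binom n2}$ and $\epsilon_v(q')=(\alpha,\alpha)_v^{\binom{n'}2}$. Since $(\alpha,\alpha)_v=(\alpha,-1)_v$, it remains to show that $(\alpha,-1)_v^{\binom n2-\binom{n'}2}=1$ for every finite $v$.

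Now use that $\alpha$ is a unit. At an odd place $v$, both $\alpha$ and $-1$ are $v$-adic units, so the tame symbol gives $(\alpha,-1)_v=1$. At dyadic places, split according to the hypotheses.
\begin{itemize}
\item If $4\mid(m-m')$, then $n\equiv n'\pmod 4$. Since $\binom n2 \bmod 2$ depends only on $n\bmod 4$, the exponent is even and the symbols agree.
\item If only $2\mid(m-m')$ and $2\mid r_-$, the exponent may be odd, so we must show directly that $(\alpha,-1)_v=1$ at the dyadic places. Here we use the extra assumption that $(2)$ is prime, so there is a unique dyadic place $v_2$. By Hilbert reciprocity, $\prod_v(\alpha,-1)_v=1$. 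The odd places contribute $1$, as do complex places. A real place contributes $-1$ exactly when $\alpha<0$ there, so the real places contribute $(-1)^{r_-}=1$. Hence $(\alpha,-1)_{v_2}=1$.
\end{itemize}

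The main obstacle is this dyadic case in the second alternative. Without uniqueness of the dyadic place, reciprocity controls only the product of the dyadic symbols, not each one. The plan handles this with the primality of $(2)$, using the parity of $r_-$ through reciprocity. The remaining concern is to verify the normalization $(\alpha,\alpha)_v=(\alpha,-1)_v$ and that the Hasse invariant convention (the product over $i<j$ versus $i\le j$) does not change the exponent parity argument. With the $i\le j$ convention, one obtains $\binom{n+1}2$ instead, and $n\equiv n'\pmod 4$ still controls the parity, so the argument is unaffected.
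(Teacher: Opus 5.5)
Your proof is correct and follows essentially the same route as the paper: you reduce to isometry of the forms via the local classification by dimension, discriminant and Hasse invariant, compute $\epsilon_v(q)=(\alpha,\alpha)_v^{n(n-1)/2}$, settle the case $4\mid(m-m')$ by the parity of $\binom{n}{2}$, and in the second alternative combine triviality at odd places and the evenness of $r_-$ with Hilbert reciprocity at the unique dyadic place. Your rewriting $(\alpha,\alpha)_v=(\alpha,-1)_v$, together with the tame-symbol argument at odd places, is just a reformulation of the paper's observation that the unit $\alpha^{-1}$ is a sum of two squares there.
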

\begin{proof}
    By \cite[Chapter VI, Theorem 2.12]{lam2005introduction}, a nondegenerate quadratic form over a non-archimedean local field is determined up to isometry by its dimension, determinant square class, and Hasse invariant. Let $v$ be a finite place of $k$. For $q = \langle 1^{[m]},\alpha^{[n]}\rangle$, the determinant square class is
    \[
    [\det(q)]_v=[\alpha^n]_v\in k_v^\times/(k_v^\times)^2,
    \]
    and the Hasse invariant is
    \[
    \epsilon_v(q)=\prod_{i<j}(a_i,a_j)_v
    =(\alpha,\alpha)_v^{n(n-1)/2},
    \]
    because $(1,1)_v=(1,\alpha)_v=1$.

    If $4\mid(m-m')$, then $4\mid(n-n')$. Hence $q$ and $q'$ have the same determinant square class and the same Hasse invariant at every finite place, so $q\otimes_k k_v\cong q'\otimes_k k_v$.

    Now suppose that $2\mid(m-m')$ and $2\mid r_-$. The determinant square classes still agree. If $v$ lies above an odd rational prime, then $\alpha^{-1}\in\mathcal O_{k_v}^\times$ is a sum of two squares in $k_v$; equivalently,
    \[
    (\alpha,\alpha)_v=1.
    \]
    At a real place, $(\alpha,\alpha)_v=-1$ precisely when $\alpha$ is negative, while at a complex place the Hilbert symbol is trivial. Since $r_-$ is even, the product of the archimedean symbols is $1$. Finally, because $(2)$ is prime in $\mathcal O_k$, there is a unique place $v_2$ above $2$. Hilbert reciprocity therefore gives
    \[
    (\alpha,\alpha)_{v_2}=1.
    \]
    Thus $(\alpha,\alpha)_v=1$ at every finite place, so $\epsilon_v(q)=\epsilon_v(q')$. Again $q\otimes_k k_v\cong q'\otimes_k k_v$ for every finite $v$, and the associated spin groups are $k_v$-isomorphic.
\end{proof}
\begin{proposition}\label{prop:Spin}
    In the setting of Lemma~\ref{lem:Spin}, there are natural homomorphisms
    \[
    \mathbf G(k)\longrightarrow \mathrm{Spin}(m,n)^{r_-}\times\mathrm{Spin}(d,\mathbb C)^{r_c},
    \qquad
    \mathbf G'(k)\longrightarrow \mathrm{Spin}(m',n')^{r_-}\times\mathrm{Spin}(d,\mathbb C)^{r_c},
    \]
    obtained from the diagonal archimedean embedding followed by projection away from the compact factors. Moreover, there exist {irreducible} torsion-free cocompact arithmetic lattices
    \[
    \Gamma<\mathrm{Spin}(m,n)^{r_-}\times\mathrm{Spin}(d,\mathbb C)^{r_c},
    \qquad
    \Gamma'<\mathrm{Spin}(m',n')^{r_-}\times\mathrm{Spin}(d,\mathbb C)^{r_c},
    \]
    with $\widehat\Gamma\cong\widehat{\Gamma'}$.
\end{proposition}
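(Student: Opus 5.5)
We will apply Lemma~\ref{lem:CSP} and Corollary~\ref{cor:CSP} to $\mathbf G=\mathrm{Spin}(q)$ and $\mathbf G'=\mathrm{Spin}(q')$ over $k=\mathbb Q(\alpha)$. The archimedean part is computed first. At a real place $v$ where $\sigma_v(\alpha)>0$, the form $q$ becomes positive definite, so $\mathbf G(k_v)\cong\mathrm{Spin}(d)$ is compact. At a real place with $\sigma_v(\alpha)<0$ we have $q\otimes k_v\cong\langle1^{[m]},(-1)^{[n]}\rangle$, so $\mathbf G(k_v)\cong\mathrm{Spin}(m,n)$. At a complex place $\mathbf G(k_v)\cong\mathrm{Spin}(d,\mathbb C)$. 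The same holds for $q'$ with $(m',n')$. This gives the stated homomorphisms: the diagonal embedding $\mathbf G(k)\to\prod_{v\in V_\infty}\mathbf G(k_v)$ followed by projection away from the $r_+\ge1$ compact factors $\mathrm{Spin}(d)$. Since $r_+\geq1$, the compact part $G^{\mathrm c}_\infty$ is nontrivial, which is what will give cocompactness in Corollary~\ref{cor:CSP}. Noncompactness of $G_\infty$ follows from $r_-\ge1$ and $m,n\ge1$ (or $r_c\ge1$).

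Next we verify the hypotheses of Lemma~\ref{lem:CSP}. The finite-place isomorphisms $\mathbf G\times_k k_v\cong\mathbf G'\times_k k_v$ are exactly Lemma~\ref{lem:Spin}. Since $d\ge5$, the groups are simply connected and absolutely almost simple of type $B_\ell$ ($\ell\ge2$) or $D_\ell$. The field $k$ is not totally imaginary because $r_+\ge1$. The real rank of $\mathrm{Spin}(m,n)$ is $\min(m,n)$ and the rank of $\mathrm{Spin}(d,\mathbb C)$ over $\mathbb C$ is $\lfloor d/2\rfloor$. So the total archimedean rank is $r_-\min(m,n)+r_c\lfloor d/2\rfloor\ge2$ by hypothesis, and likewise for $q'$. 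Theorem~\ref{congruencePro} then gives CSP.

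The main obstacle is the type $D_4$ case ($d=8$), which Theorem~\ref{congruencePro} does not cover. Here I would either invoke the known CSP results for isotropic groups of higher rank (Raghunathan's theorem) or assume $d\neq8$ / cite the appropriate extension. In the case $d=8$ with some $\mathrm{Spin}(m,n)$ isotropic, Raghunathan's isotropic CSP theorem applies. The remaining anisotropic-over-$k$ case requires the CSP results of Rapinchuk and others for type $D_4$ forms coming from quadratic forms, where no triality is involved.

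We must also check that $\mathbf G\not\cong\mathbf G'$ over $k$. If $\{m,n\}\neq\{m',n'\}$, the real forms $\mathrm{Spin}(m,n)$ and $\mathrm{Spin}(m',n')$ differ at a place where $\alpha<0$. With the hypotheses of Lemma~\ref{lem:CSP} verified, that lemma produces torsion-free arithmetic $\Gamma_1<\mathbf G(k)$ and $\Gamma_2<\mathbf G'(k)$ with isomorphic profinite completions. Corollary~\ref{cor:CSP} identifies them with irreducible cocompact lattices $\Gamma,\Gamma'$ in $\mathrm{Spin}(m,n)^{r_-}\times\mathrm{Spin}(d,\mathbb C)^{r_c}$ and $\mathrm{Spin}(m',n')^{r_-}\times\mathrm{Spin}(d,\mathbb C)^{r_c}$ respectively, with $\widehat\Gamma\cong\widehat{\Gamma'}$.
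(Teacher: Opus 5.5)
Your outline (archimedean computation, Lemma~\ref{lem:CSP}, Corollary~\ref{cor:CSP}, projection away from $\mathrm{Spin}(d)^{r_+}$) matches the paper's proof. However, the congruence subgroup step has a genuine gap.

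Theorem~\ref{congruencePro} covers type $D_\ell$ only for $\ell\ge5$. It therefore fails for $d=8$ (type $D_4$), which you flag. It also fails for $d=6$, where $\mathrm{Spin}(q)$ has type $D_3=A_3$, and you do not address that case. Both cases are genuinely allowed by Theorem~\ref{thm:non_rigidity}(1). For example, $(m,n)=(3,3)$ versus $(1,5)$ with $r_-=2$, or $(1,7)$ versus $(5,3)$.

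Your patch for $d=8$ does not work either. Raghunathan's theorem needs $\mathbf G$ to be $k$-isotropic, not merely isotropic at a real place. In this setting $\mathbf G$ is never $k$-isotropic, because $r_+\ge1$ makes $q$ positive definite at some real place. Equivalently, a $k$-split torus would give a noncompact closed subgroup of the compact group $\mathbf G(k_v)\cong\mathrm{Spin}(d)$. This anisotropy is exactly what you use for cocompactness, so your case split collapses. All of $d=8$ then rests on an unspecified appeal to ``Rapinchuk and others''. Assuming $d\neq8$ would not prove the proposition as stated.

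The missing ingredient is the one the paper uses: Kneser's congruence subgroup theorem for spin groups of nondegenerate quadratic forms in at least $5$ variables with archimedean rank at least $2$. It applies uniformly to every $d\ge5$, whether or not $\mathbf G$ is $k$-isotropic. In particular it handles $d=6$ and $d=8$, since these groups come from quadratic forms and no triality forms of $D_4$ occur. Replace your appeal to Theorem~\ref{congruencePro} with Kneser's theorem, and the rest of your argument goes through as in the paper.
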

\begin{proof}
    By Lemma~\ref{lem:Spin}, the spin $k$-groups $\mathbf G=\mathrm{Spin}(q)$ and $\mathbf G'=\mathrm{Spin}(q')$ are isomorphic at every finite place. As spin groups of quadratic forms of dimension at least $5$ and have real rank $\geq 2$, Kneser's congruence subgroup theorem \cite{GDZPPN002196646} implies that both $\mathbf G$ and $\mathbf G'$ have CSP. Lemma~\ref{lem:CSP} therefore yields torsion-free arithmetic subgroups
    \[
    \widetilde\Gamma<\mathbf G(k),
    \qquad
    \widetilde\Gamma'<\mathbf G'(k),
    \]
    with isomorphic profinite completions.

    By the Borel--Harish-Chandra theorem (Theorem~\ref{KK2.5a}), $\widetilde\Gamma$ is a lattice in
    \[
    \mathbf G(k\otimes_\mathbb Q\mathbb R)
    \cong\prod_{v\in V_\infty(k)}\mathrm{Spin}(q\otimes_k k_v).
    \]
    At a real place $v$ with $\sigma_v(\alpha)>0$, the form $q$ is positive definite and the corresponding factor is $\mathrm{Spin}(d)$. At a real place with $\sigma_v(\alpha)<0$, the factor is $\mathrm{Spin}(m,n)$. At a complex place, the factor is $\mathrm{Spin}(d,\mathbb C)$. Hence
    \[
    \mathbf G(k\otimes_\mathbb Q\mathbb R)
    \cong
    \mathrm{Spin}(d)^{r_+}\times
    \mathrm{Spin}(m,n)^{r_-}\times
    \mathrm{Spin}(d,\mathbb C)^{r_c}.
    \]
    Project away from the compact factor $\mathrm{Spin}(d)^{r_+}$ and put
    \[
    \Gamma=\operatorname{pr}(\widetilde\Gamma).
    \]
    The kernel of $\operatorname{pr}|_{\widetilde\Gamma}$ is finite because it is a discrete subgroup of a compact group. Since $\widetilde\Gamma$ is torsion-free, this kernel is trivial. Thus $\Gamma\cong\widetilde\Gamma$. Because $r_+\geq 1$, Corollary \ref{cor:CSP} gives cocompactness and irreducibility. The same argument for $\mathbf G'$ produces $\Gamma'\cong\widetilde\Gamma'$. Therefore $\widehat\Gamma\cong\widehat{\Gamma'}$.
\end{proof}
\begin{lemma}\label{lem:SU}
    Let $\alpha$ be an algebraic unit, put $k=\mathbb Q(\alpha)$, let $a\in k^\times$ be a nonsquare, and let $E=k(\sqrt a)$ be the corresponding quadratic field extension. Suppose that there are $r_s$ real places with $\sigma_v(a)>0$, $r_-$ real places with $\sigma_v(a)<0$ and $\sigma_v(\alpha)<0$, $r_+$ real places with $\sigma_v(a)<0$ and $\sigma_v(\alpha)>0$, and $r_c$ complex places. Assume $r_+\geq1$ and that the remaining parameters satisfy Theorem~\ref{thm:non_rigidity}(2), with the divisibility conditions restricted to those involving $(m-m')$. Define the $E/k$-Hermitian forms
    \[
    h=\langle1^{[m]},\alpha^{[n]}\rangle,
    \qquad
    h'=\langle1^{[m']},\alpha^{[n']}\rangle,
    \]
    and let $\mathbf G=\mathrm{SU}(h,E/k)$ and $\mathbf G'=\mathrm{SU}(h',E/k)$. Then
    \[
    \mathbf G\times_k k_v\cong\mathbf G'\times_k k_v
    \]
    for every finite place $v$ of $k$.
\end{lemma}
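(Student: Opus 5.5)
The plan is to argue place by place, mirroring the proof of Lemma~\ref{lem:Spin}, with the classification of quadratic forms replaced by the classification of Hermitian forms over local fields. Fix a finite place $v$ of $k$ and put $E_v:=E\otimes_k k_v$. There are two cases, according to whether $v$ splits in $E$ or not. In each case I will show that either the $k_v$-groups $\mathbf G\times_k k_v$ and $\mathbf G'\times_k k_v$ are both isomorphic to a standard group, or the Hermitian forms $h\otimes_k k_v$ and $h'\otimes_k k_v$ are isometric, which in turn gives $\mathrm{SU}(h)\times_k k_v\cong \mathrm{SU}(h')\times_k k_v$.

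\emph{Split case.} Suppose $v$ splits in $E$, so that $E_v\cong k_v\times k_v$ with the nontrivial involution swapping the factors. A nondegenerate Hermitian form over $E_v$ is then determined by an invertible matrix $A\in\mathrm{GL}_d(k_v)$, namely the first component; the second component is forced to be its transpose. The unitary group is identified with $\mathrm{GL}_d(k_v)$, acting via $g\mapsto (g,(g^{t})^{-1})$ up to conjugation by $A$. Under this identification the special unitary group becomes $\mathrm{SL}_d$ over $k_v$, independently of the form. This works functorially in $k_v$-algebras, so
\[
\mathbf G\times_k k_v\cong \mathrm{SL}_d\cong \mathbf G'\times_k k_v .
\]

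\emph{Nonsplit case.} Suppose $v$ is inert or ramified in $E$, so that $E_v$ is a quadratic field extension of the non-archimedean local field $k_v$. I will invoke the classical theorem of Landherr and Jacobowitz: nondegenerate Hermitian forms over $E_v/k_v$ are classified up to isometry by their dimension and by their discriminant in $k_v^\times/N_{E_v/k_v}(E_v^\times)$. Here both forms have dimension $d$, with
\[
\det h=\alpha^{n},\qquad \det h'=\alpha^{n'} .
\]
Since $m+n=m'+n'$ and $2\mid (m-m')$, we also have $2\mid(n-n')$. Hence $\det h/\det h'=\alpha^{n-n'}$ is a square in $k$, and a square is a norm from $E_v$. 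Therefore $h\otimes_k k_v\cong h'\otimes_k k_v$, and consequently $\mathrm{SU}(h)\times_k k_v\cong \mathrm{SU}(h')\times_k k_v$.

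Note that, unlike the Spin case, no Hilbert reciprocity argument is needed here, and the unit hypothesis on $\alpha$ plays no essential role. The conditions $r_+\ge1$ and the rank inequality are not used in this lemma; they only enter later, when cocompactness and the congruence subgroup property are applied.

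The main obstacle is not conceptual. It lies in correctly invoking the local classification of Hermitian forms, including at dyadic and ramified places; Jacobowitz's theorem covers these uniformly. It also requires handling the split places carefully, where $\mathrm{SU}(h)$ should be identified with $\mathrm{SL}_d$ as a $k_v$-group scheme rather than merely on $k_v$-points. For that identification I would write $\mathrm{SU}(h)$ as the kernel of the reduced norm restricted to the unitary group of $(M_d(E_v),\tau_h)$, and use that for $E_v=k_v\times k_v$ this algebra with involution is $M_d(k_v)\times M_d(k_v)^{\mathrm{op}}$ with the exchange involution.
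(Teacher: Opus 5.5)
Your proof is correct and follows the paper's argument essentially verbatim. At split places both groups are $\mathrm{SL}_d$ independently of the form. At nonsplit places the classification of Hermitian forms by dimension and determinant in $k_v^\times/N_{E_v/k_v}(E_v^\times)$ reduces everything to the fact that $\alpha^{n-n'}$ is a square, hence a norm. Your extra remarks are accurate refinements rather than a different route: you do the group-scheme-level identification at split places via $M_d(k_v)\times M_d(k_v)^{\mathrm{op}}$, and you note that neither the unit hypothesis nor $r_+\ge 1$ is used in this lemma.
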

\begin{proof}
    If $v$ splits in $E/k$, then $E_v\cong k_v\times k_v$, and
    \[
    \mathrm{SU}(h,E_v/k_v)\cong\mathrm{SL}(d,k_v),
    \]
    independently of the Hermitian form $h$.

    Suppose that $v$ is nonsplit, so $E_v/k_v$ is a quadratic field extension. Hermitian forms over $E_v/k_v$ are classified by their dimension and determinant class in
    \[
    k_v^\times/N_{E_v/k_v}(E_v^\times).
    \]
    Since $\mathrm{SU}(m',n')\cong\mathrm{SU}(n',m')$, we may interchange $m'$ and $n'$ and assume $2\mid(m-m')$. Then $n-n'$ is even, so
    \[
    \frac{\det(h)}{\det(h')}=\alpha^{n-n'}
    \]
    is a square in $k_v^\times$, hence a norm from $E_v^\times$. Thus $h$ and $h'$ have the same local determinant invariant, and the corresponding special unitary groups are $k_v$-isomorphic.
\end{proof}
\begin{proposition}\label{prop:SU}
    In the setting of Lemma~\ref{lem:SU}, there are natural homomorphisms
    \[
    \mathbf G(k)\longrightarrow
    \mathrm{SU}(m,n)^{r_-}\times\mathrm{SL}(d,\mathbb R)^{r_s}\times\mathrm{SL}(d,\mathbb C)^{r_c},
    \]
    \[
    \mathbf G'(k)\longrightarrow
    \mathrm{SU}(m',n')^{r_-}\times\mathrm{SL}(d,\mathbb R)^{r_s}\times\mathrm{SL}(d,\mathbb C)^{r_c},
    \]
    and {irreducible} torsion-free cocompact arithmetic lattices in these two Lie groups whose profinite completions are isomorphic.
\end{proposition}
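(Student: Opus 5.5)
The plan is to mirror the proof of Proposition~\ref{prop:Spin}, replacing Lemma~\ref{lem:Spin} by Lemma~\ref{lem:SU}. By Lemma~\ref{lem:SU}, $\mathbf G=\mathrm{SU}(h,E/k)$ and $\mathbf G'=\mathrm{SU}(h',E/k)$ are isomorphic at every finite place of $k$. Both are simply connected and absolutely almost simple of type $A_{d-1}$ with $d\ge 3$, since they are $k$-forms of $\mathrm{SL}_d$ split by $E$.

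Next I identify the archimedean factors place by place. At a real place $v$ with $\sigma_v(a)>0$, the extension $E$ splits, so $E_v\cong\mathbb R\times\mathbb R$ and the factor is $\mathrm{SL}(d,\mathbb R)$. At a real place with $\sigma_v(a)<0$, we have $E_v=\mathbb C$. There the form $h$ has signature $(m,n)$ if $\sigma_v(\alpha)<0$, giving $\mathrm{SU}(m,n)$. If $\sigma_v(\alpha)>0$, the form is definite and the factor is the compact group $\mathrm{SU}(d)$. At a complex place, $E_v\cong\mathbb C\times\mathbb C$ and the factor is $\mathrm{SL}(d,\mathbb C)$. Hence
\[
\mathbf G(k\otimes_{\mathbb Q}\mathbb R)\cong \mathrm{SU}(d)^{r_+}\times\mathrm{SU}(m,n)^{r_-}\times\mathrm{SL}(d,\mathbb R)^{r_s}\times\mathrm{SL}(d,\mathbb C)^{r_c},
\]
and similarly for $\mathbf G'$ with $(m',n')$. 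The hypothesis $r_-\min(m,n,m',n')+(r_s+r_c)(d-1)\ge 2$ says exactly that the total archimedean rank $\sum_v\operatorname{rank}_{k_v}$ is at least $2$ for both groups. In particular $G_{\infty}$ is noncompact.

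The main obstacle is the congruence subgroup property. Theorem~\ref{congruencePro} does not cover type $A$, so it must be imported separately. For a $k$-group $\mathrm{SU}(h)$ of an $E/k$-Hermitian form with $d\ge3$ and archimedean rank at least $2$, CSP is known from the work on unitary groups. For the inner/outer type $A$ forms arising from Hermitian forms over a quadratic extension, the metaplectic kernel computations of Prasad--Rapinchuk, together with Raghunathan's theorem for isotropic groups, show that the congruence kernel is finite, provided $k$ has a real place. Here $r_+\ge1$ ensures that. I would cite this result, checking that the congruence kernel is finite rather than trivial, which is all that Theorem~\ref{CSP} needs.

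With CSP in hand, Lemma~\ref{lem:CSP} produces torsion-free arithmetic subgroups $\widetilde\Gamma<\mathbf G(k)$ and $\widetilde\Gamma'<\mathbf G'(k)$ with isomorphic profinite completions. Projecting away the compact factor $\mathrm{SU}(d)^{r_+}$ is injective on these torsion-free discrete subgroups. Corollary~\ref{cor:CSP}, applied with $r_+\ge1$, then gives irreducible cocompact lattices $\Gamma\cong\widetilde\Gamma$ and $\Gamma'\cong\widetilde\Gamma'$ in the stated Lie groups, with $\widehat\Gamma\cong\widehat{\Gamma'}$.
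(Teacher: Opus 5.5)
Your architecture is the paper's: Lemma~\ref{lem:SU} at the finite places, the congruence subgroup property for both groups, Lemma~\ref{lem:CSP}, the same place-by-place identification of $\mathbf G(k\otimes_{\mathbb Q}\mathbb R)$, and projection away from $\mathrm{SU}(d)^{r_+}$ via Corollary~\ref{cor:CSP}. One small point: the hypothesis $r_-\min(m,n,m',n')+(r_s+r_c)(d-1)\ge 2$ \emph{implies} archimedean rank at least $2$ for both groups, but it is not equivalent to it.

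You diverge from the paper only at the congruence subgroup property, and there your justification fails. The paper simply appeals to \cite[Theorem~2.4]{kammeyer2023profinite} for these groups of type ${}^2A_{d-1}$. You correctly observe that Theorem~\ref{congruencePro}, as recorded in the paper, does not list type $A$. However, the substitute you propose cannot work. Raghunathan's centrality theorem for the congruence kernel is about $k$-isotropic groups, whereas $\mathbf G$ and $\mathbf G'$ are $k$-anisotropic. A $k$-split torus would stay split at every place, yet $\mathbf G(k_v)\cong\mathrm{SU}(d)$ is compact at each of the $r_+\ge 1$ places. This is forced by the statement itself: an arithmetic lattice is cocompact exactly when the group is $k$-anisotropic (Godement's criterion; compare Theorem~\ref{KK2.5a}). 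So no CSP theorem for isotropic groups can ever be used in this proposition. The Prasad--Rapinchuk metaplectic kernel computations do not close the gap on their own either. They identify the congruence kernel with the dual of the metaplectic kernel only \emph{after} the congruence kernel is known to be central.

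The missing ingredient is therefore a centrality theorem for the congruence kernel of $k$-anisotropic groups $\mathrm{SU}(h,E/k)$ of type ${}^2A_{d-1}$, with $d\ge 3$ and $\sum_{v\in V_\infty(k)}\operatorname{rank}_{k_v}\mathbf G\ge 2$. This belongs to the anisotropic-case results of Rapinchuk and Tomanov; compare \cite{Rap99}, which the paper itself invokes for the quaternionic unitary groups in Proposition~\ref{prop:slh}. Once centrality is established, finiteness of the metaplectic kernel gives CSP in the sense needed for Theorem~\ref{CSP}. The rest of your proof then goes through exactly as in the paper.
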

\begin{proof}
    The groups $\mathbf G$ and $\mathbf G'$ are simply connected absolutely almost simple outer $k$-groups of absolute type $A_{d-1}$. Since $k$ is not totally imaginary and the groups are assumed to have archimedean rank at least $2$ in Theorem~\ref{thm:non_rigidity}(2), Kammeyer and Kionke's congruence subgroup theorem \cite[Theorem~2.4]{kammeyer2023profinite} applies to these special unitary groups of type ${}^2A_{d-1}$. Lemmas \ref{lem:CSP} and \ref{lem:SU} therefore give torsion-free arithmetic subgroups of $\mathbf G(k)$ and $\mathbf G'(k)$ with isomorphic profinite completions.

    The archimedean factors are as follows. At a complex place the factor is $\mathrm{SL}(d,\mathbb C)$. At a real place with $\sigma_v(a)>0$, the quadratic algebra $E_v$ is split and the factor is $\mathrm{SL}(d,\mathbb R)$. At a real place with $\sigma_v(a)<0$, one has $E_v\cong\mathbb C$; the factor is $\mathrm{SU}(m,n)$ when $\sigma_v(\alpha)<0$ and the compact group $\mathrm{SU}(d)$ when $\sigma_v(\alpha)>0$. Consequently,
    \[
    \mathbf G(k\otimes_\mathbb Q\mathbb R)
    \cong
    \mathrm{SU}(m,n)^{r_-}\times\mathrm{SU}(d)^{r_+}\times
    \mathrm{SL}(d,\mathbb R)^{r_s}\times\mathrm{SL}(d,\mathbb C)^{r_c}.
    \]
    The same formula holds for $\mathbf G'$ with $(m,n)$ replaced by $(m',n')$. Projecting away the compact $\mathrm{SU}(d)^{r_+}$ factors is injective on the torsion-free arithmetic subgroups and preserves irreducibility and cocompactness, exactly as in Proposition~\ref{prop:Spin}. This proves the proposition.
\end{proof}
\begin{lemma}\label{lem:Sp}
    Let $\alpha$ be an algebraic unit, put $k=\mathbb Q(\alpha)$, and let $B=(a,b)_k$ be a quaternion algebra. Suppose that there are $r_s$ real places at which $B$ splits, $r_-$ real places at which $B$ is ramified and $\sigma_v(\alpha)<0$, $r_+$ real places at which $B$ is ramified and $\sigma_v(\alpha)>0$, and $r_c$ complex places. Assume $r_+\geq1$ and that the remaining parameters satisfy Theorem~\ref{thm:non_rigidity}(3). Define Hermitian forms with respect to the canonical involution on $B$ by
    \[
    h=\langle1^{[m]},\alpha^{[n]}\rangle,
    \qquad
    h'=\langle1^{[m']},\alpha^{[n']}\rangle,
    \]
    and let $\mathbf G=\mathrm{Sp}(h,B/k)$ and $\mathbf G'=\mathrm{Sp}(h',B/k)$. Then
    \[
    \mathbf G\times_k k_v\cong\mathbf G'\times_k k_v
    \]
    for every finite place $v$ of $k$.
\end{lemma}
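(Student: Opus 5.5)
Following the same local-classification strategy as Lemmas \ref{lem:Spin} and \ref{lem:SU}, we compare the two Hermitian forms place by place. We split the finite places $v$ of $k$ according to whether $B$ splits or ramifies at $v$.

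\emph{Case 1: $B_v:=B\otimes_k k_v\cong M_2(k_v)$.} By Morita equivalence, a Hermitian form of rank $d$ over $(B_v,\text{canonical involution})$ corresponds to a $2d$-dimensional alternating form over $k_v$. The canonical involution on $M_2(k_v)$ is the symplectic (adjugate) involution, so this form is nondegenerate alternating. Nondegenerate alternating forms of a given dimension are all isometric. Hence
\[
\mathrm{Sp}(h,B_v/k_v)\cong \mathrm{Sp}(2d,k_v)\cong \mathrm{Sp}(h',B_v/k_v),
\]
independently of $m,n,m',n'$. An equivalent way to see this is to diagonalize: the scalar $\alpha$ acts on a rank-one form, and $\langle\alpha\rangle$ corresponds to the symplectic plane scaled by $\alpha$, which is isometric to the standard one.

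\emph{Case 2: $B_v$ is the quaternion division algebra over the non-archimedean field $k_v$.} Here we invoke the classical theorem of Jacobowitz (see also Scharlau, Ch.~10): over a local non-archimedean field, Hermitian forms with respect to the canonical involution on a quaternion division algebra are classified by their rank alone. The key input is that the reduced norm $\mathrm{Nrd}\colon B_v^\times\to k_v^\times$ is surjective at finite places. Consequently every rank-one form $\langle c\rangle$ with $c\in k_v^\times$ is isometric to $\langle 1\rangle$: writing $c=\mathrm{Nrd}(x)=x\bar x$ gives $\langle c\rangle\cong\langle 1\rangle$. Applying this to each diagonal entry shows $h\cong\langle 1^{[d]}\rangle\cong h'$ over $B_v$, and hence $\mathbf G_v\cong\mathbf G'_v$. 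This step uses only surjectivity of the reduced norm, so no determinant or Hasse-type invariant needs to be compared. This is consistent with Theorem~\ref{thm:non_rigidity}(3) imposing no divisibility condition on $m-m'$.

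\emph{Conclusion.} In both cases $\mathrm{Sp}(h,B_v/k_v)\cong\mathrm{Sp}(h',B_v/k_v)$, because isometric Hermitian forms give isomorphic groups of isometries. The simply connected groups are exactly these isometry groups, as $\mathrm{Sp}(h)$ is already connected and simply connected of type $C_d$.

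\emph{Main obstacle.} The step needing the most care is Case 1, specifically checking that the Morita correspondence sends the canonical (symplectic-type) involution to alternating rather than symmetric forms. If the correspondence went the other way, the split places would instead carry orthogonal groups depending on discriminants. I would verify this directly by the rank-one computation, checking that $\langle 1\rangle$ over $M_2(k_v)$ with the adjugate involution corresponds to the standard symplectic plane.
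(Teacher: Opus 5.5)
Your proof is correct and follows the paper's argument: at split finite places, Morita equivalence identifies both groups with $\mathrm{Sp}(2d,k_v)$ regardless of the form. At ramified finite places, surjectivity of $\operatorname{Nrd}\colon B_v^\times\to k_v^\times$ makes each rank-one entry $\langle c\rangle$ isometric to $\langle 1\rangle$, so $h_v\cong h'_v$. Your explicit rank-one rescaling, and your check that the canonical involution on $M_2(k_v)$ is symplectic so that the transferred form is alternating, spell out details the paper leaves implicit.
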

\begin{proof}
    If $B_v$ is split, then $B_v\cong M_2(k_v)$, and Morita equivalence gives
    \[
    \mathrm{Sp}(h,B_v/k_v)\cong\mathrm{Sp}(2d,k_v),
    \]
    independently of $h$.

    If $B_v$ is a division, the reduced norm map
    \[
    \operatorname{Nrd}:B_v^\times\longrightarrow k_v^\times
    \]
    is surjective. Hence, all nondegenerate Hermitian forms of a fixed dimension over $B_v$ with respect to the canonical involution are isometric. In particular $h\otimes_k k_v\cong h'\otimes_k k_v$, and the corresponding $k_v$-groups are isomorphic.
\end{proof}
\begin{proposition}\label{prop:Sp}
    In the setting of Lemma~\ref{lem:Sp}, there are natural homomorphisms
    \[
    \mathbf G(k)\longrightarrow
    \mathrm{Sp}(m,n)^{r_-}\times\mathrm{Sp}(2d,\mathbb R)^{r_s}\times\mathrm{Sp}(2d,\mathbb C)^{r_c},
    \]
    \[
    \mathbf G'(k)\longrightarrow
    \mathrm{Sp}(m',n')^{r_-}\times\mathrm{Sp}(2d,\mathbb R)^{r_s}\times\mathrm{Sp}(2d,\mathbb C)^{r_c},
    \]
    and {irreducible} torsion-free cocompact arithmetic lattices in these two Lie groups whose profinite completions are isomorphic.
\end{proposition}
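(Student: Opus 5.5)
The plan is to copy the proofs of Propositions~\ref{prop:Spin} and~\ref{prop:SU}, with Lemma~\ref{lem:Sp} giving the local input.

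\textbf{Step 1: CSP and adelic matching.} By Lemma~\ref{lem:Sp}, $\mathbf G=\mathrm{Sp}(h,B/k)$ and $\mathbf G'=\mathrm{Sp}(h',B/k)$ are isomorphic at every finite place of $k$. Both are simply connected and absolutely almost simple of absolute type $C_d$ with $d\ge 2$: they are inner forms of $\mathrm{Sp}_{2d}$.

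Since $r_+\ge 1$, the field $k$ has a real place, so it is not totally imaginary. Next we check the archimedean rank. The real rank of $\mathrm{Sp}(m,n)$ is $\min(m,n)$, that of $\mathrm{Sp}(2d,\mathbb R)$ is $d$, and $\mathrm{Sp}(2d,\mathbb C)$ has rank $d$ over $k_v=\mathbb C$. The hypothesis $r_-\min(m,n,m',n')+(r_s+r_c)d\ge 2$ therefore gives
\[
\sum_{v\in V_\infty(k)}\operatorname{rank}_{k_v}\ge 2
\]
for both $\mathbf G$ and $\mathbf G'$. Theorem~\ref{congruencePro} then gives CSP for both groups, and both $G_{\infty}$ and $G'_{\infty}$ are noncompact. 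Lemma~\ref{lem:CSP} now produces torsion-free arithmetic subgroups $\widetilde\Gamma<\mathbf G(k)$ and $\widetilde\Gamma'<\mathbf G'(k)$ with $\widehat{\widetilde\Gamma}\cong\widehat{\widetilde\Gamma'}$.

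\textbf{Step 2: identify the archimedean factors.}
\begin{itemize}
\item At a real place where $B$ splits, Morita equivalence gives $\mathrm{Sp}(2d,\mathbb R)$.
\item At a complex place the factor is $\mathrm{Sp}(2d,\mathbb C)$.
\item At a real place where $B$ ramifies, $B_v\cong\mathbb H$ and $h$ becomes a quaternionic Hermitian form $\langle 1^{[m]},\sigma_v(\alpha)^{[n]}\rangle$. Its group is $\mathrm{Sp}(m,n)$ if $\sigma_v(\alpha)<0$, and the compact group $\mathrm{Sp}(d)$ if $\sigma_v(\alpha)>0$.
\end{itemize}
Hence
\[
\mathbf G(k\otimes_{\mathbb Q}\mathbb R)\cong \mathrm{Sp}(m,n)^{r_-}\times\mathrm{Sp}(d)^{r_+}\times\mathrm{Sp}(2d,\mathbb R)^{r_s}\times\mathrm{Sp}(2d,\mathbb C)^{r_c},
\]
and the same holds for $\mathbf G'$ with $(m',n')$ in place of $(m,n)$.

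\textbf{Step 3: project away the compact factor.} We project away $\mathrm{Sp}(d)^{r_+}$. The kernel of this projection restricted to $\widetilde\Gamma$ is a discrete subgroup of a compact group, hence finite, hence trivial because $\widetilde\Gamma$ is torsion-free. So the projection is injective on $\widetilde\Gamma$ and on $\widetilde\Gamma'$. Since $r_+\ge1$, Corollary~\ref{cor:CSP} gives that the images $\Gamma$ and $\Gamma'$ are irreducible cocompact lattices, and they still satisfy $\widehat\Gamma\cong\widehat{\Gamma'}$.

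\textbf{Main obstacle.} The step that needs the most care is confirming that Theorem~\ref{congruencePro} applies. One must check that these quaternionic unitary groups really are simply connected, absolutely almost simple groups of type $C_d$, which holds for the canonical involution with $d\ge 2$. One must also compute the real rank of $\mathrm{Sp}(m,n)$ correctly as $\min(m,n)$.
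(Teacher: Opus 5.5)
Your proposal is correct and follows essentially the same route as the paper. You get CSP from Theorem~\ref{congruencePro}, using that $k$ has a real place and the archimedean rank is at least $2$, and then obtain the profinite isomorphism from Lemma~\ref{lem:CSP} together with Lemma~\ref{lem:Sp}. You identify the archimedean factors place by place and project away the compact $\mathrm{Sp}(d)^{r_+}$ factor via Corollary~\ref{cor:CSP}. Your explicit rank check, $r_-\min(m,n)+(r_s+r_c)d\ge r_-\min(m,n,m',n')+(r_s+r_c)d\ge 2$, spells out a step the paper only asserts.
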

\begin{proof}
    The groups $\mathbf G$ and $\mathbf G'$ have absolute type $C_d$. Since $k$ is not totally imaginary and the groups are assumed to have archimedean rank at least $2$ in Theorem~\ref{thm:non_rigidity}(3), Kammeyer and Kionke's congruence subgroup theorem \cite[Theorem 2.4]{kammeyer2023profinite} applies to these groups of type $C_d$. Lemmas~\ref{lem:CSP} and \ref{lem:Sp} therefore give torsion-free arithmetic subgroups with isomorphic profinite completions.

    At a real place where $B$ splits, Morita equivalence gives the split factor $\mathrm{Sp}(2d,\mathbb R)$. At a real place where $B$ is ramified, the sign of $\sigma_v(\alpha)$ gives $\mathrm{Sp}(m,n)$ or the compact form $\mathrm{Sp}(d)$. At a complex place, the factor is $\mathrm{Sp}(2d,\mathbb C)$. Thus
    \[
    \mathbf G(k\otimes_\mathbb Q\mathbb R)
    \cong
    \mathrm{Sp}(m,n)^{r_-}\times\mathrm{Sp}(d)^{r_+}\times
    \mathrm{Sp}(2d,\mathbb R)^{r_s}\times\mathrm{Sp}(2d,\mathbb C)^{r_c},
    \]
    and similarly for $\mathbf G'$. Projection away from the compact factors is injective on the torsion-free arithmetic lattices and preserves irreducibility and cocompactness, so the conclusion follows.
\end{proof}
We are now ready to prove Theorem~\ref{thm:non_rigidity}:

\begin{proof}[Proof of Theorem~\ref{thm:non_rigidity}]
    Since $\mathrm{Spin}(m',n')\cong\mathrm{Spin}(n',m')$, we may interchange $m'$ and $n'$ and assume that $4\mid(m-m')$ in the first alternative of Theorem~\ref{thm:non_rigidity}(1), or $2\mid(m-m')$ in the second. Similarly, since $\mathrm{SU}(m',n')\cong\mathrm{SU}(n',m')$, we assume $2\mid (m-m')$ in Theorem~\ref{thm:non_rigidity}(2).

    (1) Fix an auxiliary integer $r_+\geq1$. Let
    \[
    D=r_-+r_++2r_c.
    \]
    Choose a polynomial $g(x)\in\mathbb Z[x]$ with $r_+-1$ distinct positive real roots, $r_--1$ distinct negative real roots, and $r_c$ distinct pairs of nonreal conjugate roots. Choose an irreducible polynomial $\overline f(x)\in\mathbb F_2[x]$ of degree $D$, and a monic lift $f_0(x)\in\mathbb Z[x]$ with $f_0(0)=(-1)^{r_+}$. Set
    \[
    f_N(x)=f_0(x)+2Nxg(x).
    \]
    Then $(2N)^{-1}f_N(x)\to xg(x)$ coefficientwise away from the leading term as $N\to\infty$. For all sufficiently large $N$, the polynomial $f_N$ has $r_+-1$ positive roots, $r_--1$ negative roots, $r_c$ pairs of nonreal roots, one additional real root near $0$, and one further real root of large absolute value; all roots are distinct. Since $f_N$ is monic and
    \[
    (-1)^D f_N(0)=(-1)^{r_-},
    \]
    the two additional real roots have opposite signs. Hence $f_N$ has exactly $r_+$ positive roots, $r_-$ negative roots, and $r_c$ pairs of nonreal roots.

    The reduction of $f_N$ modulo $2$ is $\overline f$, so $f_N$ is irreducible over $\mathbb Q$. Let $\alpha$ be a root and put $k=\mathbb Q(\alpha)$. The embeddings $k\hookrightarrow\mathbb C$ are obtained by sending $\alpha$ to the roots of $f_N$, so $k$ has precisely the prescribed archimedean sign pattern. Since $\overline f$ is irreducible and separable, $(2)$ is prime in $\mathcal O_k$. Proposition~\ref{prop:Spin} now gives the required lattices.

    (2) Fix an auxiliary integer $r_+\geq1$. Construct $f$ in the same way, now with $r_++r_s$ positive roots, $r_-$ negative roots, and $r_c$ pairs of nonreal roots. Arrange the real roots so that some rational number $c$ lies below exactly $r_s$ of the positive roots and above every other real root. After clearing denominators, choose a linear polynomial $l(x)\in\mathbb Z[x]$ with the same sign pattern as $x-c$, and set $a=l(\alpha)$. Then $a$ is negative under some real embedding of $k$, so $a$ is not a square in $k$. The positive real places at which $a>0$ give the $r_s$ split real places of $E=k(\sqrt a)$; the remaining sign choices give the prescribed $r_+$ and $r_-$ places. Proposition~\ref{prop:SU} gives the required lattices.

    (3) Choose $\alpha$ with the required sign pattern as above, and choose a quaternion algebra $B/k$ which splits at the prescribed $r_s$ real places and is ramified at the prescribed $r_++r_-$ real places. Write $B=(a,b)_k$ for suitable $a,b\in k^\times$. Proposition~\ref{prop:Sp} then gives the required lattices.
\end{proof}
By modifying the proof of Lemma~\ref{lem:Spin} and Proposition~\ref{prop:Spin}, and allowing one additional coefficient in the quadratic forms, one obtains further profinitely isomorphic pairs of spin lattices.
\begin{proposition}\label{prop:spin_d}
    Let $d\geq6$ be even, and let $m,n,m',n'\geq2$ satisfy
    \[
    m+n=m'+n'=d,
    \qquad
    m\equiv n\pmod4,
    \qquad
    m'\equiv n'\pmod4,
    \qquad
    m-m'\equiv2\pmod4.
    \]
    Then there exist torsion-free cocompact arithmetic lattices
    \[
    \Gamma<\mathrm{Spin}(m,n),
    \qquad
    \Gamma'<\mathrm{Spin}(m',n'),
    \]
    with $\widehat\Gamma\cong\widehat{\Gamma'}$.
\end{proposition}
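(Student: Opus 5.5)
The plan is to repeat the strategy of Lemma~\ref{lem:Spin} and Proposition~\ref{prop:Spin}, but over a totally real field, and with $q'$ compared to $q$ only up to similarity. This is enough because $\mathrm{Spin}(q)\cong\mathrm{Spin}(cq)$ for every $c\in k^\times$.

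\textbf{Choice of field and forms.} Choose a totally real field $k\neq\mathbb Q$ and a unit $\alpha$ that is negative at exactly one real place $w_0$ and positive at all other real places. Such $\alpha$ exist by the polynomial construction in the proof of Theorem~\ref{thm:non_rigidity} with $r_-=1$ and $r_c=0$. Put
\[
q=\langle 1^{[m]},\alpha^{[n]}\rangle .
\]
Then $q$ has signature $(m,n)$ at $w_0$ and is definite at every other real place. Naively one would take $q'=\langle 1^{[m']},\alpha^{[n']}\rangle$. The determinants agree, since $n-n'$ is even. The Hasse invariants, however, differ by
\[
(\alpha,\alpha)_v^{(n-n')(n+n'-1)/2}=(\alpha,\alpha)_v ,
\]
because $n-n'\equiv 2\pmod 4$ and $n+n'$ is even. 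This discrepancy is $-1$ at $w_0$. By Hilbert reciprocity it is therefore also nontrivial at an odd number of finite places, so $q$ and $q'$ are not locally isometric everywhere.

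\textbf{Correcting the Hasse invariant by similarity.} Since $d$ is even, scaling a $d$-dimensional form by $c_v$ keeps its determinant class. It multiplies the Hasse invariant by
\[
(c_v,c_v)^{d(d-1)/2}(c_v,\det q)^{d-1}=\bigl(c_v,(-1)^{d/2}\det q\bigr)_v .
\]
Under the hypotheses $m\equiv n$, $m'\equiv n'\pmod 4$, the element $\delta:=(-1)^{d/2}\det q$ is, up to squares, $\pm\alpha^n$ with a sign we can compute. We will additionally arrange, by choice of $\alpha$, that $\delta$ is a nonsquare at each finite place where $(\alpha,\alpha)_v=-1$. Then some $c_v$ exists that flips the Hasse invariant there.

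\textbf{Constructing $q'$ globally.} We prescribe local data and then invoke the Hasse--Minkowski existence theorem for quadratic forms with given local invariants (see \cite{lam2005introduction}). At finite $v$ we require $q'_v\cong c_vq_v$, with $c_v=1$ except at the finitely many bad places. At $w_0$ we require signature $(m',n')$, and at the other real places we require definiteness. The global determinant is $\det q$. The existence theorem needs three checks: (i) the sign of $\det q$ at each real place matches the prescribed signatures, which holds since $n\equiv n'\pmod 2$; (ii) only finitely many local Hasse invariants are $-1$; (iii) their product is $1$. For (iii), the product over all places of the Hasse invariants of $q$ is $1$. We changed the invariant at $w_0$ and at an odd number of finite places, each by a factor $-1$, so the total product is still $1$. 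We must also check that at $w_0$ the required Hasse invariant change is exactly $-1$; this is where $m\equiv n$ and $m'\equiv n'\pmod 4$ enter, via the real Hasse invariant formula $(-1)^{n(n-1)/2}$.

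With $q'$ in hand, $\mathrm{Spin}(q)_v\cong\mathrm{Spin}(q')_v$ at every finite $v$. Kneser's CSP applies because $d\geq 6$ and the real rank is at least $2$, as $m,n,m',n'\geq2$. Lemma~\ref{lem:CSP} and Corollary~\ref{cor:CSP} then yield torsion-free arithmetic lattices with isomorphic profinite completions. After projecting away the compact factors at the other real places (which exist because $k\neq\mathbb Q$), these are cocompact lattices in $\mathrm{Spin}(m,n)$ and $\mathrm{Spin}(m',n')$.

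The main obstacle is the local bookkeeping at the dyadic and bad places. We must guarantee that $\delta$ is a local nonsquare wherever a correction is needed, or else enlarge $q$ by one free coefficient $\beta$, as the paper hints, to gain enough freedom. I would first try choosing $\alpha$ so that $(2)$ is prime, as in Lemma~\ref{lem:Spin}. Then the correction is forced to occur at the unique dyadic place, where it can be checked directly.
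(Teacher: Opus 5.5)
There is a genuine gap: your similarity correction cannot work when $d\equiv 0\pmod 4$, which is half of the statement. From $m\equiv n\pmod 4$ you get $d/2\equiv n\pmod 2$, so $m,n,m',n'$ are all odd when $d/2$ is odd and all even when $d/2$ is even. In the even case $\delta=(-1)^{d/2}\det q=\alpha^{n}$ is a global square. Hence $(c_v,\delta)_v=1$ for every $c_v$: rescaling $q$ never changes a Hasse invariant, and no choice of $\alpha$ makes $\delta$ a local nonsquare.

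Your existence step then provably fails. Any $q'$ with $q'_v\cong c_vq_v$ at all finite $v$, and definite at the non-distinguished real places, has the same Hasse invariants as $q$ at all those places. But $\epsilon_{w_0}(q')=-\epsilon_{w_0}(q)$, since $(n-n')(n+n'-1)/2$ is odd. So $\prod_v\epsilon_v(q')=-1$, contradicting reciprocity; for $q=\langle 1^{[m]},\alpha^{[n]}\rangle$ the required $q'$ does not exist.

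The missing idea, which you list only as an optional fallback, is to change the discriminant of $q$ without changing its signatures. The paper takes $q=\langle 1^{[m-1]},\beta,\alpha^{[n]}\rangle$ and $q'=\langle 1^{[m'-1]},\beta,\alpha^{[n']}\rangle$, with $\beta$ totally positive and a nonsquare in $k_{v_2}$. Then $\epsilon_{v_2}(\lambda q)=(\lambda,\beta)_{v_2}\epsilon_{v_2}(q)$, and a local $\lambda$ with $(\lambda,\beta)_{v_2}=-1$ gives $\lambda q\cong q'$ at $v_2$. At odd places the Hasse invariants already agree, because $n$ is even and $(\alpha,\alpha)_v=1$. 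In your existence-theorem framework, any totally positive global nonsquare $\beta$ would do, with the correction placed at a finite place where $\beta$ is a local nonsquare. Note also that $m\equiv n$ and $m'\equiv n'\pmod 4$ are not what produces the sign flip at $w_0$. That flip comes from $n-n'\equiv 2\pmod 4$ with $n\equiv n'\pmod 2$. The congruences instead fix the parity of $n$ and hence $\delta$, which is exactly the computation you skipped.

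In the case $d\equiv 2\pmod 4$ your outline is sound, since $\delta\equiv-\alpha$. However, ``arrange by choice of $\alpha$'' hides the one real condition. With $(2)$ prime and $\alpha$ a unit, reciprocity gives $(-\alpha,-\alpha)_{v_2}=(-1)^{[k:\mathbb Q]-1}$. Requiring $[k:\mathbb Q]$ even therefore forces $-\alpha\notin k_{v_2}^{\times 2}$, and this is precisely the extra hypothesis the paper imposes. Alternatively, since you prescribe $q'$ via the existence theorem, you may put the correction at any finite place where $-\alpha$ is a local nonsquare; such places exist because $-\alpha$ is not a global square. The paper avoids the existence theorem altogether. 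It keeps the explicit $q'=\langle 1^{[m']},\alpha^{[n']}\rangle$ and uses only a local scalar $\lambda$ at $v_2$, which suffices because $\mathrm{Spin}(\lambda q)\cong\mathrm{Spin}(q)$ over $k_{v_2}$.
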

\begin{proof}
    Let $k=\mathbb Q(\alpha)$ be a totally real number field generated by an algebraic unit $\alpha$ which is negative at exactly one real place and positive at every other real place. Choose the minimal polynomial of $\alpha$ to be irreducible modulo $2$, so that $k$ has a unique place $v_2$ above $2$. We distinguish the parity of $d/2$.

    \textbf{Case (1): $d/2$ is odd.} Assume in addition that $[k:\mathbb Q]$ is even, and set
    \[
    q=\langle1^{[m]},\alpha^{[n]}\rangle,
    \qquad
    q'=\langle1^{[m']},\alpha^{[n']}\rangle.
    \]
    At every finite place not above $2$, the argument of Lemma~\ref{lem:Spin} gives the required local isomorphism. At $v_2$, Hilbert reciprocity gives $(\alpha,\alpha)_{v_2}=-1$. For $\lambda\in k_{v_2}^\times$, bilinearity of the Hilbert symbol gives
    \[
    \begin{split}
        \epsilon_{v_2}(\lambda q)
        &= (\lambda,\lambda)_{v_2}^{m(m-1)/2}(\lambda,\lambda\alpha)_{v_2}^{mn}
           (\lambda\alpha,\lambda\alpha)_{v_2}^{n(n-1)/2} \\
        &= (\lambda,-\alpha)_{v_2}\,\epsilon_{v_2}(q).
    \end{split}
    \]
    Since $[k:\mathbb Q]$ is even and $\alpha$ is negative at exactly one real place, the number of real places at which $-\alpha$ is negative is odd. Hilbert reciprocity therefore gives
    \[
    (-\alpha,-\alpha)_{v_2}=-1,
    \]
    so $-\alpha$ is not a square in $k_{v_2}$. Hence there exists $\lambda\in k_{v_2}^\times$ with $(\lambda,-\alpha)_{v_2}=-1$. It follows that
    \[
    \epsilon_{v_2}(\lambda q)=-\epsilon_{v_2}(q)=\epsilon_{v_2}(q').
    \]
    Because $d$ is even, scaling by $\lambda$ does not change the determinant square class, and therefore $\lambda q\cong q'$ over $k_{v_2}$. Since scalar multiples define the same special orthogonal and spin groups, the two spin groups are locally isomorphic at $v_2$ as required.

    \textbf{Case (2): $d/2$ is even.} Choose in addition an element $\beta\in k^\times$ which is positive at every real place and is not a square in $k_{v_2}$. Such a choice is possible; for example, take $\alpha$ to be a root of
    \[
    f(x)=x^3-3x^2-2x+1
    \]
    and put $\beta=\alpha+1$. The reduction of $f$ modulo $2$ is irreducible. A direct comparison modulo $4$, using $\alpha^3=3\alpha^2+2\alpha-1$, shows that $\beta$ is not a square in $k_{v_2}$.

    Set
    \[
    q=\langle1^{[m-1]},\beta,\alpha^{[n]}\rangle,
    \qquad
    q'=\langle1^{[m'-1]},\beta,\alpha^{[n']}\rangle.
    \]
    Since $\beta$ is totally positive, these forms have signatures $(m,n)$ and $(m',n')$ at the distinguished real place and are positive definite at every other real place. Their determinant square classes agree at every finite place. For a finite place $v\nmid2$,
    \[
    \epsilon_v(q)=(\alpha,\beta)_v^n(\alpha,\alpha)_v^{n(n-1)/2},
    \qquad
    \epsilon_v(q')=(\alpha,\beta)_v^{n'}(\alpha,\alpha)_v^{n'(n'-1)/2}.
    \]
    Here $(\alpha,\alpha)_v=1$, and $n-n'$ is even, so the Hasse invariants agree. At $v_2$, for $\lambda\in k_{v_2}^\times$ one similarly obtains
    \[
    \epsilon_{v_2}(\lambda q)=(\lambda,\beta)_{v_2}\epsilon_{v_2}(q)
    =-(\lambda,\beta)_{v_2}\epsilon_{v_2}(q').
    \]
    Since $\beta$ is not a square, choose $\lambda$ with $(\lambda,\beta)_{v_2}=-1$. Then $\lambda q\cong q'$ over $k_{v_2}$, so the associated spin groups are locally isomorphic at every finite place.

    In either case, the CSP and projection argument of Proposition~\ref{prop:Spin} produces the required torsion-free cocompact lattices.
\end{proof}

\subsection{Profinite Non-rigidity across Inner Real Forms of Type \texorpdfstring{$A_\ell$}{Al}}
\begin{lemma}\label{lem:slh}
    There exist a real quadratic field $k=\mathbb Q(\alpha)$, a quadratic field extension $E/k$, and central simple $E$-algebras $A$ and $A'$ such that, at the two real places $v_1,v_2$ of $k$,
    \[
    A_{v_1}\cong M_2(\mathbb R)\times M_2(\mathbb R),
    \qquad
    A'_{v_1}\cong\mathbb H\times\mathbb H,
    \qquad
    A_{v_2}\cong A'_{v_2}\cong M_2(\mathbb C),
    \]
    while $A_w\cong A'_w$ for every finite place $w$ of $k$.
\end{lemma}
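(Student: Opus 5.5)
The plan is to build $A$ and $A'$ as quaternion algebras over $E$, prescribing their local invariants through the Albert--Brauer--Hasse--Noether theorem. Fix $k=\mathbb Q(\alpha)$ real quadratic, say $k=\mathbb Q(\sqrt2)$ with $\alpha=\sqrt2$. Choose $a\in k^\times$ with $\sigma_{v_1}(a)>0$ and $\sigma_{v_2}(a)<0$; for example $a=\alpha$ itself. Set $E=k(\sqrt a)$. Then $v_1$ splits in $E$ into two real places $w_1,w_1'$, so that $E_{v_1}\cong\mathbb R\times\mathbb R$, while $v_2$ is ramified to a single complex place, so that $E_{v_2}\cong\mathbb C$. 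In this notation $A_{v}$ means $A\otimes_k k_v$, which is a product over the places of $E$ above $v$. The required conditions at $v_1$ therefore say that $A$ is split at $w_1,w_1'$ and $A'$ is ramified (Hamilton quaternions) at both. At $v_2$ every quaternion algebra over $\mathbb C$ is $M_2(\mathbb C)$, so there is no condition.

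Next we take $A=M_2(E)$ and construct $A'$ as a quaternion algebra over $E$ ramified exactly at $\{w_1,w_1'\}$. By Albert--Brauer--Hasse--Noether, together with the existence part of class field theory (any finite set of places of even cardinality, none complex, is the ramification set of a quaternion algebra), such an $A'$ exists because the set has even size $2$. The sum of its local invariants is $\tfrac12+\tfrac12=0$. Concretely, $A'=(-1,-1)_E$ has $A'\otimes\mathbb R\cong\mathbb H$ at both real places. One would still need to check that it is unramified at the dyadic places of $E$, and this cannot be guaranteed, so we rely on the abstract existence theorem instead. With this choice, at every finite place $w$ of $E$ both algebras are split, hence $A_w\cong A'_w\cong M_2(E_w)$. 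Consequently, for every finite place $u$ of $k$ we get $A\otimes_k k_u\cong\prod_{w\mid u}M_2(E_w)\cong A'\otimes_k k_u$.

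The one step that needs care is the existence of a global central simple algebra with exactly the prescribed local invariants. This is the surjectivity half of the Hasse principle exact sequence
\[
0\to\operatorname{Br}(E)\to\bigoplus_w\operatorname{Br}(E_w)\xrightarrow{\sum\operatorname{inv}}\mathbb Q/\mathbb Z\to0 .
\]
In the paper only the reciprocity half is stated, so we cite the full sequence as a standard result. If later constructions need $A$ and $A'$ to carry involutions of the second kind (so that $\mathrm{SL}_1$ or $\mathrm{SU}$ groups are defined), note that their invariants at places conjugate under $\mathrm{Gal}(E/k)$ are negatives of each other in $\operatorname{Br}$ with corestriction to $k$ trivial. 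Both $A$ and $A'$ qualify: the invariant of $A'$ at $v_1$'s two places is $\tfrac12$ each, and its corestriction to $k_{v_1}$ is $\tfrac12+\tfrac12=0$. By the Albert--Riehm theorem, this corestriction condition is exactly what guarantees such an involution exists.
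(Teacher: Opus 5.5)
Your proof is correct, but you obtain $A'$ by a different route from the paper.

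\textbf{The paper's route.} It chooses $\alpha$ to be a unit of degree $2$ with $\sigma_{v_1}(\alpha)>0>\sigma_{v_2}(\alpha)$ whose minimal polynomial is irreducible modulo $2$. It sets $E=k(\sqrt\alpha)$, $A=M_2(E)$, and takes $A'=(-1,-1)_E$ explicitly. Local splitting is then checked by hand. Above odd $p$ it holds because $(-1,-1)_{\mathbb Q}$ already splits over $\mathbb Q_p$. Above $2$ it holds because $2$ is inert in $k$, so the invariant $\tfrac12$ of $(-1,-1)_{\mathbb Q}$ becomes $[k_w:\mathbb Q_2]\cdot\tfrac12=0$ over $k_w$.

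\textbf{Your route.} You impose no arithmetic condition at $2$. Instead you prescribe the ramification set $\{w_1,w_1'\}$ and invoke the surjectivity half of $0\to\operatorname{Br}(E)\to\bigoplus_w\operatorname{Br}(E_w)\to\mathbb Q/\mathbb Z\to 0$.

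\textbf{Trade-offs.} Your approach is more flexible: any real quadratic $k$ and any $a\in k^\times$ of mixed sign at the two real places will do. The cost is a deeper input from class field theory. Also, because $A'$ is only produced abstractly, you need Albert--Riehm to guarantee an involution of the second kind for the later application. The paper's $A'=(-1,-1)_k\otimes_k E$ comes with the explicit unitary involution $\tau'$ (canonical involution tensored with conjugation on $E$). Its shape $(x,y)\mapsto(y^*,x^*)$ at $v_1$ is exactly what Proposition~\ref{prop:slh} uses.

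\textbf{Your dyadic hedge is unnecessary.} For your own choice $E=\mathbb Q(2^{1/4})$, the polynomial $x^4-2$ is Eisenstein at $2$. So $E$ has a single dyadic place, of local degree $4$, and $(-1,-1)_E$ has invariant $4\cdot\tfrac12=0$ there. Equivalently, by reciprocity: it is ramified at exactly the two real places and split at all odd places, so the lone dyadic place cannot be ramified. By injectivity of $\operatorname{Br}(E)\to\bigoplus_w\operatorname{Br}(E_w)$, the algebra you construct abstractly is therefore just $(-1,-1)_E$. You could have written it down directly, as the paper does.
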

\begin{proof}
    Choose an algebraic unit $\alpha$ of degree $2$ whose minimal polynomial is irreducible modulo $2$, with
    \[
    \sigma_{v_1}(\alpha)>0,
    \qquad
    \sigma_{v_2}(\alpha)<0,
    \]
    and put $E=k(\sqrt\alpha)$. Then
    \[
    E_{v_1}\cong\mathbb R\times\mathbb R,
    \qquad
    E_{v_2}\cong\mathbb C.
    \]
    Define
    \[
    A=M_2(E),
    \qquad
    A'=(-1,-1)_E.
    \]
    Hence
    \[
    A_{v_1}\cong M_2(\mathbb R)\times M_2(\mathbb R),
    \qquad
    A'_{v_1}\cong\mathbb H\times\mathbb H,
    \]
    and
    \[
    A_{v_2}\cong A'_{v_2}\cong M_2(\mathbb C).
    \]

    Let $w$ be a finite place of $k$. If $w$ lies above an odd prime $p$, then the Hamilton quaternion algebra $(-1,-1)_\mathbb Q$ splits over $\mathbb Q_p$, so
    \[
    A'_w\cong M_2(E_w)\cong A_w.
    \]
    Above $2$, irreducibility of the minimal polynomial modulo $2$ gives a unique place $w$ with $[k_w:\mathbb Q_2]=2$. The local invariant of $(-1,-1)_\mathbb Q$ at $2$ is $1/2$, hence after restriction to $k_w$ it becomes
    \[
    [k_w:\mathbb Q_2]\cdot\frac12=0\in\mathbb Q/\mathbb Z.
    \]
    Thus $(-1,-1)_{k_w}$ is split, and again $A'_w\cong M_2(E_w)\cong A_w$.
\end{proof}

\begin{proposition}\label{prop:slh}
    For every $n\geq2$, there exist torsion-free cocompact arithmetic lattices
    \[
    \Gamma<\mathrm{SL}(4n,\mathbb R),
    \qquad
    \Gamma'<\mathrm{SL}(2n,\mathbb H),
    \]
    with $\widehat\Gamma\cong\widehat{\Gamma'}$.
\end{proposition}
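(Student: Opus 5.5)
The idea is to realize $\mathrm{SL}(4n,\mathbb R)$ and $\mathrm{SL}(2n,\mathbb H)$ as the unique noncompact archimedean factor of special unitary groups attached to involutions of the second kind. These groups will have outer type ${}^2A_{4n-1}$ over the real quadratic field $k$ of Lemma~\ref{lem:slh}, and they will be compact at the second real place. The rest follows the template of Propositions~\ref{prop:SU} and~\ref{prop:Sp}.

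\textbf{Construction.} Take $k$, $E=k(\sqrt\alpha)$, $A=M_2(E)$ and $A'=(-1,-1)_E$ from Lemma~\ref{lem:slh}. Put
\[
B=M_{2n}(A)\cong M_{4n}(E),\qquad B'=M_{2n}(A')\cong M_n((-1,-1)_k)\otimes_k E,
\]
both central simple of degree $4n$ over $E$. Since both come from $k$-algebras, their corestrictions to $k$ are trivial, so each carries an involution of the second kind $\tau,\tau'$ relative to $E/k$. I would write these as adjoint involutions of Hermitian forms: $h$ a $4n$-dimensional $E/k$-Hermitian form, and $h'$ an $n$-dimensional Hermitian form over $(-1,-1)_E$ for the involution given by the canonical involution tensored with Galois conjugation. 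Set
\[
\mathbf G=\mathrm{SU}(B,\tau),\qquad \mathbf G'=\mathrm{SU}(B',\tau').
\]
The archimedean factors are then as follows.
\begin{itemize}
\item At $v_1$, $E$ splits, so the factors are $\mathrm{SL}_1(M_{2n}(A_{v_1}))=\mathrm{SL}(4n,\mathbb R)$ and $\mathrm{SL}_1(M_{2n}(\mathbb H))=\mathrm{SL}(2n,\mathbb H)$, whatever $\tau,\tau'$ are.
\item At $v_2$, $E_{v_2}=\mathbb C$, and we get unitary groups $\mathrm{SU}(p,q)$. We choose $h,h'$ definite at $v_2$, which makes this factor the compact group $\mathrm{SU}(4n)$.
\end{itemize}

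\textbf{Local comparison at finite places.} Let $w$ be a finite place of $k$.
\begin{itemize}
\item If $w$ splits in $E$, both groups are $\mathrm{SL}_1$ of $B_w\cong B'_w$, which holds by Lemma~\ref{lem:slh}.
\item If $w$ is nonsplit, then $B_w\cong B'_w\cong M_{4n}(E_w)$, again by Lemma~\ref{lem:slh}. The special unitary groups are then determined by the $4n$-dimensional Hermitian forms over $E_w/k_w$, which are classified by discriminant in $k_w^\times/N(E_w^\times)$. After Morita transport of $h'$ to an $E$-Hermitian form, we need the two discriminants to agree at every nonsplit finite $w$.
\end{itemize}
Rescaling by $k^\times$ does not change the discriminant, since the dimension $4n$ is even. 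So we arrange the matching by choosing $h'$ globally, keeping $h$ (say the unit form) fixed. By Landherr's local–global theorem for Hermitian forms, a global form exists with prescribed local discriminants at all finite places and prescribed signatures at real places. The only constraint is the product-formula parity for the norm-residue symbols of the discriminant. That constraint is satisfied because the prescribed finite data are those of the global form $h$, and the archimedean data are also those of $h$: both are definite at $v_2$, and $v_1$ is split so imposes nothing.

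\textbf{Conclusion.} Both groups are simply connected, absolutely almost simple of type ${}^2A_{4n-1}$. The field $k$ is real, and the archimedean rank is $4n-1\ge 2$ or $2n-1\ge 3$ since $n\ge2$. So the congruence subgroup property holds as invoked in Proposition~\ref{prop:SU}. Lemma~\ref{lem:CSP} then gives torsion-free arithmetic subgroups with isomorphic profinite completions. Corollary~\ref{cor:CSP}, applied after projecting away the compact factor $\mathrm{SU}(4n)$ at $v_2$, makes them cocompact lattices in $\mathrm{SL}(4n,\mathbb R)$ and $\mathrm{SL}(2n,\mathbb H)$. The main obstacle is the finite nonsplit places: producing $h'$ on the quaternionic side whose transported discriminant matches that of $h$ everywhere while staying definite at $v_2$. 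I would settle this first via Landherr's theorem and the Hasse-principle bookkeeping above.
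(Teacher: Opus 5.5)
Your construction and local analysis match the paper's. You use the same $k$, $E$, $A=M_2(E)$ and $A'=(-1,-1)_E$, and the involution on $A'$ is the canonical involution tensored with Galois conjugation. The archimedean factors are the same: $\mathrm{SL}(4n,\mathbb R)$ and $\mathrm{SL}(2n,\mathbb H)$ at $v_1$, and compact $\mathrm{SU}(4n)$ at $v_2$. The finite-place comparison also runs the same way, via Morita equivalence and determinants in $k_w^\times/N_{E_w/k_w}(E_w^\times)$.

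You diverge only in the choice of $h'$, and there the detour through Landherr's theorem is unnecessary. The paper takes $h'=\langle 1^{[2n]}\rangle$ over $(A',\tau')$. At a nonsplit finite $w$, the Morita transport of $h'_w$ is $h_0^{\perp 2n}$ for a single $2$-dimensional $E_w/k_w$-Hermitian form $h_0$. Its determinant $\det(h_0)^{2n}$ is a square, hence a norm, exactly as for $\langle 1^{[4n]}\rangle$. Since $\tau'$ becomes conjugate transposition on $M_2(\mathbb C)$ at $v_2$, the unit form is also automatically definite there. Your route would still work, but $h'$ lives over the global division algebra $A'$. So you would need the skew-field version of Landherr's existence theorem, not the $E/k$ version, only to recover what the unit form gives for free. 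Also fix two slips: $B'=M_{2n}((-1,-1)_k)\otimes_k E$, and $h'$ has rank $2n$ over $A'$, not $n$.

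The genuine gap is the congruence subgroup property for $\mathbf G'$. You assert it holds ``as invoked in Proposition~\ref{prop:SU}'', because $\mathbf G'$ has type ${}^2A_{4n-1}$, $k$ is real, and the archimedean rank is at least $2$. But the input in Proposition~\ref{prop:SU} concerns special unitary groups of Hermitian forms over the quadratic extension $E/k$. Your $\mathbf G'$ is instead the special unitary group of a Hermitian form over the quaternion division algebra $A'$. It is also $k$-anisotropic, since its $v_2$-factor is compact, so Raghunathan's theorem for isotropic groups does not apply either. For anisotropic outer forms of type $A$, CSP does not follow from type and rank alone; it is known only for particular families, and the relevant one must be cited. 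The paper uses Tomanov's congruence subgroup theorem for unitary groups of Hermitian forms over quaternion division algebras with an involution of the second kind (Hermitian dimension $2n\ge 4$), taken from Rapinchuk's survey. Without such an input, Lemma~\ref{lem:CSP} cannot be applied to $(\mathbf G,\mathbf G')$, so the argument does not close as written.
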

\begin{proof}
    Let $k,E,A,A'$ be as in Lemma~\ref{lem:slh}. Let $\overline{\phantom{x}}$ denote the nontrivial $k$-automorphism of $E$. Define unitary involutions
    \[
    \tau:A\longrightarrow A,
    \qquad
    \tau(X)=\overline X^{\mathsf T},
    \]
    and
    \[
    \tau':A'\longrightarrow A',
    \qquad
    \tau'(x_0+x_1\mathrm i+x_2\mathrm j+x_3\mathrm{ij})
    =\overline{x_0}-\overline{x_1}\mathrm i-\overline{x_2}\mathrm j-\overline{x_3}\mathrm{ij}.
    \]
    Using the standard Hermitian form $h=\langle1^{[2n]}\rangle$, put
    \[
    \mathbf G=\mathrm{SU}(2n,A,\tau),
    \qquad
    \mathbf G'=\mathrm{SU}(2n,A',\tau').
    \]

    At $v_1$, the involutions identify with
    \[
    \tau:(X,Y)\longmapsto(Y^{\mathsf T},X^{\mathsf T}),
    \qquad
    \tau':(x,y)\longmapsto(y^*, x^*),
    \]
    and therefore
    \[
    \mathbf G(k_{v_1})\cong\mathrm{SL}(4n,\mathbb R),
    \qquad
    \mathbf G'(k_{v_1})\cong\mathrm{SL}(2n,\mathbb H).
    \]
    At $v_2$, both involutions become conjugate transpose on $M_2(\mathbb C)$, so
    \[
    \mathbf G(k_{v_2})\cong\mathbf G'(k_{v_2})\cong\mathrm{SU}(4n),
    \]
    which is compact.

    Let $w$ be a finite place. If $E/k$ splits at $w$, then
    \[
    A_w\cong A'_w\cong M_2(k_w)\times M_2(k_w),
    \]
    and both local groups are isomorphic to $\mathrm{SL}(4n,k_w)$. If $E_w/k_w$ is a field extension, Morita equivalence identifies
    \[
    \mathrm{SU}(2n,A_w,\tau_w)\cong\mathrm{SU}(4n,E_w/k_w,h_w),
    \]
    and
    \[
    \mathrm{SU}(2n,A'_w,\tau'_w)\cong\mathrm{SU}(4n,E_w/k_w,h'_w),
    \]
    where $h_w\cong\langle1^{[4n]}\rangle$ and $h'_w\cong h_0^{\perp 2n}$ for a two-dimensional Hermitian form $h_0$. Hence
    \[
    [\det(h'_w)]=[\det(h_0)^{2n}]=[1]=[\det(h_w)]
    \quad\text{in}\quad
    k_w^\times/N_{E_w/k_w}(E_w^\times).
    \]
    The local Hermitian forms therefore have the same determinant invariant, and the corresponding special unitary groups are $k_w$-isomorphic. Thus $\mathbf G$ and $\mathbf G'$ are isomorphic at every finite place.

    Via Morita equivalence, $\mathbf G$ is a special unitary group attached to a $4n$-dimensional Hermitian form over $E/k$, the congruence subgroup property follows from \cite{kammeyer2023profinite}. For $\mathbf G'$, the group is a special unitary group of Hermitian dimension $2n$ over the quaternion division algebra $A'=(-1,-1)_E$, with respect to the involution $\tau'$ of the second kind. Since it has Hermitian dimension $2n\geq 4$ and real rank $2n-1> 2$, Tomanov's congruence subgroup theorem for such quaternionic unitary groups applies; see \cite[Theorem 3 and the discussion in page 178]{Rap99}. Lemma~\ref{lem:CSP} and Corollary~\ref{cor:CSP}, together with the compact factor at $v_2$, now give torsion-free cocompact lattices
    \[
    \Gamma<\mathrm{SL}(4n,\mathbb R),
    \qquad
    \Gamma'<\mathrm{SL}(2n,\mathbb H),
    \]
    with isomorphic profinite completions.
\end{proof}
\subsection{Profinite Non-rigidity across Exceptional Real Forms}
We first consider profinitely isomorphic lattices in $E_{6(-14)}$ and $E_{6(2)}$ using Tits' construction, which associates exceptional Lie algebras to composition algebras and Albert algebras \cite{tits1965algebres}:
\begin{lemma}\label{lem:albert}
    There exists a real quadratic field $k=\mathbb Q(\alpha)$ with real places $v_1$, $v_2$, and two Albert algebras $A$, $A'$ over $k$, such that $A_{v_1}$ is isomorphic to the split real Albert algebra, $A'_{v_1}$ is isomorphic to the intermediate one, while both $A_{v_2}$ and $A'_{v_2}$ are isomorphic to the compact one.
\end{lemma}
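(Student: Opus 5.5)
We will use the first Tits construction $J(D,\mu)$, or more simply Albert algebras of the form $H_3(C,\gamma)$ built from an octonion algebra $C$ over $k$ and a diagonal twist $\gamma=\mathrm{diag}(\gamma_1,\gamma_2,\gamma_3)$. Over $\mathbb R$ there are exactly three Albert algebras: the split one $H_3(\mathbb O_s)$, the compact one $H_3(\mathbb O)$ with $\mathbb O$ the Cayley division algebra and $\gamma$ positive definite, and the intermediate one $H_3(\mathbb O,\langle1,1,-1\rangle)$. These are distinguished by the octonion algebra $C$ together with the signature of $\gamma$. The plan is to choose $k$, $C$ and the $\gamma$'s so that the real completions are the prescribed ones. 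We do not need to control the finite places at this stage; that is presumably handled later, for instance via the triviality of Albert algebras and $E_6$-type invariants at nonarchimedean places, or via the $f_3,f_5,g_3$ invariants.

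First, choose $k=\mathbb Q(\alpha)$ real quadratic with $\alpha$ a unit, $\sigma_{v_1}(\alpha)<0<\sigma_{v_2}(\alpha)$ (e.g.\ $\alpha=1-\sqrt2$ up to sign choice). Take the Cayley--Dickson octonion algebra $C=(a,b,c)_k$ with norm form $\langle\langle a,b,c\rangle\rangle$. For $A'$ take $C=(-1,-1,-1)_k$, which is a division algebra at both real places, and $\gamma'=\langle1,1,\alpha\rangle$. Then $A'_{v_1}=H_3(\mathbb O,\langle1,1,-1\rangle)$ is intermediate and $A'_{v_2}=H_3(\mathbb O,\langle1,1,1\rangle)$ is compact. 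For $A$ take $C=(-1,-1,\alpha')_k$ with $\alpha'$ negative at $v_2$ and positive at $v_1$ (for instance $\alpha'=-\alpha$). The 3-Pfister form $\langle\langle-1,-1,\alpha'\rangle\rangle$ is then isotropic at $v_1$, giving split octonions, and anisotropic at $v_2$, giving $\mathbb O$. Take $\gamma=\langle1,1,1\rangle$. At $v_1$, $H_3(\mathbb O_s,\gamma)$ is the split Albert algebra regardless of $\gamma$, since split octonions have hyperbolic norm. At $v_2$ we get $H_3(\mathbb O,\langle1,1,1\rangle)$, which is compact.

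The main obstacle is justifying the real classification. Over $\mathbb R$, $H_3(C,\gamma)$ is determined by the Rost invariants $f_3=$ norm of $C$ and $f_5=\langle\langle\text{norm of }C\rangle\rangle\otimes\langle\langle\gamma\text{-data}\rangle\rangle$. The compact algebra has $f_5$ anisotropic, the intermediate has $f_3$ anisotropic but $f_5$ isotropic, and the split one has $f_3$ split. We will cite this from Tits or Jacobson and only verify these invariants in our examples. This amounts to sign computations at $v_1,v_2$: $f_5=\langle\langle-1,-1,-1,-\gamma_1\gamma_2^{-1},-\gamma_1\gamma_3^{-1}\rangle\rangle$, which is anisotropic iff all entries are negative.

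If the paper's later argument also requires $A_w\cong A'_w$ at finite places, we would add a check that the pairs $(f_3,f_5)$ agree locally. Both $f_3$ and $f_5$ are trivial at every finite place, since over a $p$-adic field any form of dimension $>4$ is isotropic, and reduced Albert algebras are classified by $(f_3,f_5)$. So all finite completions are split and agree automatically, and only the real places differ, as desired.
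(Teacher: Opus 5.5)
Your construction is correct and is essentially the paper's. The paper also takes reduced algebras $H_3(C,h)$ and $H_3(C',h)$ with $C'=(-1,-1,-1)_k$ and $C=(-1,-1,\alpha)_k$, where $\alpha$ is positive at $v_1$ and negative at $v_2$. The only differences are these. The paper uses the single twist $h=\langle 1,1,-\alpha\rangle$ for both algebras, whereas your choice of $\langle 1,1,1\rangle$ for $A$ is harmless because $C_{v_1}$ is split. The paper also simply asserts the real identifications, whereas you verify them via the mod-$2$ invariants $f_3,f_5$; these are Serre's invariants rather than Rost invariants, but that is only a naming slip.
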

\begin{proof}
    Choose an algebraic unit $\alpha$ of degree two such that $\sigma_{v_1}(\alpha)>0$ and $\sigma_{v_2}(\alpha)<0$. Define the octonion algebras $C= (-1,-1,\alpha)_k$ and $C'= (-1,-1,-1)_k$, octonionic Hermitian form $h = \langle 1^{[2]},-\alpha^{[1]}\rangle$, and Albert algebras
    \[
    A = H_3(C,h),\quad A' = H_3(C',h).
    \]
    The construction of $C$, $C'$, and $h$ satisfies that
    \[
    C_{v_1}\cong \mathbb{O}_s,\quad C'_{v_1}\cong C_{v_2}\cong C'_{v_2}\cong \mathbb{O},
    \]
    and
    \[
    h_{v_1} \cong \langle 1^{[2]},-1^{[1]}\rangle,\quad h_{v_2} \cong \langle 1^{[3]}\rangle.
    \]
    Therefore,
    \[
    A_{v_1}\cong H_3(\mathbb{O}_s),\quad A'_{v_1}\cong H_3(\mathbb{O},(1,1,-1)),\quad A_{v_2}\cong A'_{v_2}\cong H_3(\mathbb{O}),
    \]
    which are, respectively, the split, intermediate, and compact real Albert algebras, as desired.
\end{proof}
\begin{remark}\label{rem:albert}
    By letting $k = \mathbb{Q}$, $C/\mathbb{Q} = (-1,-1,1)_{\mathbb{Q}}$, $C'/\mathbb{Q} = (-1,-1,-1)_{\mathbb{Q}}$, and $h = \langle 1^{[2]},-1^{[1]}\rangle$, we similarly have Albert algebras $A/\mathbb{Q}$ and $A'/\mathbb{Q}$ such that $A_\infty$ is split while $A'_\infty$ is intermediate.
\end{remark}
\begin{proposition}\label{Pro:cocompt}
    There exist torsion-free arithmetic lattices
    \[
    \Gamma< E_{6(2)},\quad \Gamma'< E_{6(-14)},
    \]
    with profinite completions $\widehat{\Gamma}\cong \widehat{\Gamma}'$. In addition, assuming the congruence subgroup property for anisotropic groups of type $E_6$, which remains open in general, the lattices $\Gamma$ and $\Gamma'$ may be chosen cocompact.
\end{proposition}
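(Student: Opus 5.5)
Our plan is to use the Albert algebras of Remark~\ref{rem:albert} (non-cocompact case) and of Lemma~\ref{lem:albert} (cocompact case), and to pass from them to $E_6$ groups via Tits' construction. For an Albert algebra $J$ over a field $F$, the structure group of $J$ has derived subgroup $\mathbf{H}_J$, which is the simply connected group of type ${}^1E_6$ preserving the cubic norm of $J$ (the isometry group of the norm). By Tits' classification, the real points of $\mathbf{H}_J$ for the split, intermediate and compact real Albert algebras are the simply connected forms of $E_{6(6)}$, $E_{6(-26)}$ and $E_{6(-26)}$ respectively. These are the norm groups of the first Tits construction, whereas $E_{6(2)}$ and $E_{6(-14)}$ are outer (type ${}^2E_6$) forms. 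So we must instead use the quasi-split-type construction: the special unitary group of the Hermitian-octonionic setup over a quadratic extension. Concretely, we fix an imaginary quadratic twist $K=k(\sqrt{-1})$ at the relevant real place. We then take $\mathbf{G}=\mathbf{SU}(A,K)$, the group of $K$-linear norm isometries of $A\otimes_k K$ that commute with the Hermitian structure built from the conjugation on $K$ (the ${}^2E_6$ groups of Tits' $E_6$ table). In Tits' table, split octonions with an indefinite form give $E_{6(2)}$, division octonions with signature $(2,1)$ give $E_{6(-14)}$, and the compact data give $E_{6(-78)}$.

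\textbf{Local isomorphism at finite places.} First, $C=(-1,-1,\alpha)_k$ and $C'=(-1,-1,-1)_k$ are octonion algebras, classified by their $3$-fold Pfister forms. Over a non-archimedean local field every octonion algebra is split, since $3$-fold Pfister forms are hyperbolic there because the $u$-invariant is $4$. Hence $C_v\cong C'_v$ for every finite $v$, and so $A_v\cong A'_v$. Since the finite-place groups are built functorially from $(A,K)$, we get $\mathbf{G}_v\cong\mathbf{G}'_v$ for all $v\in V_f(k)$. At real places we read off the forms from Tits' table as in Lemma~\ref{lem:albert}: at $v_1$ we get $E_{6(2)}$ versus $E_{6(-14)}$, and at $v_2$ both are compact. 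In the case $k=\mathbb{Q}$ of Remark~\ref{rem:albert} there is no compact factor.

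\textbf{Congruence subgroup property and conclusion.} The main obstacle is that $E_6$ is not covered by Theorem~\ref{congruencePro}. In the $k=\mathbb{Q}$ case, both $\mathbf{G}$ and $\mathbf{G}'$ are $\mathbb{Q}$-isotropic: $E_{6(2)}$ and $E_{6(-14)}$ have real ranks $4$ and $2$, and we choose the rational data with an isotropic Hermitian form so that the groups are $\mathbb{Q}$-isotropic. For isotropic groups of $k$-rank at least $1$ and total archimedean rank at least $2$, CSP is known by Raghunathan's theorem. So Lemma~\ref{lem:CSP} applies, with $G_{\infty}$ noncompact, and yields torsion-free arithmetic lattices $\Gamma<E_{6(2)}$ and $\Gamma'<E_{6(-14)}$ with $\widehat\Gamma\cong\widehat{\Gamma'}$; passing to simply connected covers is harmless. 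In the quadratic-field case the groups are anisotropic over $k$ because of the compact factor at $v_2$. There CSP for anisotropic ${}^2E_6$ is open, so we assume it, and then Lemma~\ref{lem:CSP} and Corollary~\ref{cor:CSP} give cocompact lattices after projecting away the compact factor. The delicate check is ensuring that the rational data in the noncocompact case genuinely give the outer forms $E_{6(2)}$ and $E_{6(-14)}$ with $\mathbb{Q}$-rank at least $1$.
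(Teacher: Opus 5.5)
Your construction is the paper's: the outer groups $G(A,E)$ with $E=k(\sqrt{-1})$, built from the Albert algebras of Lemma~\ref{lem:albert} and Remark~\ref{rem:albert}, finished with Lemma~\ref{lem:CSP} and Corollary~\ref{cor:CSP}. Your finite-place argument is a fine substitute for the uniqueness of $p$-adic Albert algebras: octonion algebras split over $p$-adic fields, so $H_3(C,h)\otimes k_v\cong H_3(C',h)\otimes k_v$. The conditional cocompact half is handled exactly as in the paper.

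The gap is in the unconditional half, at the step you yourself call delicate. For these $E_6$ groups the route to CSP is Raghunathan's theorem for isotropic groups. So everything rests on $G(A,E)$ and $G(A',E)$ having $\mathbb{Q}$-rank at least $1$, and you only assert this.

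\begin{itemize}
\item The real ranks $4$ and $2$ do not bear on it. The groups of Lemma~\ref{lem:albert} have the same real forms $E_{6(2)}$ and $E_{6(-14)}$ at $v_1$, yet they are $k$-anisotropic.
\item Saying that ``the Hermitian form is isotropic'' proves nothing until you connect isotropy of $h$ to isotropy of a group of type ${}^2E_6$.
\end{itemize}

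The paper supplies this link via Garibaldi--Petersson. Their isotropy criterion (Theorem~0.2) says $G(A,E)$ is isotropic once $\mathbb{Q}\times E$ is a subalgebra of $A$. Their Proposition~6.1 gives this embedding for $h=\langle1^{[2]},-1^{[1]}\rangle$ in both $A=H_3(C,h)$ and $A'=H_3(C',h)$.

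You can also see the embedding by hand. In the standard coordinatization $c[jk]=\gamma_k c\,e_{jk}+\gamma_j\bar c\,e_{kj}$, the Peirce $0$-space of $e_{11}$ is spanned by $e_{22}$, $e_{33}$ and the $c[23]$. Moreover $(1[23])^2=\gamma_2\gamma_3(e_{22}+e_{33})=-(e_{22}+e_{33})$. Since $e_{11}$ annihilates its Peirce $0$-space, this gives
\[
\mathbb{Q}e_{11}\oplus\mathbb{Q}(e_{22}+e_{33})\oplus\mathbb{Q}\,1[23]\cong\mathbb{Q}\times\mathbb{Q}(\sqrt{-1}),
\]
independently of $C$. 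With isotropy established, Raghunathan's theorem (as packaged in Kammeyer--Kionke) gives CSP for both groups, and the rest of your argument goes through.
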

\begin{proof}
    Let $k = \mathbb{Q}(\alpha)$ be a quadratic extension of $\mathbb{Q}$, $A$ and $A'$ be Albert algebras as in Lemma~\ref{lem:albert}, and $E = k(\sqrt{-1})$ be a quadratic extension of $k$. Tits' construction gives simply connected groups, \cite{garibaldi2007groups}
    \[
    \mathbf{G} = G(A,E),\quad \mathbf{G}' = G(A',E).
    \]
    At each finite place $w$ of $k$, there is a single isomorphism type of Albert algebra over $k_w$ (see, e.g., \cite{schwermer2011geometric}), thus $\mathbf{G}_w\cong \mathbf{G}'_w$. For the two real places $v_1$ and $v_2$, $E_{v_1} \cong E_{v_2}\cong \mathbb{C}$ are fields, thus \cite[4.4]{garibaldi2007groups} implies that $\mathbf{G}_{v_i}$ and $\mathbf{G}'_{v_i}$ are of type $^2E_6$. In particular, together with the real forms of $A$ and $A'$ as described in Lemma~\ref{lem:albert}, the corresponding real forms are identified in \cite{jacobson2017exceptional}:
    \[
    \mathbf{G}_{v_1}\cong E_{6(2)},\quad \mathbf{G}'_{v_1}\cong E_{6(-14)},\quad \mathbf{G}_{v_2}\cong\mathbf{G}'_{v_2}\cong E_{6(-78)}.
    \]
    That is to say,
    \[
    \mathbf{G}(k\otimes_\mathbb{Q}\mathbb{R})\cong E_{6(2)}\times E_{6(-78)},\quad \mathbf{G}'(k\otimes_\mathbb{Q}\mathbb{R})\cong E_{6(-14)}\times E_{6(-78)}.
    \]
    The compact factor $E_{6(-78)}$ implies that $\mathbf{G}$ and $\mathbf{G}'$ are anisotropic over $k$. Assuming the congruence subgroup property for these anisotropic $k$-groups of type ${}^2E_6$, Lemma~\ref{lem:CSP} together with Corollary~\ref{cor:CSP} thus yields cocompact torsion-free lattices $\Gamma<E_{6(2)}$ and $\Gamma'<E_{6(-14)}$ with isomorphic profinite completions.

    Alternatively, consider $E=\mathbb{Q}(\sqrt{-1})$ and the Albert algebras $A/\mathbb{Q}$ and $A'/\mathbb{Q}$ resulting from Remark~\ref{rem:albert}. Since the Hermitian form $h=\langle1^{[2]},-1^{[1]}\rangle$ satisfies the condition in \cite[Proposition~6.1]{garibaldi2007groups}, the proposition implies that $\mathbb{Q}\times E$ is a subalgebra of both $A$ and $A'$. By \cite[Theorem~0.2]{garibaldi2007groups}, $\mathbf{G}=G(A,E)$ and $\mathbf{G}'=G(A',E)$ are isotropic. Therefore, by \cite[Theorem~2.4]{kammeyer2023profinite}, both satisfy the congruence subgroup property. Lemma~\ref{lem:CSP} and Corollary~\ref{cor:CSP} then yield profinitely isomorphic torsion-free non-cocompact lattices $\Gamma<E_{6(2)}$ and $\Gamma'<E_{6(-14)}$.
\end{proof}
The remaining pairs of exceptional real forms arise from analogous Tits constructions using quadratic or quaternion algebras with prescribed behavior at the real places:
\begin{proposition}
    There exist torsion-free cocompact arithmetic lattices $\Gamma<G$, $\Gamma'<G'$ with isomorphic profinite completions satisfying any of the following:
    \begin{itemize}
        \item $G = E_{7(7)}$ and $G' = E_{7(-25)}$.
        \item $G = E_{8(8)}$ and $G' = E_{8(-24)}$.
    \end{itemize}
    Moreover, assuming the congruence subgroup property for anisotropic groups of type $E_6$, which remains open in general, we can also let $G = E_{6(6)}$ and $G' = E_{6(-26)}$.
\end{proposition}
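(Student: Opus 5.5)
The plan follows the template of Proposition~\ref{Pro:cocompt}. We build two simply connected $k$-groups of the given exceptional type over a real quadratic field $k=\mathbb Q(\alpha)$, with $\alpha$ a unit satisfying $\sigma_{v_1}(\alpha)>0$ and $\sigma_{v_2}(\alpha)<0$. The groups will agree at every finite place, differ at $v_1$, and both be compact at $v_2$. We then apply Lemma~\ref{lem:CSP} and Corollary~\ref{cor:CSP}. The compact factor at $v_2$ gives anisotropy over $k$, which yields cocompactness.

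For $E_7$ and $E_8$ we use the Tits construction $\mathfrak g=\mathrm{Der}(Q)\oplus(Q_0\otimes J_0)\oplus\mathrm{Der}(J)$, with $J$ an Albert algebra and $Q$ a quaternion algebra (for $E_7$) or an octonion algebra (for $E_8$).
\begin{itemize}
\item Take $J$ to be an Albert algebra which is compact at both real places, for instance $H_3(C_0,\langle1,1,1\rangle)$ with $C_0=(-1,-1,-1)_k$.
\item For $E_7$, let $Q=M_2(k)$ and $Q'=(-1,-1)_k$. At $v_1$, Tits' table gives $\mathbb H_s$ with $H_3(\mathbb O)$, which is $E_{7(-25)}$; that is the wrong split form. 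So we instead vary $J$: pair $Q=M_2(k)$ with $J=H_3(C,h)$ and $J'=H_3(C',h)$ from Lemma~\ref{lem:albert}, giving split and intermediate Albert algebras at $v_1$. Tits' table then gives $E_{7(7)}$ and $E_{7(-5)}$. To obtain $E_{7(-25)}$ we use $J'$ compact at $v_1$ together with $Q$ split.
\item We must also arrange $v_2$ to be compact, which requires both $Q$ and $J$ to be division/compact at $v_2$. So the quaternion algebra should be $(-1,\alpha)_k$, which is split at $v_1$ and is $\mathbb H$ at $v_2$.
\item For $E_8$ the same scheme applies with an octonion algebra $(-1,-1,\alpha)_k$, split at $v_1$ and compact at $v_2$. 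Pairing with $J$ split at $v_1$ gives $E_{8(8)}$, and with $J'$ compact at $v_1$ gives $E_{8(-24)}$. Both are compact at $v_2$.
\end{itemize}
The main bookkeeping is to read off these real forms from Tits' magic square and to choose $J$ and $J'$ with the prescribed real behaviour, much as in Lemma~\ref{lem:albert}: for example $H_3(C_0,\langle1,1,1\rangle)$ versus $H_3((-1,-1,\alpha)_k,\langle1,1,1\rangle)$, and similarly for the quaternion component.

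Next we check the finite places. Over a $p$-adic field there is a unique Albert algebra and a unique octonion algebra, both split. A quaternion algebra, however, may be ramified. So for $E_8$ the two groups agree at every finite $w$ automatically. For $E_7$, where the quaternion component is common to both constructions, only the Albert algebra differs, and it is unique locally. Hence $\mathbf G_w\cong\mathbf G'_w$ for all $w\in V_f(k)$.

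The congruence subgroup property for types $E_7$ and $E_8$ follows from Theorem~\ref{congruencePro}, since $k$ is real and the archimedean rank is at least $2$ at $v_1$. Then Lemma~\ref{lem:CSP} gives torsion-free arithmetic lattices with isomorphic completions, and Corollary~\ref{cor:CSP} projects away the compact $v_2$ factor to obtain cocompact lattices in $G$ and $G'$.

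For $E_6$, we proceed as in Proposition~\ref{Pro:cocompt}, but with the first Tits construction (the inner type, taking $E$ split, i.e.\ $E=k\times k$), using the split and compact Albert algebras at $v_1$. This gives $E_{6(6)}$ and $E_{6(-26)}$ at $v_1$, and the compact $E_{6(-78)}$ at $v_2$. Cocompactness requires anisotropy and hence the conditional CSP. The main obstacle I anticipate is the accurate identification of the real forms from Tits' table at $v_1$ and $v_2$, and ensuring that the $v_2$ factor is genuinely compact.
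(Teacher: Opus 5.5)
Your $E_7$ and $E_8$ constructions are the paper's, once the exploratory detours are removed. You use a common quaternion algebra $(-1,\alpha)_k$ (resp.\ octonion algebra $(-1,-1,\alpha)_k$), paired with $H_3((-1,-1,\alpha)_k)$ versus $H_3((-1,-1,-1)_k)$. The rest also matches: local uniqueness of Albert algebras at finite places, the compact factor at $v_2$, CSP, and Corollary~\ref{cor:CSP}. There is one slip in a discarded detour: split quaternions with the intermediate Albert algebra $H_3(\mathbb{O},(1,1,-1))$ give $E_{7(-25)}$, not $E_{7(-5)}$. Obtaining $E_{7(-5)}$ requires $Q_{v_1}\cong\mathbb{H}$. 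This does not affect what you finally use.

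The $E_6$ part has a genuine error. With $E=k\times k$ you also have $E_{v_2}\cong\mathbb{R}\times\mathbb{R}$. So at $v_2$ the construction pairs a split quadratic algebra with the compact Albert algebra, which gives $E_{6(-26)}$, not $E_{6(-78)}$. The underlying reason is that with $E$ split the $k$-group is of inner type ${}^1E_6$, hence inner at every place. The compact real form of $E_6$, however, is an outer form: it is inner to $E_{6(2)}$, not to $E_{6(6)}$. So no choice of Albert algebras can make the $v_2$ factor compact. Your groups would satisfy $\mathbf{G}(k\otimes_{\mathbb{Q}}\mathbb{R})\cong E_{6(6)}\times E_{6(-26)}$ and $\mathbf{G}'(k\otimes_{\mathbb{Q}}\mathbb{R})\cong E_{6(-26)}\times E_{6(-26)}$. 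Corollary~\ref{cor:CSP} would then produce irreducible lattices in these products, not in $E_{6(6)}$ and $E_{6(-26)}$. Moreover, with reduced Albert algebras and $E$ split the groups are $k$-isotropic, so the lattices are not cocompact either. Your appeal to anisotropy and the conditional CSP is therefore inconsistent with the construction.

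The fix, which is what the paper does, is to take $E$ to be a quadratic \emph{field} extension of $k$ that splits at $v_1$ but not at $v_2$, e.g.\ $E=k(\sqrt{\alpha})$. The $k$-group $G(H_3(C),E)$ is then of outer type ${}^2E_6$. It becomes inner at $v_1$, giving $E_{6(6)}$ (resp.\ $E_{6(-26)}$ for $H_3(C')$), and is the compact outer form $E_{6(-78)}$ at $v_2$. Anisotropy over $k$ then follows from the compact factor. The conditional CSP for anisotropic groups of type ${}^2E_6$ is exactly what is needed.
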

\begin{proof}
    Consider the same real quadratic field $k=\mathbb Q(\alpha)$ as in Lemma~\ref{lem:albert}. Now define the quadratic field extension $E= k(\sqrt{\alpha})$, quaternion algebra $B= (-1,\alpha)_k$ and the same octonion algebras $C$ and $C'$ as defined in the proof of Lemma~\ref{lem:albert}. For the $E_6$ case, we let
    \[
    \mathbf{G} = G(H_3(C),E),\quad \mathbf{G}' = G(H_3(C'),E).
    \]
    For the $E_7$ case, we let
    \[
    \mathbf{G} = G(H_3(C),B),\quad \mathbf{G}' = G(H_3(C'),B).
    \]
    For the $E_8$ case, we let
    \[
    \mathbf{G} = G(H_3(C),C),\quad \mathbf{G}' = G(H_3(C'),C).
    \]
    Here $H_3(C)$ and $H_3(C')$ are formed using the Hermitian form $\langle1^{[3]}\rangle$. Again, $H_3(C)$ and $H_3(C')$ are isomorphic at each finite place $w$, thus $\mathbf{G}_w\cong \mathbf{G}'_w$. On the real places, we have
    \[
    E_{v_1}\cong\mathbb R\times\mathbb R,\quad E_{v_2}\cong\mathbb C,\quad B_{v_1}\cong M_2(\mathbb R),\quad B_{v_2}\cong\mathbb H,
    \]
    and
    \[
    C_{v_1}\cong\mathbb{O}_s,\quad C_{v_2}\cong C'_{v_i}\cong\mathbb{O}.
    \]
    Using the identification of the real forms arising from Tits's magic-square construction in \cite[Sections 3 and 7]{barton2003magic}, in the three cases of $\mathbf{G}$ and $\mathbf{G}'$, we have
    \[
    \mathbf{G}_{v_1}\cong E_{6(6)},\quad \mathbf{G}'_{v_1}\cong E_{6(-26)},\quad \mathbf{G}_{v_2}\cong\mathbf{G}'_{v_2}\cong E_{6(-78)},
    \]
    or
    \[
    \mathbf{G}_{v_1}\cong E_{7(7)},\quad \mathbf{G}'_{v_1}\cong E_{7(-25)},\quad \mathbf{G}_{v_2}\cong\mathbf{G}'_{v_2}\cong E_{7(-133)},
    \]
    or
    \[
    \mathbf{G}_{v_1}\cong E_{8(8)},\quad \mathbf{G}'_{v_1}\cong E_{8(-24)},\quad \mathbf{G}_{v_2}\cong\mathbf{G}'_{v_2}\cong E_{8(-248)}.
    \]
    For the $E_7$ and $E_8$ groups, the congruence subgroup property is known, so Lemma~\ref{lem:CSP} and Corollary~\ref{cor:CSP} give torsion-free cocompact lattices with isomorphic profinite completions. The same conclusion holds for the $E_6$ pair under the stated assumption on the congruence subgroup property for anisotropic groups of type $E_6$.
\end{proof}
\subsection{Classification of Profinite Rigidity across Real Forms}
Let $G$ and $G'$ be non-isomorphic simply connected simple real Lie groups of real rank at least $2$ and of the same absolute Dynkin type. We now classify when they can contain torsion-free lattices with isomorphic profinite completions.

\begin{theorem}\label{thm:rigidity}
    Let $G$ and $G'$ be non-isomorphic simple real Lie groups of the same absolute Dynkin type. Also, suppose they are simply connected, with real ranks at least $2$.
    
    \begin{enumerate}
        \item If, up to interchanging $G$ and $G'$, the pair falls into one of the following families, then there exist torsion-free lattices $\Gamma<G$ and $\Gamma'<G'$ such that $\widehat\Gamma\cong\widehat{\Gamma'}$:
        \begin{itemize}
            \item $G=\mathrm{Spin}(m,n)$ and $G'=\mathrm{Spin}(m',n')$, where $m+n=m'+n'\geq8$ and
            \[
            mn\equiv m'n'\pmod4;
            \]
            \item $G=\mathrm{SU}(m,n)$ and $G'=\mathrm{SU}(m',n')$, where $m+n=m'+n'\geq7$ and
            \[
            mn\equiv m'n'\pmod2;
            \]
            \item $G=\mathrm{Sp}(m,n)$ and $G'=\mathrm{Sp}(m',n')$, where $m+n=m'+n'\geq6$;
            \item $G=\mathrm{SL}(4n,\mathbb R)$ and $G'=\mathrm{SL}(2n,\mathbb H)$, where $n\geq2$;
            \item $(G,G')=(E_{6(-14)},E_{6(2)})$, $(E_{7(-25)},E_{7(7)})$, or $(E_{8(-24)},E_{8(8)})$.
        \end{itemize}

        \item Assuming the congruence subgroup property for anisotropic groups of type $E_6$, the additional pair $(E_{6(-26)},E_{6(6)})$ also satisfies (1).

        \item In all other cases, lattices $\Gamma<G$ and $\Gamma'<G'$ have non-isomorphic profinite completions.
    \end{enumerate}
\end{theorem}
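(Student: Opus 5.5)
The plan has two halves. For part (1) I will specialize the earlier constructions to a single non-compact archimedean factor. For part (3) I will combine the invariants of Kammeyer--Kionke and Echtler--Kammeyer into a check on the list of real forms.

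\textbf{Existence.} I will take $r_-=1$ and $r_c=r_s=0$ in Theorem~\ref{thm:non_rigidity}, so that the ambient Lie group is simple. I will also use Propositions~\ref{prop:spin_d}, \ref{prop:slh}, \ref{Pro:cocompt} and the subsequent $E_7/E_8/E_6$ proposition. The real work is arithmetic bookkeeping.
\begin{itemize}
\item[(a)] \emph{Spin groups.} Write $m'=m+j$ and $n'=n-j$. Then
\[
m'n'-mn=j(n-m)-j^2 .
\]
From this I will check that $mn\equiv m'n'\pmod 4$ forces one of three situations: $4\mid(m-m')$, or $4\mid(m-n')$, or $m\equiv n$, $m'\equiv n'\pmod 4$ with $m-m'\equiv 2\pmod 4$. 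The first two are covered by Theorem~\ref{thm:non_rigidity}(1) with $r_-=1$. The third is Proposition~\ref{prop:spin_d}.
\item[(b)] \emph{Special unitary groups.} Similarly, $mn\equiv m'n'\pmod 2$ should reduce to $2\mid(m-m')$ or $2\mid(m-n')$, which is Theorem~\ref{thm:non_rigidity}(2).
\item[(c)] \emph{Symplectic groups.} The pairs $\mathrm{Sp}(m,n)$, $\mathrm{Sp}(m',n')$ come from Theorem~\ref{thm:non_rigidity}(3).
\item[(d)] \emph{Thresholds.} The bounds $d\ge 8,7,6$ are exactly what makes the real-rank and CSP hypotheses ($\min(m,n,m',n')\ge2$) available while the two real forms are genuinely distinct.
\end{itemize}
Part (2) is the conditional $E_6$ proposition verbatim.

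\textbf{Non-existence.} Suppose $\Gamma<G$ and $\Gamma'<G'$ are torsion-free lattices with $\widehat\Gamma\cong\widehat{\Gamma'}$. Since the real rank is at least $2$, Margulis arithmeticity applies. It gives $k$-groups $\mathbf G$, $l$-groups $\mathbf H$ (simply connected, absolutely almost simple) such that $\Gamma,\Gamma'$ are isomorphic to arithmetic subgroups. Moreover, $G\cong\mathbf G(k_v)$ for a unique isotropic archimedean place $v$, all other archimedean factors are compact, and similarly for $\mathbf H$. I will derive two invariants.
\begin{itemize}
\item[(i)] \emph{Inner class.} Proposition~\ref{inner_twist} shows that $G$ and $G'$ are inner twists of each other.
\item[(ii)] \emph{Dimension of the symmetric space mod $4$.} Theorem~\ref{thm:adelic} gives $\mathbf G\times_k\mathbb A^f_k\cong\mathbf H\times_l\mathbb A^f_l$ over an isomorphism $\mathbb A^f_l\cong\mathbb A^f_k$. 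This isomorphism respects the decomposition $\mathbb A^f=\prod_p(\,\cdot\otimes\mathbb Q_p)$. Hence the restrictions of scalars $\mathrm{Res}_{k/\mathbb Q}\mathbf G$ and $\mathrm{Res}_{l/\mathbb Q}\mathbf H$ have isomorphic Lie algebras over every $\mathbb Q_p$. Their real points are $G$, respectively $G'$, times compact groups, so their symmetric spaces are those of $G$ and $G'$. Proposition~\ref{prop:symm_space} then yields $\dim X\equiv\dim X'\pmod 4$.
\end{itemize}

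The remaining step, which I expect to be the main obstacle, is the case check against Table~\ref{tab:classical-real-forms} and the list of exceptional real forms. For each absolute type I group the real forms of real rank at least $2$ by inner class, then discard pairs whose symmetric space dimensions differ mod $4$. The expected outcomes are as follows.
\begin{itemize}
\item[(a)] \emph{Type $A$.} $\mathrm{SL}(n,\mathbb R)$ and $\mathrm{SL}(n/2,\mathbb H)$ form one outer class, and the $\mathrm{SU}(p,q)$ another. Their symmetric space dimensions are $\tfrac{(n-1)(n+2)}{2}$, $(n-1)(n/2+\ldots)$ and $2pq$. These should leave exactly $n\equiv0\pmod4$ in the first class and $pq$ of equal parity in the second.
\item[(b)] \emph{Types $B,D$.} The symmetric space dimension is $mn$, and $\mathrm{SO}^*(2n)$ lies in a different inner class from $\mathrm{Spin}(m,n)$ whenever $mn$ is odd. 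I must handle the triality subtleties for $D_4$ and $\mathrm{SO}^*$, where triality makes $\mathrm{SO}^*(8)\cong\mathrm{Spin}(2,6)$.
\item[(c)] \emph{Type $C$.} $\mathrm{Sp}(2n,\mathbb R)$ versus $\mathrm{Sp}(p,q)$ should be excluded, presumably by the mod $4$ dimension or by a finer local obstruction.
\item[(d)] \emph{Exceptional types.} For $E_6$ the inner classes are $\{E_{6(6)},E_{6(-26)}\}$ and $\{E_{6(2)},E_{6(-14)}\}$; the types $E_7$, $E_8$, $F_4$, $G_2$ are checked likewise.
\end{itemize}
If the mod-$4$ invariant does not suffice in some residual case, notably $\mathrm{Sp}(2n,\mathbb R)$ versus $\mathrm{Sp}(p,q)$, I would fall back on the local Tits indices at finite places together with the reciprocity law for the Brauer or Hasse invariants. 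Concretely, the product of local invariants over all places must vanish by Albert--Brauer--Hasse--Noether, so the finite places, which agree for $\mathbf G$ and $\mathbf H$, force the real invariants to agree. In the $C_n$ case this compares $[B_v]\in\operatorname{Br}(\mathbb R)$, which distinguishes the split form $\mathrm{Sp}(2n,\mathbb R)$ from $\mathrm{Sp}(p,q)$.
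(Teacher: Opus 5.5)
Your strategy is the paper's. Existence comes from Theorem~\ref{thm:non_rigidity} with $r_-=1$, Propositions~\ref{prop:spin_d} and \ref{prop:slh}, and the exceptional constructions. Your reduction of $mn\equiv m'n'\pmod 4$ (resp.\ $\pmod 2$) to the divisibility hypotheses of those results checks out. Non-existence goes through Proposition~\ref{inner_twist}, the mod-$4$ symmetric-space invariant, and a Brauer reciprocity argument for the leftover cases.

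\textbf{The gap: a missed type $D_\ell$ family.} Your case check in type $D_\ell$ is incomplete. You only note that $\mathrm{Spin}^*(2\ell)$ lies in a different inner class from $\mathrm{Spin}(m,n)$ with $m,n$ odd. However, $\mathrm{Spin}^*(2\ell)$ and $\mathrm{Spin}(m,n)$ with $m,n$ even are both inner to the compact form. Moreover $mn\equiv 0\equiv \ell(\ell-1)\pmod 4$ whenever $\ell\equiv 0,1\pmod 4$; for example $\mathrm{Spin}^*(10)$, $\mathrm{Spin}(2,8)$ and $\mathrm{Spin}(4,6)$ have symmetric-space dimensions $20,16,24$. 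Neither of your two invariants separates these pairs, and your Brauer fallback is formulated only for $C_n$ via $[B_v]$.

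\textbf{What is needed for that family.} The missing argument is the second half of Lemma~\ref{lem:rigidity_brauer}:
\begin{itemize}
\item For $\ell\ge 5$, realize both groups by central simple algebras with orthogonal involution.
\item At corresponding finite places, the local group isomorphisms induce isomorphisms of the algebras with involution, via the Lie algebra of skew elements.
\item The compact non-distinguished real places carry the split class.
\item Reciprocity then forces the classes at the distinguished place to agree. This contradicts $[M_{2\ell}(\mathbb R)]=0\neq[M_\ell(\mathbb H)]$.
\end{itemize}
The bound $\ell\ge 5$ is essential. At $\ell=4$, triality means the Brauer class of the vector representation is no longer an invariant of the group. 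This matches your own remark that $\mathrm{Spin}^*(8)\cong\mathrm{Spin}(2,6)$, so $\mathrm{Spin}^*(8)$ versus $\mathrm{Spin}(4,4)$ belongs to part (1) rather than being excluded.

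\textbf{A smaller omission in the $C_n$ argument.} Agreement at the finite places only forces the \emph{sum} of the real invariants of $k$ and of $l$ to agree. The compact form $\mathrm{Sp}(d)$ has invariant $1/2$, so to isolate the distinguished place you need $k$ and $l$ to have the same number of non-distinguished real places. This holds because $\mathbb A^f_k\cong\mathbb A^f_l$ forces $[k:\mathbb Q]=[l:\mathbb Q]$ and both fields are totally real; the paper states this explicitly.
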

\begin{remark}\label{remk:rigidity}
    The $\operatorname{Spin}$, $\operatorname{SU}$ or $\operatorname{SL}$, and $\operatorname{Sp}$ groups occurring in Theorem \ref{thm:rigidity} have absolute Dynkin types $B_\ell$ or $D_\ell$, $A_\ell$, and $C_\ell$, respectively. The noncompact real forms of classical Lie algebras are summarized in Table \ref{tab:classical-real-forms} in Appendix \ref{appendix:a}.
   \end{remark}

Before proving the theorem, we exclude two families using their Brauer classes.

\begin{lemma}\label{lem:rigidity_brauer}
    Suppose either
    \[
    G\cong\mathrm{Sp}(m,n),
    \qquad
    G'\cong\mathrm{Sp}(2(m+n),\mathbb R),
    \]
    or
    \[
    G\cong\mathrm{Spin}(m,n),
    \qquad
    G'\cong\mathrm{Spin}^*(m+n),
    \]
    where in the second case $m+n$ is even and at least $10$. Assume that both groups have real rank at least $2$. Then for any lattices $\Gamma<G$ and $\Gamma'<G'$,
    \[
    \widehat\Gamma\not\cong\widehat{\Gamma'}.
    \]
\end{lemma}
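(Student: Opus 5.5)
Suppose, for contradiction, that $\widehat\Gamma\cong\widehat{\Gamma'}$ for some lattices $\Gamma<G$, $\Gamma'<G'$. Since both groups are simple of real rank at least $2$, Margulis arithmeticity lets us realize $\Gamma$ and $\Gamma'$, up to passing to finite index, as arithmetic subgroups of simply connected absolutely almost simple groups $\mathbf G/k$ and $\mathbf H/l$. Each has a unique isotropic archimedean place $v$ (resp.\ $w$), with $\mathbf G(k_v)\cong G$ and $\mathbf H(l_w)\cong G'$. Passing to finite index is harmless, because by Lemma~\ref{Experts1} finite-index subgroups correspond to open subgroups of the completion, and those correspond under the isomorphism.

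By Theorem~\ref{thm:adelic} there is an isomorphism $j\colon\A^f_l\to\A^f_k$ together with an isomorphism $\mathbf G\times_k\A^f_k\cong\mathbf H\times_l\A^f_l$ over $j$. We will use this to compare the local Brauer invariants of the Tits algebras of $\mathbf G$ and $\mathbf H$ at finite places.

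In the symplectic case, $\mathbf G$ is of type $C_d$, so $\mathbf G=\mathrm{Sp}(h,D)$ for a quaternion algebra $D/k$ with involution. Its class $[D]\in\operatorname{Br}(k)[2]$ is an invariant of $\mathbf G$, namely the Tits algebra, and the same holds for $\mathbf H$ with a quaternion algebra $D'/l$. Because the Tits algebra is canonically attached to the group, local isomorphisms $\mathbf G_{v'}\cong\mathbf H_{w'}$ at matched finite places give $\operatorname{inv}_{v'}[D]=\operatorname{inv}_{w'}[D']$. We need to check that the adelic isomorphism matches places compatibly with the field isomorphisms $k_{v'}\cong l_{w'}$; this should follow from $j$ being an isomorphism of topological rings.

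Next we look at the archimedean places. For $\mathrm{Sp}(m,n)$, $D$ ramifies at $v$. At every other archimedean place of $k$, $\mathbf G$ is compact, hence of the form $\mathrm{Sp}(d)$, so $D$ ramifies there too; complex places would give a noncompact factor, so $k$ is totally real. Thus $D$ ramifies at all $[k:\mathbb Q]$ real places. For $\mathbf H$, $D'$ splits at $w$, where the group is $\mathrm{Sp}(2d,\mathbb R)$, and ramifies at the other $[l:\mathbb Q]-1$ real places. The finite invariants agree, so Albert--Brauer--Hasse--Noether gives
\[
[k:\mathbb Q]\equiv [l:\mathbb Q]-1 \pmod 2 .
\]
On the other hand, $\A^f_k\cong\A^f_l$ implies that $k$ and $l$ are locally isomorphic, hence arithmetically equivalent. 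In particular $[k:\mathbb Q]=[l:\mathbb Q]$, since the degree equals $\sum_{\mathfrak p\mid p}[k_\mathfrak p:\mathbb Q_p]$, which is read off from the adele ring. This gives a contradiction.

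The spin case runs the same way. Now $\mathbf G=\mathrm{Spin}(A,\sigma)$, where the class $[A]\in\operatorname{Br}(k)[2]$ of the algebra with orthogonal involution is a Tits-algebra invariant. The real forms $\mathrm{Spin}(m,n)$ and compact $\mathrm{Spin}(d)$ come from split $A$, while $\mathrm{Spin}^*(2\ell)$ comes from $A\otimes\mathbb R\cong M_{\ell}(\mathbb H)$, which is ramified. So $[A]$ has trivial invariants at all real places, whereas $[A']$ ramifies at exactly one real place. Equal finite invariants and reciprocity then give a contradiction.

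We expect the main obstacle to be the rigour of this step, in two respects. For $D_\ell$ the Tits algebras live in the center, which is $\mu_2\times\mu_2$ or $\mu_4$. When $\ell$ is even, triality or outer automorphisms could permute the classes $[A]$ and those of the half-spin Clifford components, so the local isomorphisms need not preserve $[A]$ itself. We would handle this by using the whole set of Tits classes, which is preserved setwise. Its archimedean invariant pattern is still invariant: for $\mathrm{Spin}(m,n)$ the Clifford components have invariants determined by $mn$ modulo $4$. One then checks that reciprocity is violated for every choice of matching, using $\ell\ge5$ to avoid triality. The nonzero archimedean invariants of the Clifford components also need to be tracked carefully.
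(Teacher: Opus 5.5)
Your argument is correct and is essentially the paper's. Adelic superrigidity matches the $2$-torsion Brauer/Tits classes at finite places. The non-distinguished real places are compact on both sides and equal in number because $[k:\mathbb Q]=[l:\mathbb Q]$, which is what your parity count encodes. Reciprocity then forces equal invariants at the distinguished place, and this fails for $\mathrm{Sp}(m,n)$ versus $\mathrm{Sp}(2d,\mathbb R)$ and for $\mathrm{Spin}(m,n)$ versus $\mathrm{Spin}^*(2\ell)$.

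The obstacle you flag at the end does not arise. For $\ell\ge5$ the only nontrivial diagram automorphism of $D_\ell$ swaps the two half-spin nodes and fixes the vector node. Hence any local isomorphism carries $[A]$ to $[A']$, and no bookkeeping of the Clifford components is needed. The paper obtains the same fact from Jacobson's theorem that isomorphisms of the Lie algebras of skew elements extend to isomorphisms of the algebras with involution.
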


\begin{proof}
Suppose to the opposite that $\widehat{\Gamma}\cong\widehat{\Gamma'}$. By Margulis arithmeticity, after replacing $\Gamma$, $\Gamma'$ by corresponding finite-index subgroups, we may assume that they are arithmetic subgroups of simply connected absolutely almost simple groups $\mathbf G/k$ and $\mathbf G'/l$, respectively, where $k$ and $l$ are totally real. There are distinguished real places $v$ of $k$ and $v'$ of $l$ such that
\[
\mathbf G(k_v)\cong G,\qquad \mathbf G'(l_{v'})\cong G',
\]
while the groups at all other real places are compact. By Theorem~\ref{thm:adelic}, the finite places of $k$ and $l$ are matched so that the corresponding local fields and algebraic groups are isomorphic. The adeles isomorphism also implies the degrees $[k:\mathbb{Q}] = [l:\mathbb{Q}]$.

In the symplectic case, let $\mathbf G_0$ be the split simply connected group of the corresponding type $C_d$. Following \cite[Proposition 11]{echtler2024bounded}, the $k$- and $l$-forms $\mathbf G$ and $\mathbf G'$ determine classes
\[
\alpha\in H^1(k,\operatorname{Ad}\mathbf G_0),
\qquad
\beta\in H^1(l,\operatorname{Ad}\mathbf G_0).
\]
For $F = k$ or $l$, the Kummer sequence $1\to\mu_2\to\mathbf G_0\to\operatorname{Ad}\mathbf G_0\to 1$ yields a boundary map $\delta_F:H^1(F,\operatorname{Ad}\mathbf G_0)\to H^2(F,\mu_2)$ to Tits classes, which are regarded as Brauer classes via the Kummer identification $H^2(F,\mu_2)\cong \operatorname{Br}(F)[2]$. At corresponding finite places, the localizations of $\delta_k(\alpha)$ and $\delta_l(\beta)$ agree by adelic superrigidity. Since $k/\mathbb{Q}$ and $l/\mathbb{Q}$ are totally real and have the same degree, they have exactly the same number of nondistinguished real places, where both groups are the compact real form $\mathrm{Sp}(d)$, so the corresponding local classes agree there as well. The Albert--Brauer--Hasse--Noether reciprocity law therefore forces 
\[
\delta_{\mathbb R}(\alpha_v) = \delta_{\mathbb R}(\beta_{v'})\in \operatorname{Br}(\mathbb{R})\cong \mathbb{Z}/2\mathbb{Z}
\]
at the distinguished real place. However, the split form $\mathrm{Sp}(2d,\mathbb R)$ has trivial class in $\operatorname{Br}(\mathbb{R})$, whereas every noncompact quaternionic form $\mathrm{Sp}(m,n)$ has the nontrivial class. This is a contradiction.

For the spin case, following \cite[Proposition 14]{echtler2024bounded}, realize $\mathbf G$ and $\mathbf G'$ by central simple algebras $A/k$ and $A'/l$ equipped with involutions of the first kind and orthogonal type. At corresponding finite places, the local isomorphisms of the algebraic groups induce isomorphisms of the associated Lie algebras. By \cite[Chapter X, Theorem 12]{jacobson2013lie}, an isomorphism between the Lie algebra of skew elements associated with central simple algebras with involution extends to an isomorphism of the underlying algebras that identifies the involutions. Hence, at corresponding finite places, their local Brauer classes agree. The same is true at every nondistinguished real place, where both groups are compact. Hence, the Albert--Brauer--Hasse--Noether reciprocity law gives
\[
[A_v]=[A'_{v'}]\in\operatorname{Br}(\mathbb R)\cong \mathbb{Z}/2\mathbb{Z}
\]
at the distinguished real place. However, for a real orthogonal form $\mathrm{Spin}(m,n)$ the underlying central simple algebra is split, so $[A_v]=0$, whereas $\mathrm{Spin}^*(2d)$ corresponds to the quaternionic algebra $M_d(\mathbb H)$ and therefore has the nontrivial class. This is again a contradiction.
\end{proof}

\begin{proof}[Proof of Theorem~\ref{thm:rigidity}]
    The existence statements in (1) are supplied by Theorem~\ref{thm:non_rigidity}, Proposition~\ref{prop:spin_d}, Proposition~\ref{prop:slh}, and the exceptional constructions above, which in particular cover the only $E_8$ pair. The conditional $E_6$ case is exactly (2).

    For (3), suppose that lattices \(\Gamma<G\) and \(\Gamma'<G'\) satisfy \(\widehat\Gamma\cong\widehat{\Gamma'}\). No such pair occurs in types \(F_4\) or \(G_2\). Indeed, type \(G_2\) has a unique noncompact real form, namely \(G_{2(2)}\). Type \(F_4\) has two noncompact real forms, \(F_{4(-20)}\) and \(F_{4(4)}\), but the former has real rank \(1\). Thus neither type admits two non-isomorphic real forms both of real rank at least \(2\).
    
    By Proposition~\ref{inner_twist}, $G$ and $G'$ must either both be inner forms or both be outer forms. This excludes the inner--outer pairs of types $A_\ell$, $D_\ell$, and $E_6$. The relevant classical pairs are listed in Table \ref{tab:classical-real-forms}, while the excluded $E_6$ pairs are precisely those with one member in $\{E_{6(-26)},E_{6(6)}\}$ and the other in $\{E_{6(-14)},E_{6(2)}\}$.

    Applying restriction of scalars to $\mathbb Q$, Theorem~\ref{thm:adelic} and Proposition~\ref{prop:symm_space} imply that the dimensions of the associated symmetric spaces are congruent modulo $4$. For the classical real forms, the dimensions used below are
    recorded in Table~\ref{tab:classical-real-forms}. This excludes the following remaining cases:
    \begin{itemize}
        \item For $G\cong\mathrm{Spin}(m,n)$ and $G'\cong\mathrm{Spin}(m',n')$ with $d = m+n=m'+n'$ odd, both groups have absolute Dynkin type \(B_{(d-1)/2}\). We exclude the case in which one of $m,n$ is $0\pmod4$ and one of $m',n'$ is $2\pmod4$, because
        \[
        \dim X=mn\equiv0\pmod4,
        \qquad
        \dim X'=m'n'\equiv2\pmod4.
        \]
        \item For $G\cong\mathrm{SU}(m,n)$ and $G'\cong\mathrm{SU}(m',n')$ with $d = m+n=m'+n'$, both groups have absolute Dynkin type \(A_{d-1}\). We exclude the case in which $m,n$ are both even and $m',n'$ are both odd, because
        \[
        \dim X=2mn\equiv0\pmod4,
        \qquad
        \dim X'=2m'n'\equiv2\pmod4.
        \]
        \item For $G\cong\mathrm{SL}(2n,\mathbb R)$ and $G'\cong\mathrm{SL}(n,\mathbb H)$, both groups have absolute Dynkin type \(A_{2n-1}\). We exclude the case in which $n$ is odd, because
        \[
        \dim X=(2n-1)(n+1),
        \qquad
        \dim X'=(2n+1)(n-1),
        \]
        and $\dim X-\dim X'=2n\equiv2\pmod4$.
        \item For the exceptional type \(E_7\), we exclude the pairs involving $E_{7(-5)}$ and either $E_{7(-25)}$ or $E_{7(7)}$, since their symmetric-space dimensions are respectively $64$, $54$, and $70$.
    \end{itemize}
    After these exclusions, the only remaining pairs are exactly the two families treated in Lemma~\ref{lem:rigidity_brauer}: the pair $\operatorname{Sp}(m,n)$ and $\operatorname{Sp}(2d, \mathbb R)$ of type $C_d$, and the pair $\operatorname{Spin}(m,n)$ and $\operatorname{Spin}^*(d)$ of type $B_{(d-1)/2}$ or $D_{d/2}$. The constructions for (1), (2) in the preceding subsections along with these exclusions for (3) exhaust all pairs of real forms.
\end{proof}

\section{K\"ahlerness and Profinite Rigidity of Arithmetic Lattices}
Based on the results developed in the previous sections, we construct two families of finitely presented and residually finite groups $G_1$ and $G_2$ so that they have the same profinite completions $\widehat{G}_1\cong\widehat{G}_2$,  moreover $G_1$ is a K\"ahler group and $G_2$ is non-K\"ahler.

\subsection{The K\"ahler lattice}

First of all, we recall the following proposition originally due to Guichardet and Wigner but reformulated by Wienhard:

\begin{proposition}[{\cite[Chapter 3, Proposition 1.2]{wienhard2005bounded},\cite{guichardet1978cohomologie}}] \label{prop:Anna}
Let $X=G/K$ be an irreducible Riemannian symmetric space, where $K$ is a maximal compact subgroup of a connected semisimple, with finite center and no compact factors, real Lie group $G$. Then $X$ is Hermitian if it satisfies one of the following equivalent properties:
\begin{enumerate}
    \item $X$ admits an $\operatorname{Isom}(X)^{\circ}$-invariant Kähler form;
    \item $X$ carries an $\operatorname{Isom}(X)^{\circ}$-invariant complex structure;
    \item The center $\mathfrak{c}(\mathfrak{k})$ of $\mathfrak{k}$ is nontrivial, where $\mathfrak{k}$ is the Lie algebra of $K$.
\end{enumerate}
Here $\operatorname{Isom}(X)^{\circ}$ is the identity component of the isometry group of $X$.
\end{proposition}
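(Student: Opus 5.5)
The plan is to establish the equivalences (1)$\Leftrightarrow$(2)$\Leftrightarrow$(3) through the standard structure theory of symmetric spaces. Write $\mathfrak g=\mathfrak k\oplus\mathfrak p$ for the Cartan decomposition, so that $T_oX\cong\mathfrak p$. Irreducibility of $X$ means that $\operatorname{ad}(\mathfrak k)$ acts irreducibly on $\mathfrak p$, and hence so does $K$. We may also identify $\operatorname{Isom}(X)^{\circ}$ with $G/Z(G)$; this uses that $G$ has finite center and no compact factors. Under this identification, the isotropy group at $o$ is the image of $K$. Consequently, $G$-invariant tensor fields on $X$ correspond bijectively to $\operatorname{Ad}(K)$-invariant tensors on $\mathfrak p$.

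\textbf{(3)$\Rightarrow$(2)$\Rightarrow$(1).} Suppose $\mathfrak c(\mathfrak k)\neq0$. Take $Z\in\mathfrak c(\mathfrak k)$ nonzero. Then $\operatorname{ad}(Z)|_{\mathfrak p}$ commutes with $\operatorname{ad}(\mathfrak k)$ and is skew-symmetric for the Killing form $B$. By Schur's lemma over $\mathbb R$, the commutant of an irreducible real representation is $\mathbb R$, $\mathbb C$ or $\mathbb H$. A nonzero skew element cannot be a real scalar, so after rescaling we may take $J:=\operatorname{ad}(Z)|_{\mathfrak p}$ with $J^2=-1$; this is the main point of the argument. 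The endomorphism $J$ is $\operatorname{Ad}(K)$-invariant, because $K$ is connected and $Z$ is central in $\mathfrak k$. It therefore translates to a $G$-invariant almost complex structure. The form $\omega(X,Y)=B(JX,Y)$ is then a $G$-invariant nondegenerate $2$-form. Both $J$ and $\omega$ are parallel: invariant tensors on a symmetric space are parallel, because the geodesic symmetry $s_o$ acts by $-1$ on $\mathfrak p$ and therefore forces $\nabla T=0$ for every $G$-invariant tensor $T$ of even degree that is $s_o$-invariant. Hence $J$ is integrable and $\omega$ is Kähler.

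\textbf{(1)$\Rightarrow$(3).} Conversely, let $\omega$ be an invariant Kähler form. It corresponds to a nondegenerate $\operatorname{Ad}(K)$-invariant skew form on $\mathfrak p$. Define $A\in\operatorname{End}(\mathfrak p)$ by $B(AX,Y)=\omega(X,Y)$. Then $A$ is $\operatorname{Ad}(K)$-equivariant, skew and invertible. Since $[\mathfrak p,\mathfrak p]\subset\mathfrak k$ and $\mathfrak g$ is simple without compact factors, the bracket map $\mathfrak p\otimes\mathfrak p\to\mathfrak k$ is surjective onto $[\mathfrak p,\mathfrak p]=\mathfrak k$, and $\mathfrak k$ acts faithfully on $\mathfrak p$. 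Using the Killing form, the equivariant skew map $A$ determines an element of $\mathfrak{so}(\mathfrak p)$ commuting with $\operatorname{ad}(\mathfrak k)$. The key fact needed here is that $\mathfrak{so}(\mathfrak p)^{\mathfrak k}\cap\operatorname{ad}(\mathfrak k)$ is nonzero whenever the commutant is nontrivial. This follows from the classical theorem that $\operatorname{ad}(\mathfrak k)|_{\mathfrak p}$ is the full holonomy algebra and that every parallel skew endomorphism of an irreducible symmetric space lies in the holonomy algebra (\'E.~Cartan, Kostant). Hence $A$, or its normalized complex structure, equals $\operatorname{ad}(Z)$ for some $Z\in\mathfrak k$ commuting with $\mathfrak k$, so $\mathfrak c(\mathfrak k)\neq0$.

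\textbf{(2)$\Rightarrow$(1).} Given an invariant complex structure $J$, the averaged form $\omega(X,Y)=B(JX,Y)$ is invariant and skew, because the irreducible commutant forces $J$ to be $B$-orthogonal. We are then reduced to the previous step.

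I expect the main obstacle to be the holonomy step in (1)$\Rightarrow$(3), namely showing that an invariant complex structure is inner, i.e.\ is realized as $\operatorname{ad}(Z)$. I would first attempt this via $J\in\mathfrak{hol}=\operatorname{ad}(\mathfrak k)|_{\mathfrak p}$. Alternatively, one can cite Guichardet--Wigner and Wienhard directly, as the paper does, since the statement is quoted from there.
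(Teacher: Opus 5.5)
The paper does not prove this proposition; it only cites Guichardet--Wigner and Wienhard, so the question is whether your outline stands on its own. Your directions (3)$\Rightarrow$(2)$\Rightarrow$(1) and (2)$\Rightarrow$(1) are sound. $G$-invariant tensors of even degree are parallel, and Schur's lemma forces any $\mathfrak k$-equivariant $J$ with $J^2=-1$ to be $B$-orthogonal. In (3)$\Rightarrow$(2), the clean reason you can rescale to $J^2=-1$ is that $\operatorname{ad}(Z)^2|_{\mathfrak p}$ is symmetric and $\mathfrak k$-equivariant, hence a negative scalar by irreducibility.

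The genuine gap is in (1)$\Rightarrow$(3). Because $\mathfrak k$ acts faithfully on $\mathfrak p$, the space $\mathfrak{so}(\mathfrak p)^{\mathfrak k}\cap\operatorname{ad}(\mathfrak k)|_{\mathfrak p}$ is exactly $\operatorname{ad}(\mathfrak c(\mathfrak k))|_{\mathfrak p}$. Your ``key fact'' is therefore just statement (3) again. The result you invoke to obtain it --- that every parallel skew endomorphism of an irreducible symmetric space lies in the holonomy algebra --- is not something you can quote in that form. It is false for irreducible Riemannian manifolds in general. On a Calabi--Yau manifold, $J$ is parallel and skew, yet $\mathfrak{hol}=\mathfrak{su}(n)$ and $J\notin\mathfrak{su}(n)$. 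On a hyperk\"ahler manifold, none of $I,J,K$ lies in $\mathfrak{sp}(n)$. The attribution does not cover it either. Cartan's theorem gives $\mathfrak{hol}=\operatorname{ad}(\mathfrak k)|_{\mathfrak p}$, and Kostant's results concern the endomorphisms $A_X$ of Killing fields $X$. Those only apply to $J$ once you know $J=\operatorname{ad}(Z)|_{\mathfrak p}$ for some $Z\in\mathfrak k$, which is what has to be proved. So some structure specific to $(\mathfrak g,\mathfrak k)$ must enter, and your proposal does not supply it.

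The standard way to close the gap is the derivation trick.
\begin{enumerate}
\item Normalize $A$ as above to a $\mathfrak k$-equivariant, $B$-orthogonal $J$ with $J^2=-1$.
\item For $u,v\in\mathfrak p$ and $W\in\mathfrak k$, invariance of $B$, equivariance and orthogonality give $B([Ju,Jv],W)=-B(Ju,J[W,v])=-B(u,[W,v])=B([u,v],W)$. Since $B|_{\mathfrak k}$ is definite, $[Ju,Jv]=[u,v]$. Replacing $v$ by $Jv$ yields $[Ju,v]+[u,Jv]=0$.
\item Hence the map $D$ with $D|_{\mathfrak k}=0$ and $D|_{\mathfrak p}=J$ is a derivation of $\mathfrak g$.
\item Since $\mathfrak g$ is semisimple, $D=\operatorname{ad}(Z)$ for some $Z\in\mathfrak g$. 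As $D$ commutes with the Cartan involution $\theta$, injectivity of $\operatorname{ad}$ gives $\theta Z=Z$, so $Z\in\mathfrak k$.
\item Finally $[Z,\mathfrak k]=D(\mathfrak k)=0$, so $0\neq Z\in\mathfrak c(\mathfrak k)$.
\end{enumerate}

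Alternatively, your holonomy route can be repaired, but only by using that $X$ is not Ricci-flat. For an irreducible K\"ahler metric, the $\mathbb RJ$-component of the curvature operators $R(u,v)\in\mathfrak u(n)$ is governed by the Ricci form. Here $\operatorname{Ric}=-\tfrac12B|_{\mathfrak p}\neq0$, and together with $[\mathfrak{hol},\mathfrak{hol}]\subset\mathfrak{su}(n)$ and Schur's lemma this forces $J$ into the center of $\mathfrak{hol}=\operatorname{ad}(\mathfrak k)|_{\mathfrak p}$.
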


\begin{proposition}\label{prop:G1-kahler}
  Let  $G_1$ be a torsion-free cocompact lattice in $\prod_{i=1}^k\operatorname{Spin}(n_i,2)^{r_i}$ for integers $n_i\geq 3, r_i\geq 1$. Then $G_1$ is a K\"ahler group.
\end{proposition}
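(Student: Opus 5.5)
The plan is to show that the quotient $M:=G_1\backslash X$ is a compact Kähler manifold with $\pi_1(M)\cong G_1$, where $X$ is the Riemannian symmetric space of the ambient group. Put $L:=\prod_{i=1}^k\operatorname{Spin}(n_i,2)^{r_i}$ and let $K\subset L$ be a maximal compact subgroup. Then $X=L/K$ splits as a Riemannian product
\[
X=\prod_{i=1}^k \bigl(\operatorname{Spin}(n_i,2)/K_i\bigr)^{r_i},\qquad K_i\cong (\operatorname{Spin}(n_i)\times\operatorname{Spin}(2))/\{\pm1\}.
\]
For each factor $X_i=\operatorname{Spin}(n_i,2)/K_i$ (type IV domain, $n_i\ge3$), I would verify condition (3) of Proposition~\ref{prop:Anna}. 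Here $\operatorname{Spin}(n_i,2)$ is connected and semisimple with finite center. It is simple for $n_i\ge3$ (the case $n_i=2$ would be excluded since $\mathfrak{so}(2,2)$ is not simple), and it has no compact factors. The Lie algebra $\mathfrak k_i=\mathfrak{so}(n_i)\oplus\mathfrak{so}(2)$ has center containing $\mathfrak{so}(2)\neq0$. So each $X_i$ carries an $\operatorname{Isom}(X_i)^\circ$-invariant Kähler structure $(g_i,J_i,\omega_i)$.

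Next I would pass to the product. The product metric and complex structure on $X$ are Kähler, with Kähler form $\omega=\sum \mathrm{pr}_j^*\omega_{j}$. They are invariant under $L$, because $L$ acts factorwise through its image in $\prod\operatorname{Isom}(X_j)^\circ$; $\operatorname{Spin}(n_i,2)$ is connected, so its image in the isometry group lies in the identity component. In particular $\omega$ and $J$ are $G_1$-invariant.

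Then I would study the action of $G_1$ on $X$. Since $G_1$ is discrete in $L$ and $K$ is compact, $G_1$ acts properly discontinuously on $X$. Stabilizers are $G_1\cap gKg^{-1}$, which are finite, hence trivial because $G_1$ is torsion-free. So the action is free, and $M=G_1\backslash X$ is a smooth manifold. Cocompactness of $G_1$ in $L$ gives compactness of $M$, since $L\to X$ is surjective and proper. The invariant complex structure and Kähler form descend, because $G_1$ acts by holomorphic isometries, so $M$ is a compact Kähler manifold. Finally, $X$ is contractible (a product of Hadamard manifolds, diffeomorphic to Euclidean space), so $X\to M$ is the universal cover and $\pi_1(M)\cong G_1$. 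Hence $G_1$ is Kähler.

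The main obstacle is the bookkeeping in applying Proposition~\ref{prop:Anna}: checking its hypotheses (simple, finite center, no compact factors) for $\operatorname{Spin}(n_i,2)$, and showing the action of $L$ lands in $\operatorname{Isom}(X)^\circ$, so that invariance of the Kähler form under $G_1$ really follows. Both points are standard once connectedness of the spin group is used. An alternative would be to cite directly that $\operatorname{Spin}(n,2)/K$ is the bounded symmetric domain of type IV.
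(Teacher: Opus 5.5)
Your proposal is correct and follows essentially the same route as the paper: verify $\mathfrak{c}(\mathfrak{k})=\mathfrak{so}(2)\neq0$ to invoke Proposition~\ref{prop:Anna}, use that $\operatorname{Spin}(n_i,2)$ acts through $\operatorname{Isom}(X_i)^\circ$ so that the product K\"ahler structure is invariant, and then use discreteness, torsion-freeness and cocompactness to obtain a free, properly discontinuous action with compact K\"ahler quotient whose fundamental group is $G_1$. Your extra remarks (contractibility of $X$ as a product of Hadamard manifolds, and compactness of the quotient via the surjection $L\to X$) only fill in details that the paper states more briefly.
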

\begin{proof}
 For any integer $n_i\geq 3$, denote by $K_i$ a maximal compact subgroup of $\operatorname{Spin}(n_i,2)$ and $X_i\coloneqq \operatorname{Spin}(n_i,2)/K_i$ the irreducible symmetric space of $\operatorname{Spin}(n_i,2)$. {Identifying
 \[
 \prod_{i=1}^k X_i^{r_i}\cong \bigl(\prod_{i=1}^k \operatorname{Spin}(n_i,2)^{r_i}\bigr)/\bigl(\prod_{i=1}^k K_i^{r_i}\bigr),
 \]
 the product $\prod_{i=1}^k \operatorname{Spin}(n_i,2)^{r_i}$ acts on $\prod_{i=1}^k X_i^{r_i}$ properly. Thus, the action of the lattice $G_1<\prod_{i=1}^k \operatorname{Spin}(n_i,2)^{r_i}$ on $\prod_{i=1}^k X_i^{r_i}$ is properly discontinuous. Moreover, for any $x\in \prod_{i=1}^k X_i^{r_i}$, the stabilizer $\operatorname{Stab}(x)<\prod_{i=1}^k \operatorname{Spin}(n_i,2)^{r_i}$ is compact. The discreteness of $G_1$ then implies that $G_1\cap \operatorname{Stab}(x)$ is finite; as $G_1$ is torsion-free, this intersection is indeed trivial. Thus, the action of $G_1$ on $\prod_{i=1}^k X_i^{r_i}$ is free.
 }
 
 Next, consider the Cartan decomposition of the Lie algebra $\mathfrak{g}$ of $\operatorname{Spin}(n_i,2)$ or $\mathrm{SO}_0(n_i,2)$ $$\mathfrak{g}=\mathfrak{k}\oplus \mathfrak{p},$$ where $\mathfrak{k}=\mathfrak{so}(n_i)\oplus\mathfrak{so}(2)$ is the Lie algebra of $K_i$. Note that the center of $\mathfrak{k}$ is $$\mathfrak{c}(\mathfrak{k})=\mathfrak{c}(\mathfrak{so}(n_i))\oplus\mathfrak{c}(\mathfrak{so}(2))=\mathfrak{so}(2)\neq 0.$$ since $\mathfrak{so}(n_i)$ has trivial center for $n_i\geq 3$ and $\mathfrak{so}(2)$ is $\mathbb R$.  Applying  Proposition \ref{prop:Anna}, we then have $\operatorname{Spin}(n_i,2)$ is of Hermitian type, and $X_i$ admits an $\operatorname{Isom}(X_i)^{\circ}$-invariant complex structure $J_i$ and K\"ahler form $\omega_i$. {The natural action of $\operatorname{Spin}(n_i,2)$ on $X_i$ factors through the adjoint quotient
    \[
    \rho_i:\operatorname{Spin}(n_i,2)\longrightarrow
    \operatorname{PSO}^+(n_i,2)
    \cong \operatorname{Isom}(X_i)^\circ.
    \]
Hence the complex structure $J_i$ and the K\"ahler form $\omega_i$ are $\operatorname{Spin}(n_i,2)$-invariant. The product of these K\"ahler structures gives the K\"ahler structure on $\prod_{i=1}^k X_i^{r_i}$, which is $\prod_{i=1}^k \operatorname{Spin}(n_i,2)^{r_i}$-invariant. As a lattice, $G_1$ thus also preserves the complex structure and K\"ahler form on the product space.}

 Since $G_1$ is a cocompact lattice of  $\prod_{i=1}^k\operatorname{Spin}(n_i,2)^{r_i}$, $G_1\backslash \prod_{i=1}^k X_i^{r_i}$ is a compact K\"ahler manifold. The free and properly discontinuous action of $G_1$ on the simply connected space $\prod_{i=1}^k X_i^{r_i}$ implies
 $$\pi_1( G_1\backslash \prod_{i=1}^k X_i^{r_i})\cong G_1.$$
 Therefore, $G_1$ is a K\"ahler group.
\end{proof}

\subsection{The non-K\"ahler lattices} In this subsection, we show that any torsion-free cocompact arithmetic lattice $G_2$ of products of $\operatorname{Spin}(n,1)$ ($n\geq 3$) is non-K\"ahler. We first give some necessary concepts and tools used in the discussion on the non-K\"ahlerness of $G_2$. Recall that a \textit{hyperbolic 2-orbifold} is a quotient $\Sigma = \Lambda\backslash\mathbb{H}^2$. Here $\Lambda<\mathrm{PSL}(2,\mathbb{R})\cong \operatorname{Isom}(\mathbb{H}^2)^\circ$ is discrete and cocompact; in this case, $\Lambda\cong\pi_1^{\mathrm{orb}}(\Sigma)$. We have the following important theorem due to Delzant-Py.
\begin{theorem}[{\cite[Theorem 1, Corollary 1]{delzant2012kahler}}]\label{thm:DP}

Let $\Gamma=\pi_1(M)$ be a K\"ahler group corresponding to a compact K\"ahler manifold $M$. If for some $n\geq 3$, a homomorphism $\rho: \Gamma\rightarrow\operatorname{Isom}(\mathbb H^n)$ has Zariski-dense image, then there exist a hyperbolic $2$-orbifold $\Sigma$, such that $\rho$ factors through the map $f_*:\pi_1(M)\longrightarrow\pi_1^{\mathrm{orb}}(\Sigma)$ induced by a holomorphic fibration $f:M\to\Sigma$.
\end{theorem}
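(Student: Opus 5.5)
The plan is to follow the harmonic-map approach of Carlson--Toledo; Delzant--Py extend it to infinite-dimensional hyperbolic spaces, but the finite-dimensional case $n\ge3$ is all that is needed here. Write $\widetilde M$ for the universal cover of $M$ and let $\Gamma=\pi_1(M)$ act on it by deck transformations.

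\textbf{Step 1: an equivariant harmonic map.} Because $\rho(\Gamma)$ is Zariski dense in $\operatorname{Isom}(\mathbb H^n)$ with $n\ge 3$, it fixes no point of $\partial\mathbb H^n$ and preserves no proper totally geodesic subspace. The action is therefore reductive. Corlette's theorem (or Labourie's) then gives a $\rho$-equivariant harmonic map $u:\widetilde M\to\mathbb H^n$ of finite energy.

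\textbf{Step 2: pluriharmonicity and a rank bound.} Since $M$ is compact K\"ahler and $\mathbb H^n$ has nonpositive Hermitian sectional curvature, Sampson's Bochner formula shows that $u$ is pluriharmonic. It also shows that the image of $du^{1,0}$ at every point spans an abelian subspace of $\mathfrak p^{\mathbb C}$ on which the curvature vanishes. For the rank-one space $\mathbb H^n$, this forces $\operatorname{rank}_{\mathbb R} du\le 2$ everywhere.

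\textbf{Step 3: excluding rank at most one.} If $\operatorname{rank} du\le1$ everywhere, then pluriharmonicity makes the image of $u$ lie in a geodesic. That geodesic would be $\rho(\Gamma)$-invariant, which contradicts Zariski density. So $\operatorname{rank} du=2$ on a dense open set.

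\textbf{Step 4: the holomorphic foliation.} On this open set, the fibres of $u$ are complex hypersurfaces, namely the leaves of the kernel of $du^{1,0}$. They define a holomorphic foliation descending to $M$. Moreover, a $\rho$-twisted holomorphic $1$-form $\partial u$ gives rise to holomorphic quadratic differentials whose kernels define the foliation.

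\textbf{Step 5: from foliation to fibration.} This is the main obstacle. The natural first attempt is to imitate the Siu and Carlson--Toledo factorization argument. One shows that the leaves are compact, using the holomorphic forms pulled back from the local complex structure on the image, in the manner of the Castelnuovo--de Franchis argument. One then takes the Stein factorization, which yields a holomorphic map $f:M\to\Sigma$ with connected fibres onto a compact Riemann surface. With orbifold points placed at the multiple fibres, $\Sigma$ becomes a hyperbolic $2$-orbifold.

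\textbf{Step 6: factoring $\rho$.} Since $u$ is constant on the fibres of $f$, the representation $\rho$ is trivial on the image of $\pi_1$ of a generic fibre. Hence $\rho$ factors through $f_*:\pi_1(M)\to\pi_1^{\mathrm{orb}}(\Sigma)$. The orbifold structure is exactly what absorbs the multiple fibres in this factorization.

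If the compactness argument in Step 5 resists, the fallback is Delzant--Py's own route. There one studies the action on the tree of level sets, or uses an $L^2$-cohomology/Gromov--Schoen style argument, to show that the leaf space is a surface.
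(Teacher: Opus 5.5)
\paragraph{The gap is Step 5.} The paper does not prove this statement; it quotes it from Delzant--Py and uses it as a black box. Your Steps 1--4 and 6 follow the standard Carlson--Toledo scheme and are fine in outline. Step 5, however, is a genuine gap, and it is precisely the content of the theorem.

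The finite-dimensional case is not just Carlson--Toledo. Their setting is harmonic maps to locally symmetric spaces, i.e.\ discrete image, where the fibres of the induced map $M\to\rho(\Gamma)\backslash\mathbb H^n$ are compact for free. For a Zariski-dense $\rho$ with non-discrete image, one has $\overline{\rho(\Gamma)}\supseteq\operatorname{Isom}(\mathbb H^n)^\circ$ and there is no such quotient. Nothing in your argument prevents the leaves of $\ker du$ from being dense in $M$.

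The mechanism you propose does not apply:
\begin{itemize}
\item Castelnuovo--de Franchis needs two linearly independent \emph{global} holomorphic $1$-forms $\omega_1,\omega_2$ on $M$ with $\omega_1\wedge\omega_2=0$. But $\partial u$ is a holomorphic section of $\Omega^1_M\otimes E_\rho$, where $E_\rho$ is the flat bundle with Zariski-dense monodromy, so it yields no untwisted forms.
\item The forms coming from the complex structure of a local transversal are only local and are not holonomy-invariant.
\item Even a global closed holomorphic $1$-form defining the foliation would not force compact leaves; linear foliations on complex tori show this. You can obtain such a form on a double cover when $\langle\partial u,\partial u\rangle\not\equiv 0$, though that differential may vanish identically.
\item The fallback is not an argument either: the leaf space here is locally a Riemann surface, not a tree.
\end{itemize}

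\paragraph{Why an essential idea is missing.} Your argument never uses $n\ge 3$. Take $n=2$ and $M=\Lambda\backslash(\mathbb H^2\times\mathbb H^2)$, where $\Lambda<\mathrm{PSL}(2,\mathbb R)^2$ is a torsion-free irreducible cocompact lattice, and let $\rho$ be the first projection.
\begin{itemize}
\item $\rho$ is Zariski dense in $\operatorname{Isom}(\mathbb H^2)$.
\item An equivariant harmonic map is the holomorphic first projection, of rank $2$ everywhere, so Steps 1--4 go through verbatim.
\item Yet every leaf, the image of $\{z\}\times\mathbb H^2$, is dense in $M$, because $\rho(\Lambda)$ is dense in $\mathrm{PSL}(2,\mathbb R)$.
\item $\rho$ cannot factor through any $f_*$. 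It is injective by the same argument as in Lemma~\ref{lem:dense_and_inj}. So $\Lambda$ would embed in $\pi_1^{\mathrm{orb}}(\Sigma)$, forcing $\mathrm{cd}(\Lambda)\le 2$ instead of $4$.
\end{itemize}

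Any correct Step 5 must therefore use the following fact. For $n\ge 3$ and Zariski-dense $\rho$, the local transverse harmonic surfaces $\phi\colon D\to\mathbb H^n$ through which $u$ factors do not lie in a totally geodesic $\mathbb H^2$. That fact has to be used to rule out non-closed leaves in the non-discrete case. Supplying this argument is exactly what Delzant--Py do. As written, your proposal reduces the theorem to its hardest step rather than proving it.
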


We also need the following simple lemma on the cohomological dimension for discrete subgroups of Lie groups (See e.g., \cite[Chapter VIII, Section 9.4]{Brown1982CohomologyOG}).

\begin{lemma}\label{lem:cd-tf-cocompact-lattice-Lie}
Let $\Gamma$ be a torsion-free discrete subgroup of a real Lie group $G$ of non-compact type. Suppose $G$ has finitely many connected components. We have 
$$\mathrm{cd}(\Gamma)\leq \dim_{\mathbb R} X,$$
where $X$ is the symmetric space corresponding to $G$. Moreover, the equality holds if and only if $\Gamma$ is cocompact in $G$.
\end{lemma}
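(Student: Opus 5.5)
The plan is to realize $\Gamma$ as the fundamental group of a closed aspherical manifold, namely the locally symmetric space $M=\Gamma\backslash X$, and to read off the cohomological dimension from it. I will reduce to the connected, semisimple, centerless case, where $X=G/K$ is contractible. First, replace $G$ by its identity component $G^\circ$. Since $G$ has finitely many components, $\Gamma\cap G^\circ$ has finite index in $\Gamma$. For a torsion-free group, Serre's theorem says that passing to a finite-index subgroup does not change the cohomological dimension, so I may assume $G$ is connected.

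Next, let $K<G$ be a maximal compact subgroup and put $X=G/K$. By the Cartan--Iwasawa--Malcev theorem, $X$ is diffeomorphic to a Euclidean space, so it is contractible. The group $G$ acts properly on $X$, and $\Gamma$ is discrete, so $\Gamma$ acts properly discontinuously. Stabilizers are finite, because each is $\Gamma$ intersected with a compact group. Since $\Gamma$ is torsion-free, they are trivial, exactly as in the proof of Proposition~\ref{prop:G1-kahler}. Hence $M=\Gamma\backslash X$ is a manifold of dimension $n=\dim_{\mathbb R}X$ and a $K(\Gamma,1)$. A $K(\Gamma,1)$ manifold of dimension $n$ gives $\mathrm{cd}(\Gamma)\le n$. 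To see this, triangulate $M$ (or use a CW structure) and note that $\check{H}^i$ of an $n$-manifold vanishes above $n$ for every local coefficient system. Equivalently, $C^\ast_c$ of $X$ gives a projective resolution of length $n$.

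For the equality statement, suppose first that $\Gamma$ is cocompact. Then $M$ is a closed aspherical $n$-manifold, and it is orientable after passing to a finite-index subgroup, which does not change $\mathrm{cd}$. Poincaré duality gives $H^n(\Gamma;\mathbb Z)\cong H_0(M;\mathbb Z)=\mathbb Z\neq 0$, so $\mathrm{cd}(\Gamma)=n$. Conversely, suppose $\Gamma$ is not cocompact, so $M$ is a non-compact connected $n$-manifold. I would show that $H^n(M;\mathcal L)=0$ for every local system $\mathcal L$, and more generally for every $\mathbb Z\Gamma$-module. Concretely, a non-compact $n$-manifold deformation retracts onto a CW complex of dimension $\le n-1$. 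I would get this via a proper Morse function with no local maxima, or by the standard handle argument of Brown VIII.8/9. This gives $\mathrm{cd}(\Gamma)\le n-1$.

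The key step I expect to be delicate is this non-compact case. I need a spine of dimension $<n$ that is valid for all coefficient modules, not just trivial ones. For $\Gamma$ a lattice, I could use Borel--Serre instead, since it gives $\mathrm{cd}=n-\mathrm{rank}_{\mathbb Q}$. However, the lemma concerns arbitrary discrete $\Gamma$, so I would rely on the general fact, as in Brown's treatment, that an open $n$-manifold has the homotopy type of an $(n-1)$-complex.
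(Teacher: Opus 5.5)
Your proposal is correct and follows the paper's route: you realize $\Gamma\backslash X$ as an aspherical $n$-manifold that is a $K(\Gamma,1)$ (the paper gets $X\cong\mathbb{R}^m$ from the manifold splitting theorem), and then apply the criterion that an aspherical $n$-manifold has $\mathrm{cd}\le n$, with equality iff it is closed. The paper simply cites Brown, Ch.~VIII, Prop.~8.1(a) for that criterion, whereas you sketch its proof (Poincaré duality in the closed case, an $(n-1)$-dimensional spine in the open case); your preliminary reduction to $G^\circ$ via Serre's theorem is harmless but unnecessary, since $G/K$ is already Euclidean when $G$ has finitely many components.
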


\begin{proof}
For the maximal compact subgroup $K$ of $G$, by the Second Manifold Splitting Theorem \cite[Theorem 14.3.11]{Hilgert-Neeb12}, we have $X\coloneqq G/K$ is diffeomorphic to some $\mathbb{R}^m$, where $m=\dim_{\mathbb R}G-\dim_{\mathbb R}K$. Therefore, $X/\Gamma$ is a Eilenberg–MacLane space $K(\Gamma,1)$. Then the lemma follows from \cite[Chapter VIII, Proposition 8.1-(a)]{Brown1982CohomologyOG}, which says that for any $d$-dimentional aspherical manifold $Y$, $\mathrm{cd}(\pi_1(Y))\le d$, with equality if and only if $Y$ is closed.
\end{proof}

\begin{proposition}\label{prop:G2-not-kahler}
Let $G_2$ be a torsion-free, irreducible and cocompact lattice of $\prod_{i=1}^k\operatorname{Spin}(n_i,1)^{r_i}$ for even integers $n_i\geq 4$ and integers $r_i\geq 1$. Then $G_2$ is not a K\"ahler group.
\end{proposition}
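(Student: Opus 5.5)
Suppose, for contradiction, that $G_2=\pi_1(M)$ for a compact K\"ahler manifold $M$. Write $L=\prod_{i=1}^k\operatorname{Spin}(n_i,1)^{r_i}$ and pick one simple factor $\operatorname{Spin}(n_1,1)$. Compose the inclusion $G_2\hookrightarrow L$ with the projection to this factor and with the covering $\operatorname{Spin}(n_1,1)\to\operatorname{SO}_0(n_1,1)\cong\operatorname{Isom}(\mathbb H^{n_1})^\circ$. This gives a homomorphism $\rho:G_2\to\operatorname{Isom}(\mathbb H^{n_1})$ with $n_1\ge 4\ge 3$.

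\textbf{Step 1: Zariski density.} If $L$ has a single simple factor, $\rho$ is just the lattice embedding, and Borel density gives Zariski density of $\rho(G_2)$ in $\operatorname{SO}(n_1,1)$. Otherwise, irreducibility of $G_2$ means its projection to each simple factor is dense in the analytic topology, which again gives Zariski density. If one prefers to avoid this, one can use the arithmetic structure: the projection is the image of $\mathbf G(\mathcal O_k)$ under a real place, and is Zariski dense by Borel density applied to the $\mathbb Q$-group $\operatorname{Res}_{k/\mathbb Q}\mathbf G$.

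\textbf{Step 2: apply Theorem~\ref{thm:DP}.} The theorem gives a hyperbolic $2$-orbifold $\Sigma$ and a holomorphic fibration $f:M\to\Sigma$ such that $\rho=\psi\circ f_*$ for some $\psi:\pi_1^{\mathrm{orb}}(\Sigma)\to\operatorname{Isom}(\mathbb H^{n_1})$. Since $f$ has connected fibers, $f_*$ is surjective. Let $N=\ker f_*$. The group $\Lambda=\pi_1^{\mathrm{orb}}(\Sigma)$ is a cocompact Fuchsian group, so it is virtually a surface group.

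\textbf{Step 3: derive a contradiction.} This is the main obstacle.

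In the single-factor case $\rho$ is injective, so $f_*$ is injective. Hence $G_2\cong\Lambda$, and $G_2$ is virtually a surface group, so $\operatorname{cd}(G_2)=2$ because $G_2$ is torsion-free. On the other hand, Lemma~\ref{lem:cd-tf-cocompact-lattice-Lie} gives $\operatorname{cd}(G_2)=\dim X=n_1\ge 4$, a contradiction.

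In the product case $\rho$ need not be injective, and I plan the following argument. The image $\rho(G_2)=\psi(\Lambda)$ is a quotient of a virtually surface group and is Zariski dense in $\operatorname{SO}(n_1,1)$. Also $N\subseteq\ker\rho=G_2\cap\bigl(\text{other factors}\bigr)$. By irreducibility, this intersection is trivial: it is a discrete normal subgroup of $G_2$ lying in a complementary factor, and hence finite, and so trivial because $G_2$ is torsion-free. Since $N\subseteq\ker\rho$, we get $N=1$, so again $G_2\cong\Lambda$, and the same cohomological-dimension contradiction applies: $\operatorname{cd}(G_2)=\sum r_in_i\ge 4>2$.

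The key point I must check carefully is that $\ker\rho$ is trivial for an irreducible lattice. I expect this to follow from the projection of $G_2$ to each factor having dense image together with Margulis normal subgroup considerations; alternatively, in the arithmetic setting, from the injectivity of each archimedean embedding $\mathbf G(k)\hookrightarrow\mathbf G(k_v)$. I would use the arithmetic version, since the lattices here arise that way. The evenness of $n_i$ is not needed in this argument, except possibly to ensure $\dim X\ge 4$.
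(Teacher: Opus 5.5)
Your strategy is the paper's: build a Zariski-dense $\rho\colon G_2\to\operatorname{Isom}(\mathbb H^{n_1})$, apply Theorem~\ref{thm:DP}, use injectivity of $\rho$ to embed $G_2$ in $\pi_1^{\mathrm{orb}}(\Sigma)$, and contradict $\operatorname{cd}(G_2)=\sum r_in_i\ge4$ from Lemma~\ref{lem:cd-tf-cocompact-lattice-Lie}. The substantive step you leave open, triviality of $\ker\rho$ in the product case, is exactly the second half of the paper's Lemma~\ref{lem:dense_and_inj}. Your sentence ``a discrete normal subgroup of $G_2$ lying in a complementary factor, and hence finite'' is an assertion, not yet an argument.

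There are also some small imprecisions:
\begin{itemize}
\item Because of the spin covering, $\ker\rho=G_2\cap(\{\pm1\}\times F)$, where $F$ is the product of the other factors, not $G_2\cap F$. This is harmless, since $G_2\cap(\{1\}\times F)$ has index at most $2$ in it, but it should be said.
\item In the single-factor case, injectivity of $\rho$ holds because the covering kernel is $\{\pm1\}$ and $G_2$ is torsion-free. You did not state this.
\item Once $\ker\rho=1$, the product case is literally the single-factor case. The remark that ``$\rho$ need not be injective'' and the bookkeeping with $N=\ker f_*$ are unnecessary.
\end{itemize}

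The paper closes the gap elementarily. Project $T=G_2\cap(\{1\}\times F)$ to $F$; the image is discrete, hence closed. It is normalized by $\operatorname{pr}_F(G_2)$, which is dense in $F$ by irreducibility. What is needed here is density in the whole complementary product $F$, not just in each simple factor as you wrote. So the image is normal in the connected group $F$, hence central, and the centre is trivial after passing to $\operatorname{SO}_0$ factors (finite for Spin factors). This gives $T=1$.

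Your Margulis suggestion also works and is shorter, at the cost of heavier machinery. In the product case $\operatorname{rank}_{\mathbb R}L=\sum r_i\ge2$, and $\ker\rho$ is normal in the irreducible lattice $G_2$. It has infinite index because $\rho(G_2)$ is Zariski dense, hence infinite. By the normal subgroup theorem $\ker\rho$ is central, hence finite, hence trivial since $G_2$ is torsion-free. This also absorbs the $\{\pm1\}$ issue.

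The arithmetic route you say you would prefer only covers lattices literally contained in $\mathbf G(k)$, as in the paper's construction. For the proposition as stated you would also need Margulis arithmeticity, plus the observation that $\ker\rho$ meets a finite-index subgroup of $G_2$ in a finite group, so I would not choose it.

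Your remark that evenness of $n_i$ is inessential is correct. The paper uses it only to identify $\operatorname{SO}(n_i,1)$ with $\operatorname{Isom}(\mathbb H^{n_i})$ and to get trivial centres, and both points survive for odd $n_i$ via $\operatorname{SO}_0(n_i,1)$.
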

We make the following observation before the proof of Proposition \ref{prop:G2-not-kahler}. For even $n_i$, the natural projection map
\[
\operatorname{SO}(n_i,1)\longrightarrow 
\operatorname{PO}(n_i,1)\cong \operatorname{Isom}(\mathbb H^{n_i})
\]
is an isomorphism, since $-I_{n_i+1}\notin \operatorname{SO}(n_i,1)$.
Hence the standard double covering induces
\[
\theta_i:\operatorname{Spin}(n_i,1)\longrightarrow
\operatorname{SO}_0(n_i,1)
<\operatorname{Isom}(\mathbb H^{n_i}),
\]
with kernel $\{\pm1\}$. Consider the product homomorphism $\widetilde{\theta}\coloneqq \prod_{i=1}^k\theta_i^{r_i}$.  Since $G_2$ is a torsion-free subgroup of $\prod_{i=1}^k\operatorname{Spin}(n_i,1)^{r_i}$, we have $G_2\cap\ker\widetilde{\theta}=\{1\}$, i.e., $$\widetilde{\theta}|_{G_2}\colon G_2\to \prod_{i=1}^k\SO_0(n_i,1)^{r_i}$$ is injective. Since $G_2$ is a cocompact subgroup of $\prod_{i=1}^k\operatorname{Spin}(n_i,1)^{r_i}$ and each homomorphism $\operatorname{Spin}(n_i,1)\to \SO_0(n_i,1)$ is a double cover, one can view $G_2\cong \widetilde{\theta}(G_2)$ as a torsion-free cocompact subgroup of $\prod_{i=1}^k\SO_0(n_i,1)^{r_i}$. Since $\tilde{\theta}$ is a quotient by a finite normal subgroup, $G_2$ is also irreducible in $\prod_{i=1}^k\SO_0(n_i,1)^{r_i}$. We require the following lemma.
\begin{lemma}\label{lem:dense_and_inj}
    For any factor map $\vartheta_j: \prod_{i=1}^k\SO_0(n_i,1)^{r_i}\to \SO_0(n_j,1)$, $j\in\{1,..,k\}$, $\vartheta_j(G_2)<\SO_0(n_j,1)$ is Zariski dense, and the restriction map $\widetilde{\vartheta_j}: G_2\to \SO_0(n_j,1)$ is injective.
\end{lemma}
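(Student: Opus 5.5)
Write $H:=\prod_{i=1}^k\SO_0(n_i,1)^{r_i}$ and regard $G_2\cong\widetilde\theta(G_2)$ as a torsion-free, irreducible, cocompact lattice in $H$. The plan has two parts. Zariski density will follow from the Borel density theorem applied to the lattice $G_2<H$. Injectivity will follow from the standard fact that, for an irreducible lattice, the projection to any single simple factor has trivial kernel. Throughout, ``factor'' means one of the $\sum_i r_i$ simple factors $\SO_0(n_j,1)$ of $H$; the argument is the same for each of them.

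\emph{Zariski density.} Each $\SO_0(n_i,1)$ with $n_i\ge 4$ is a connected simple real Lie group without compact factors. It is the identity component of the real points of the absolutely almost simple $\mathbb R$-group $\SO(n_i,1)$. By Borel density, a lattice in $H$ is Zariski dense in the ambient real algebraic group $\prod_i\SO(n_i,1)^{r_i}$. The projection $\vartheta_j$ is the restriction of a surjective morphism of algebraic groups. The image of a Zariski dense set under such a morphism is Zariski dense in the image. Hence $\vartheta_j(G_2)$ is Zariski dense in $\SO(n_j,1)$, and therefore in $\SO_0(n_j,1)$ in the sense required by Theorem~\ref{thm:DP}. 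Lattices in $H$ can equally be phrased via $\operatorname{Isom}(\mathbb H^{n_j})$, since $\SO_0(n_j,1)$ has finite index there. If only one factor is present, injectivity is trivial and we are done.

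\emph{Injectivity.} Let $N=\ker(\vartheta_j|_{G_2})$. Then $N$ is a normal subgroup of $G_2$ contained in $H_j':=\prod_{(i,l)\ne j}\SO_0(n_i,1)$, the product of the remaining factors. I would argue in three steps.

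\begin{enumerate}
\item Consider the projection $\vartheta_j'$ of $H$ onto $H_j'$. By Borel density, as above, $\vartheta_j'(G_2)$ is Zariski dense in $H_j'$.
\item The group $N$ is discrete in $H_j'$, because $N\subset G_2$ and $G_2$ is discrete. It is normalized by the Zariski dense subgroup $\vartheta_j'(G_2)$, which acts on $N$ by conjugation through $\vartheta_j'$.
\item The Zariski closure of $N$ is therefore a normal algebraic subgroup of $H_j'$.
\end{enumerate}

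From here the standard lemma applies: a discrete subgroup of a semisimple group without compact factors that is normalized by a Zariski dense subgroup lies in the center (see \cite[Section~7.2]{lubotzky2003subgroup} or Margulis' normal subgroup framework). Concretely, the identity component of the Zariski closure of $N$ acts trivially on $N$ by discreteness, so $N$ is virtually central. A virtually central subgroup of a group with finite center is finite. Since $G_2$ is torsion-free, $N=1$.

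Alternatively, I would invoke the definition of irreducibility directly. If $N\neq1$, then $\overline{\vartheta_j'(G_2)}$ fails to be dense in a way that produces a commensurable product decomposition of $G_2$. This contradicts irreducibility, since for an irreducible lattice the projection to every proper sub-product is dense.

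\emph{Main obstacle.} The delicate step is the passage from ``discrete and normalized by a Zariski dense group'' to ``central''. The cleanest route is as follows. If $N$ were infinite, then the closure $\overline{\vartheta_j(G_2)}$ would contain, by irreducibility, the whole factor, since the projection to each factor is dense. Then $N\cdot\ker\vartheta_j$-type arguments force $N$ to have non-discrete image. I would first try the Zariski-closure argument above, since it only uses Borel density and the finiteness of the center of $\SO_0(n_i,1)$.
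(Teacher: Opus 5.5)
Your Zariski-density argument is correct, and it differs slightly from the paper's. The paper gets topological density of $\vartheta_j(G_2)$ from irreducibility, using Borel density only in the single-factor case. You instead apply Borel density to $G_2<H$ and push it forward along the projection, which works even without irreducibility. The injectivity part, however, has a genuine gap. The ``standard lemma'' you invoke (a discrete subgroup normalized by a Zariski dense subgroup is central) is false. Your steps 1--3 never use irreducibility, so they cannot suffice. Take a reducible lattice $\Gamma_1\times\Gamma_2<H_1\times H_2$. The kernel of the second projection is $N=\Gamma_1\times\{1\}$. It is discrete and normalized by $\mathrm{pr}_1(\Gamma_1\times\Gamma_2)=\Gamma_1$, which is Zariski dense in $H_1$ by Borel density. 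Yet $N$ is infinite and not central. The failing step is ``the identity component of the Zariski closure of $N$ acts trivially on $N$ by discreteness.'' The discreteness trick only works for a group that is connected in the Lie topology and actually normalizes $N$. Neither the Zariski closure of $N$ nor that of $\vartheta_j'(G_2)$ need normalize $N$, because the normalizer of a subgroup that is not Zariski closed need not be Zariski closed. In the example, the Zariski closure of $N$ contains all of $H_1$, which does not normalize $\Gamma_1$.

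The missing idea is to use irreducibility to get density of $\vartheta_j'(G_2)$ in $H_j'$ in the Lie topology. This is the projection to the complementary product, not to the factor $\SO_0(n_j,1)$ as in your ``main obstacle'' paragraph. Since $N$ is discrete, it is closed in $H_j'$. Its normalizer is therefore a closed subgroup of $H_j'$ containing the dense subgroup $\vartheta_j'(G_2)$, so it equals $H_j'$. The connected Lie group $H_j'$ now normalizes the discrete group $N$, hence centralizes it: for $n\in N$, the map $h\mapsto hnh^{-1}$ is a continuous map from a connected space to a discrete one, so it is constant. Thus $N\subset Z(H_j')$, which is finite (indeed trivial here), and torsion-freeness of $G_2$ gives $N=1$. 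This is exactly the paper's argument. Your ``Alternatively'' paragraph points toward irreducibility but never carries out this step.
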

\begin{proof}
For our first claim, if $k=1$ and $r_1 = 1$, $\vartheta_j$ is the identity map, and the conclusion comes from the Borel density theorem \cite{Borel1960DensityPF}. In other cases, the irreducibility of $G_2$ implies the density of every $\vartheta_j(G_2)$ under the usual topology, and the Zariski density follows.

For our second claim, the conclusion is instant when $k=1$ and $r_1 = 1$. In other cases, we denote by $F$ the complement factor of the target factor $\SO_0(n_j,1)$, so $\prod_{i=1}^k\SO_0(n_i,1)^{r_i}=F\times \SO_0(n_j,1)$. It suffices to show that $T\coloneqq \ker (\widetilde{\vartheta_j}) = G_2\cap (F\times \{1\})$ is trivial. In fact, consider the map 
$$\widetilde{\vartheta_j}'\colon G_2\hookrightarrow \prod_{i=1}^k\SO_0(n_i,1)^{r_i}\to F$$ 
where the second map is the natural projection onto the $F$ factor. The subgroup $\widetilde\vartheta_j'(T)$ is normal in
$\widetilde\vartheta_j'(G_2)$. Since $T$ is discrete and
$\widetilde\vartheta_j'|_T$ is injective, $\widetilde\vartheta_j'(T)$ is
discrete, hence closed in $F$. Since $G_2$ is irreducible, $\widetilde\vartheta_j'(G_2)$ is
dense in $F$, it follows that $\widetilde\vartheta_j'(T)$ is normal in
$F$. By \cite[Proposition 1.93(d)]{knapp1996lie}, $\widetilde{\vartheta_j}'(T)$ is contained in the center of $F$, which is trivial. We deduce that $T=\{1\}$ from these facts.
\end{proof}

\begin{proof}[Proof of Proposition \ref{prop:G2-not-kahler}]
Suppose by contradiction that $G_2$ is a K\"ahler group. Note $n_i$ is even, so there is a homomorphism induced from Lemma \ref{lem:dense_and_inj}:
$$\rho\colon G_2\xrightarrow{\widetilde\vartheta_i} \SO_0(n_i,1)\hookrightarrow \SO(n_i,1)\cong\operatorname{Isom}(\mathbb H^{n_i}),$$ 
so that the image of $\rho$ is Zariski dense since the Zariski closure $\overline{\SO_0(n_i,1)}=\SO(n_i,1)$. Implied by Theorem \ref{thm:DP}, $\rho$ factors through a map $G_2\to \pi_1^{\mathrm{orb}}(\Sigma)$. The injectivity of $\rho$ implies that $G_2<\pi_1^{\mathrm{orb}}(\Sigma)$, hence $\mathrm{cd}(G_2)\leq 2$.

On the other hand, as a cocompact torsion-free discrete lattice in $\prod_{i=1}^k\SO_0(n_i,1)^{r_i}$, Lemma \ref{lem:cd-tf-cocompact-lattice-Lie} implies
$\mathrm{cd}(G_2) = \dim_\mathbb{R}\bigl(\prod_{i=1}^k(\mathbb{H}^{n_i})^{r_i}\bigr) = \sum_{i=1}^k r_in_i \geq 4$,
a contradiction. Hence $G_2$ is not a K\"ahler group.
\end{proof}

\begin{proposition}\label{FiniPreseResidFini}
$G_1$ and $G_2$ are finitely presented and residually finite.
\end{proposition}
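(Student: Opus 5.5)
Both $G_1$ and $G_2$ are torsion-free cocompact lattices in connected semisimple real Lie groups with finitely many components, namely $\prod_i\operatorname{Spin}(n_i,2)^{r_i}$ and $\prod_i\operatorname{Spin}(n_i,1)^{r_i}$. So I will prove the two properties for such a lattice $\Gamma<G$ in general, and treat finite presentation and residual finiteness separately.

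\emph{Finite presentation.} As in the proofs of Proposition~\ref{prop:G1-kahler} and Lemma~\ref{lem:cd-tf-cocompact-lattice-Lie}, $\Gamma$ acts freely and properly discontinuously on the symmetric space $X=G/K$. By the Second Manifold Splitting Theorem, $X$ is diffeomorphic to $\mathbb R^m$. Cocompactness makes $M=\Gamma\backslash X$ a closed aspherical smooth manifold with $\pi_1(M)\cong\Gamma$. A closed smooth manifold admits a finite triangulation, hence has the homotopy type of a finite CW complex. The fundamental group of a finite CW complex is read off from its $2$-skeleton, so it is finitely presented. This step is routine.

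\emph{Residual finiteness.} I want to use Mal'cev's theorem, which says finitely generated linear groups are residually finite. The groups $\operatorname{Spin}(n,1)$ and $\operatorname{Spin}(n,2)$ need not be linear for every $n$ in their usual presentation, so I will avoid depending on a faithful representation of $G$. Instead I will use the arithmetic origin of the lattices in the actual construction: $G_1$ and $G_2$ are produced, via Lemma~\ref{lem:CSP} and Corollary~\ref{cor:CSP}, as isomorphic images of torsion-free arithmetic subgroups $\widetilde\Gamma<\mathbf G(k)\subset \mathrm{GL}_N(k)$. These are finitely generated subgroups of $\mathrm{GL}_N(\mathbb C)$, so Mal'cev's theorem applies; this was already invoked in the proof of Lemma~\ref{lem:CSP}.

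For a more intrinsic argument, I would pass through the finite-kernel homomorphism $\widetilde\theta$ to $\prod\operatorname{SO}_0(n_i,1)^{r_i}$ (respectively to $\prod\operatorname{SO}_0(n_i,2)^{r_i}$), which are linear matrix groups. Torsion-freeness makes $\widetilde\theta$ injective on the lattice, exactly as in the observation preceding Lemma~\ref{lem:dense_and_inj}. The lattice is then a finitely generated linear group, and Mal'cev's theorem applies.

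\emph{Main obstacle.} The only delicate point is linearity in the second argument, specifically making sure the kernel of $\widetilde\theta$ is central and finite so that torsion-freeness gives injectivity. For $\operatorname{Spin}(n_i,2)$ the relevant map is $\operatorname{Spin}(n_i,2)\to\operatorname{SO}_0(n_i,2)$, whose kernel is $\{\pm1\}$. So the same argument as in the observation before Lemma~\ref{lem:dense_and_inj} works verbatim. Finite generation follows from finite presentation.
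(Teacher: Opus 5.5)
Your proposal is correct and is essentially the paper's proof: finite presentation comes from the closed aspherical manifold $\Gamma\backslash X$, and residual finiteness comes from Mal'cev's theorem once the lattice is exhibited as a finitely generated linear group. Your injection via $\widetilde\theta$ into $\prod\mathrm{SO}_0$, or the arithmetic embedding into $\mathbf G(k)\subset\mathrm{GL}_N(k)$, just makes that linearity explicit. One minor correction to an aside: $\operatorname{Spin}(n,1)$ and $\operatorname{Spin}(n,2)$, being real points of algebraic spin groups, are themselves linear (they sit inside the even Clifford algebra); non-linearity only occurs for covers such as the universal cover of $\mathrm{SO}_0(n,2)$, so your workaround is harmless but unnecessary.
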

\begin{proof}
Since $G_1$ and $G_2$ are torsion-free cocompact lattices, they are the fundamental groups of the compact manifolds. Hence, both groups are finitely presented. Being finitely generated linear groups over a field of characteristic zero, they are residually finite by Mal'cev's theorem.
\end{proof}

\begin{theorem}\label{mainThem2}
    Let $n\ge2$ and $r\ge2$ be integers such that either $n$ is odd or $r$ is even. There exist finitely presented, residually finite, torsion-free cocompact arithmetic lattices $G_1< \operatorname{Spin}(2n-1,2)^{r}, G_2< \operatorname{Spin}(2n,1)^{r}$,
with isomorphic profinite completions,
$\widehat{G}_1\cong\widehat{G}_2$,
such that $G_1$ is K\"ahler and $G_2$ is non-K\"ahler.
\end{theorem}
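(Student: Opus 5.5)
The plan is to realize both lattices as arithmetic subgroups of spin groups of quadratic forms of dimension $d=2n+1$ over a totally real field, and to invoke Theorem~\ref{thm:non_rigidity}(1) with $r_c=0$ and $r_-=r$. Take $(m,n_{\rm form})=(2n-1,2)$ and $(m',n')=(2n,1)$, so $m+n_{\rm form}=m'+n'=2n+1=d\geq 5$ because $n\ge2$.

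The first step is to check the hypotheses of Theorem~\ref{thm:non_rigidity}(1). The rank condition reads $r_-\min(m,n_{\rm form},m',n')=r\cdot 1\geq 2$, which holds since $r\ge2$. For the divisibility condition, compare $m-n'=2n-2=2(n-1)$. If $n$ is odd, then $4\mid 2(n-1)$, so the first alternative $4\mid(m-n')$ applies. If $r$ is even, then $2\mid(m-n')$ together with $2\mid r_-=r$ gives the second alternative.

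With these hypotheses in place, Theorem~\ref{thm:non_rigidity}(1) produces irreducible torsion-free cocompact arithmetic lattices $G_1<\operatorname{Spin}(2n-1,2)^r$ and $G_2<\operatorname{Spin}(2n,1)^r$ with $\widehat G_1\cong\widehat G_2$. Here one uses $\operatorname{Spin}(m',n')\cong\operatorname{Spin}(n',m')$ to match signatures; this relabeling is already handled in the proof of that theorem.

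The conclusions then follow from earlier results:
\begin{itemize}
\item Finite presentation and residual finiteness come from Proposition~\ref{FiniPreseResidFini}.
\item Kählerness of $G_1$ follows from Proposition~\ref{prop:G1-kahler} with $k=1$ and $n_1=2n-1\ge3$.
\item Non-Kählerness of $G_2$ follows from Proposition~\ref{prop:G2-not-kahler} with $n_1=2n$, which is even and at least $4$, and with $G_2$ irreducible and cocompact.
\end{itemize}

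I expect the only delicate point to be confirming that the lattices from Theorem~\ref{thm:non_rigidity} meet exactly the hypotheses of Propositions~\ref{prop:G1-kahler} and \ref{prop:G2-not-kahler}: torsion-free, cocompact, and irreducible in the full product after the compact factors are projected away. Corollary~\ref{cor:CSP} and Proposition~\ref{prop:Spin} give this directly, since the auxiliary $r_+\ge1$ compact places ensure cocompactness.

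To get infinitely many pairs, as Theorem~\ref{thm:main} requires, one varies $n$ and $r$ (for instance over all odd $n\geq3$). The resulting $G_1$ are pairwise non-isomorphic because their cohomological dimensions, computed via Lemma~\ref{lem:cd-tf-cocompact-lattice-Lie}, are the distinct values $2r(2n-1)$.
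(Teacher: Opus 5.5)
Your proposal is correct and follows essentially the same route as the paper's proof. You apply Theorem~\ref{thm:non_rigidity}(1) with $r_-=r$, $r_c=r_s=0$, check the congruence via $m-n'=2n-2$, and then invoke Propositions~\ref{prop:G1-kahler}, \ref{prop:G2-not-kahler} and \ref{FiniPreseResidFini}. Your closing remark on infinitely many pairs goes beyond this statement but is harmless: with $r$ fixed and $n$ odd, the values $2r(2n-1)$ are indeed distinct.
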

\begin{proof} 
We first show $\widehat{G}_1\cong\widehat{G}_2$ by applying Theorem \ref{thm:non_rigidity}-(1). Here we have the signature pairs $(2n-1,2)$ and $(2n,1)$, and with $r_-=r$ and $r_c=r_s=0$. Note $d=2n+1\ge5$ and $r\min(2n-1,2,2n,1)=r\ge2$, so the rank condition is satisfied. Now if $n$ is odd, then $4\mid\bigl(2n-1)-1$; if $r$ is even, then
$2\mid\bigl(2n-1)-1,$ hence the congruence conditions hold. So it follows from Theorem \ref{thm:non_rigidity}-(1), there exist torsion-free cocompact arithmetic lattices $G_1<\operatorname{Spin}(2n-1,2)^r,G_2<\operatorname{Spin}(2n,1)^r$ with $\widehat{G}_1\cong\widehat{G}_2$. 

Now, directly from Proposition \ref{prop:G1-kahler} and Proposition \ref{prop:G2-not-kahler}, we can see that $G_1$ is K\"ahler and $G_2$ is non-K\"ahler. Moreover, by Proposition \ref{FiniPreseResidFini}, $G_1,G_2$ are finitely presented and residually finite.
\end{proof}

\noindent \textbf{AI Declaration} The mathematical ideas and proofs of this paper are from the authors. AI was only used for typesetting and language clean-up. We take the responsibility for the content.

\appendix
\section{Overview of the Noncompact Real Forms of Lie Groups of Dynkin types \texorpdfstring{$A_l$}{Al}, \texorpdfstring{$B_l$}{Bl}, \texorpdfstring{$C_l$}{Cl}, and \texorpdfstring{$D_l$}{Dl}}
\label{appendix:a}
To demonstrate that Theorem \ref{thm:non_rigidity}, Proposition \ref{prop:spin_d}, Proposition \ref{prop:slh} and Theorem \ref{thm:rigidity} fully characterizes the profinite rigidity for lattices in real forms of Lie groups of types $A_l$, $B_l$, $C_l$, and $D_l$ of rank $\geq 2$, we summarize the groups in the following table, and provide the relevant information of them. The classification and the symmetric space dimensions are standard; for the inner and outer forms of each Lie group, see the description of the classical absolutely simple groups in \cite{tits1966classification}. Note in the Table below, $\ell$ denotes the absolute rank, that is, the index in the absolute Dynkin type; it should not be confused with the real rank. For a family parametrized by a signature $(m,n)$, we write $d=m+n$.

\begin{table}[H]
\centering
\begin{tabular}{c | c | c | c | c}
\toprule
Type & Cartan type & Noncompact real form & 
$\dim(G/K)$ & Inner to compact form? \\
\midrule

\multirow{3}{*}{$A_l$}
& AI
& $\mathrm{SL}({l+1},\mathbb{R})$
& $\dfrac{l(l+3)}{2}$
& No \\
\cmidrule(lr){2-5}

& AII
& $\mathrm{SL}(\dfrac{l+1}{2},\mathbb{H})$
& $\dfrac{(l-1)(l+2)}{2}$
& No \\
\cmidrule(lr){2-5}

& AIII
& $\mathrm{SU}(m,n)$,\quad $m+n=l+1$
& $2mn$
& Yes \\

\midrule

$B_l$
& BI
& $\mathrm{Spin}(m,n)$,\quad $m+n=2l+1$
& $mn$
& Yes \\

\midrule

\multirow{2}{*}{$C_l$}
& CI
& $\mathrm{Sp}({2l},\mathbb{R})$
& $l(l+1)$
& Yes \\
\cmidrule(lr){2-5}

& CII
& $\mathrm{Sp}(m,n)$,\quad $m+n=l$
& $4mn$
& Yes \\

\midrule

\multirow{2}{*}{$D_l$}
& DI
& $\mathrm{Spin}(m,n)$,\quad $m+n=2l$
& $mn$
& $\begin{array}{c}
\text{Yes if }m,n\text{ are even},\\
\text{No if }m,n\text{ are odd}
\end{array}$ \\
\cmidrule(lr){2-5}

& DIII
& $\mathrm{Spin}^{*}(2l)$
& $l(l-1)$
& Yes \\

\bottomrule
\end{tabular}
\caption{Noncompact real forms of the classical simple Lie groups and the
dimensions of their associated symmetric spaces.}
\label{tab:classical-real-forms}
\end{table}

\bibliographystyle{alpha}

\bibliography{sample}

@article{wienhard2005bounded,
  author = {Wienhard, A.},
  title = {Bounded cohomology and geometry},
  journal = {arXiv preprint math/0501258},
  year = {2005}
}

@article{guichardet1978cohomologie,
  author = {Guichardet, A. and Wigner, D.},
  title = {Sur la cohomologie r{\'e}elle des groupes de Lie simples r{\'e}els},
  journal = {Ann. Sci. {\'E}c. Norm. Sup{\'e}r.},
  volume = {11},
  number = {2},
  pages = {277--292},
  year = {1978}
}

@article{kammeyer2023profinite,
  author = {Kammeyer, H. and Kionke, S.},
  title = {On the profinite rigidity of lattices in higher rank Lie groups},
  journal = {Math. Proc. Cambridge Philos. Soc.},
  volume = {174},
  number = {2},
  pages = {369--384},
  year = {2023}
}

@book{lam2005introduction,
  author = {Lam, T.-Y.},
  title = {Introduction to quadratic forms over fields},
  volume = {67},
  publisher = {American Mathematical Society},
  year = {2005}
}

@book{knapp1996lie,
  author = {Knapp, A. W.},
  title = {Lie groups beyond an introduction},
  series = {Progress in Mathematics},
  publisher = {Birkh{\"a}user},
  year = {1996}
}

@book{Hilgert-Neeb12,
  author = {Hilgert, J. and Neeb, K.-H.},
  title = {Structure and geometry of Lie groups},
  series = {Springer Monographs in Mathematics},
  publisher = {Springer},
  year = {2012}
}

@book{Brown1982CohomologyOG,
  author = {Brown, K. S.},
  title = {Cohomology of groups},
  series = {Graduate Texts in Mathematics},
  volume = {87},
  publisher = {Springer},
  year = {1982}
}

@article{borel1962arithmetic,
  author = {Borel, A. and Harish-Chandra},
  title = {Arithmetic subgroups of algebraic groups},
  journal = {Ann. of Math. (2)},
  volume = {75},
  number = {3},
  pages = {485--535},
  year = {1962}
}

@article{Gromov1989SurLG,
  author = {Gromov, M.},
  title = {Sur le groupe fondamental d'une vari{\'e}t{\'e} k{\"a}hl{\'e}rienne},
  journal = {C. R. Acad. Sci. Paris S{\'e}r. I Math.},
  volume = {308},
  pages = {67--70},
  year = {1989}
}

@article{Borel1960DensityPF,
  author = {Borel, A.},
  title = {Density properties for certain subgroups of semi-simple groups without compact components},
  journal = {Ann. of Math. (2)},
  volume = {72},
  pages = {179--188},
  year = {1960}
}

@article{hughes2025profinite,
  author = {Hughes, S. and Llosa Isenrich, C. and Py, P. and Stover, M. and Vidussi, S.},
  title = {Profinite rigidity of K{\"a}hler groups: Riemann surfaces and subdirect products},
  journal = {arXiv preprint arXiv:2501.13761},
  year = {2025}
}

@article{grothendieck5geometrie,
  author = {Grothendieck, A.},
  title = {G{\'e}om{\'e}trie formelle et g{\'e}om{\'e}trie alg{\'e}brique},
  journal = {S{\'e}minaire Bourbaki},
  volume = {5},
  pages = {193--220},
  year = {1960}
}

@article{grothendieck1970representations,
  author = {Grothendieck, A.},
  title = {Repr{\'e}sentations lin{\'e}aires et compactification profinie des groupes discrets},
  journal = {Manuscripta Math.},
  volume = {2},
  number = {4},
  pages = {375--396},
  year = {1970}
}

@article{bridson2004grothendieck,
  author = {Bridson, M. R. and Grunewald, F. J.},
  title = {Grothendieck's problems concerning profinite completions and representations of groups},
  journal = {Ann. of Math. (2)},
  volume = {160},
  number = {1},
  pages = {359--373},
  year = {2004}
}

@article{serre1964exemples,
  author = {Serre, J.-P.},
  title = {Exemples de vari{\'e}t{\'e}s projectives conjugu{\'e}es non hom{\'e}omorphes},
  journal = {C. R. Acad. Sci. Paris},
  volume = {258},
  pages = {4194--4196},
  year = {1964}
}

@article{GDZPPN002196646,
  author = {Kneser, M.},
  title = {Normalteiler ganzzahliger Spingruppen},
  journal = {J. Reine Angew. Math.},
  volume = {311/312},
  pages = {191--214},
  year = {1979},
  zbl = {0409.20038}
}

@article{delzant2012kahler,
  author = {Delzant, T. and Py, P.},
  title = {K{\"a}hler groups, real hyperbolic spaces and the Cremona group. With an appendix by {S}erge {C}antat},
  journal = {Compos. Math.},
  volume = {148},
  number = {1},
  pages = {153--184},
  year = {2012}
}

@article{echtler2024bounded,
  author = {Echtler, D. and Kammeyer, H.},
  title = {Bounded cohomology is not a profinite invariant},
  journal = {Canad. Math. Bull.},
  volume = {67},
  number = {2},
  pages = {379--390},
  year = {2024}
}

@inproceedings{tits1966classification,
  author = {Tits, J.},
  title = {Classification of algebraic semisimple groups},
  booktitle = {Proc. Sympos. Pure Math.},
  pages = {33--62},
  year = {1966}
}

@article{kammeyer2020profinite,
  author = {Kammeyer, H. and Kionke, S. and Raimbault, J. and Sauer, R.},
  title = {Profinite invariants of arithmetic groups},
  journal = {Forum Math. Sigma},
  volume = {8},
  pages = {e54},
  year = {2020}
}

@article{kammeyer2021adelic,
  author = {Kammeyer, H. and Kionke, S.},
  title = {Adelic superrigidity and profinitely solitary lattices},
  journal = {Pacific J. Math.},
  volume = {313},
  number = {1},
  pages = {137--158},
  year = {2021}
}

@article{schwermer2011geometric,
  author = {Schwermer, J.},
  title = {Geometric cycles, Albert algebras and related cohomology classes for arithmetic groups},
  journal = {Groups Geom. Dyn.},
  volume = {5},
  number = {2},
  pages = {529--552},
  year = {2011}
}

@article{garibaldi2007groups,
  author = {Garibaldi, S. and Petersson, H. P.},
  title = {Groups of outer type {$E_6$} with trivial Tits algebras},
  journal = {Transform. Groups},
  volume = {12},
  number = {3},
  pages = {443--474},
  year = {2007}
}

@book{jacobson2017exceptional,
  author = {Jacobson, N.},
  title = {Exceptional Lie algebras},
  series = {Lecture Notes in Pure and Applied Mathematics},
  volume = {1},
  publisher = {Marcel Dekker},
  address = {New York},
  year = {1971}
}

@article{tits1965algebres,
  author = {Tits, J.},
  title = {Alg{\`e}bres alternatives, alg{\`e}bres de Jordan et alg{\`e}bres de Lie exceptionnelles. I. Construction},
  journal = {Indag. Math.},
  volume = {28},
  pages = {223--237},
  year = {1966}
}

@article{barton2003magic,
  author = {Barton, C. H. and Sudbery, A.},
  title = {Magic squares and matrix models of Lie algebras},
  journal = {Adv. Math.},
  volume = {180},
  number = {2},
  pages = {596--647},
  year = {2003}
}

@book{jacobson2013lie,
  author = {Jacobson, N.},
  title = {Lie algebras},
  publisher = {Dover Publications},
  address = {New York},
  pages = {ix+331},
  year = {1979},
  note = {Republication of the 1962 original}
}

@incollection{Rap99,
  author    = {Rapinchuk, Andrei S.},
  title     = {The congruence subgroup problem},
  booktitle = {Algebra, K-Theory, Groups, and Education},
  series    = {Contemporary Mathematics},
  volume    = {243},
  pages     = {175--188},
  publisher = {American Mathematical Society},
  address   = {Providence, RI},
  year      = {1999},
  doi       = {10.1090/conm/243/03693}
}

@book{lubotzky2003subgroup,
  title={Subgroup growth},
  author={Lubotzky, Alexander and Segal, Dan},
  volume={212},
  year={2003},
  publisher={Springer}
}

\end{document}